\def\VR{\kern-\arraycolsep\strut\vrule &\kern-\arraycolsep}
\def\vr{\kern-\arraycolsep & \kern-\arraycolsep}
\newtheorem{theorem}{Theorem}
\newtheorem{lemma}[theorem]{Lemma}
\newtheorem{prop}[theorem]{Proposition}
\newtheorem{corollary}[theorem]{Corollary}
\newtheorem{conjecture}[theorem]{Conjecture}
\theoremstyle{definition}
\newtheorem{question}[theorem]{Question}
\newtheorem{rmk}{Remark}
\newenvironment{remark}[1][]{\begin{rmk}[#1]\pushQED{\qed}}{\popQED \end{rmk}}
\newtheorem{ex}{Example}
\newenvironment{example}[1][]{\begin{ex}[#1]\pushQED{\qed}}{\popQED \end{ex}}
\newcommand{\Hom}{\operatorname{Hom}}
\newcommand{\End}{\operatorname{End}}
\newcommand{\Ext}{\operatorname{Ext}}
\newcommand{\rep}{\operatorname{rep}}
\newcommand{\Proj}{\operatorname{Proj}}
\newcommand{\SI}{\operatorname{SI}}
\newcommand{\SL}{\operatorname{SL}}
\newcommand{\GL}{\operatorname{GL}}
\newcommand{\PGL}{\operatorname{PGL}}
\newcommand{\ZZ}{\mathbb Z}
\newcommand{\NN}{\mathbb N}
\newcommand{\PP}{\mathbb P}
\newcommand{\coker}{\operatorname{Coker}}
\newcommand{\Ima}{\operatorname{Im}}
\newcommand{\Id}{\operatorname{Id}}
\newcommand{\Mat}{\operatorname{Mat}}
\newcommand{\Stab}{\operatorname{Stab}}
\newcommand{\ddim}{\operatorname{\mathbf{dim}}}
\newcommand{\dd}{\operatorname{\mathbf{d}}}
\newcommand{\ee}{\operatorname{\mathbf{e}}}
\newcommand{\rr}{\mathbf{r}}
\newcommand{\pdim}{\mathsf{pdim}}
\newcommand{\M}{\operatorname{\mathcal{M}}}
\newcommand{\XX}{\operatorname{\mathscr{X}}}
\newcommand{\module}{\operatorname{mod}}
\newcommand{\ind}{\operatorname{ind}}
\newcommand{\trdeg}{\operatorname{tr.deg}}
\newcommand{\rk}{\operatorname{rank}}
\newcommand{\qq}{\operatorname{Quot}}
\newcommand{\key}[1]{\emph{#1}}
\begin{document}
\title{On the invariant theory for acyclic gentle algebras}

\author{Andrew T. Carroll}
\address{University of Missouri-Columbia, Mathematics Department, Columbia, MO, USA}
\email[Andrew Carroll]{carrollat@missouri.edu}

\author{Calin Chindris}
\address{University of Missouri-Columbia, Mathematics Department, Columbia, MO, USA}
\email[Calin Chindris]{chindrisc@missouri.edu}

\date{\today}
\bibliographystyle{plain}
\subjclass[2000]{16G10, 16G60, 16R30}
\keywords{gentle algebras, module varieties, moduli spaces of modules, rank sequences, rational invariants, up and down graphs}

\begin{abstract} In this paper we show that the fields of rational invariants over the irreducible components of the module varieties for an acyclic gentle algebra are purely transcendental extensions. Along the way, we exhibit for such fields of rational invariants a transcendence basis in terms of Schofield\rq{}s determinantal semi-invariants.  

We also show that moduli spaces of modules over regular irreducible component are just a products of projective spaces. 
\end{abstract}

\maketitle
\setcounter{tocdepth}{1}
\tableofcontents

\section{Introduction}
Throughout the article, $k$ always denotes an algebraically closed field of characteristic zero. All algebras (associative and with identity) are assumed to be finite-dimensional over $k$, and all modules are assumed to be finite-dimensional left modules.

One of the fundamental problems in the representation theory of algebras is that of classifying the indecomposable modules. Based on the complexity of these modules, one distinguishes the class of tame algebras and that of wild algebras. According to the remarkable Tame-Wild Dichotomy Theorem of Drozd \cite{Dro}, these two classes of algebras are disjoint and they cover the whole class of algebras. Since the representation theory of  a wild algebra is at least as complicated as that of a free algebra in two variables, and since the later theory is known to be undecidable, one can hope to meaningfully classify the indecomposable modules only for tame algebras. For more precise definitions, see \cite[Chapter XIX]{Sim-Sko-3} and the reference therein.

An interesting task in the representation theory of algebras is to study, for a given finite-dimensional algebra, the geometry of the affine varieties of modules of fixed dimension vectors and the actions of the corresponding products of general linear groups. In particular, it would be interesting to find characterizations of prominent classes of tame algebras via geometric properties of their module varieties. This research direction has attracted much attention during the last two decades (see for example \cite{Bob1}, \cite{Bob-Sko-3}, \cite{BS1}, \cite{Bob-Sko-2}, \cite{Carroll1}, \cite{Domo2} \cite{GeiSch}, \cite{Rie}, \cite{Rie-Zwa-1}, \cite{Rie-Zwa-2}, \cite{SW1}).

In this paper, we seek for characterizations of tame algebras in terms of invariant theory. A first result in this direction was obtained by Skowro{\'n}ski and Weyman in \cite[Theorem 1]{SW1} where they showed that a finite-dimensional algebra of global dimension one is tame if and only if all of its algebras of semi-invariants are complete intersections. Unfortunately, this result does not extend to algebras of higher global dimension (not even of global dimension two) as shown by Kra{\'s}kiewicz in \cite{Kra}. As it was suggested by Weyman, in order to characterize the tameness of an algebra via invariant theory, one should impose geometric conditions on the various moduli spaces of semi-stable modules rather than on the entire algebras of semi-invariants. For more precise details, see Section \ref{remarks-sec}.

A description of the tameness of quasi-tilted algebras in terms of the invariant theory of the algebras in question has been found in \cite{CC9,CC10}. In this paper, we continue this line of inquiry for the class of triangular gentle algebras, which are known to be tame.  Their indecomposable modules can be nicely classified, however these algebras still represent an increase in the level of complexity from the tame quasi-tilted case.  For example, it is possible to construct triangular gentle algebras of arbitrarily large global dimension.  Furthermore, the number of one-parameter families required to parameterize $d$-dimensional indecomposable modules can grow faster than any polynomial in $d$.  

\begin{theorem}\label{mainthm-1} Let $A=kQ/I$ be a triangular gentle algebra, $\dd$ a dimension vector of $A$, and $C \subseteq \module(A,\dd)$ an irreducible component. Then the following statements hold:
\begin{enumerate}
\item the field of rational invariants $k(C)^{\GL(\dd)}$ is a purely transcendental extension of $k$ whose transcendece degree equals the sum of the multiplicities of the indecomposable irreducible regular components occurring in the generic decomposition of $C$;

\item if $C$ is an irreducible regular component then for any weight $\theta \in \ZZ^{Q_0}$ with $C^{ss}_{\theta} \neq \emptyset$ and such that the $\theta$-stable summands of $C$ are regular, the moduli space $\M(C)^{ss}_{\theta}$ is just a product of projective spaces. 
\end{enumerate}
\end{theorem}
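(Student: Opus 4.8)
The natural strategy is to reduce everything to the structure of irreducible components via the generic decomposition and then analyze each piece. First I would recall the decomposition theory for module varieties: an irreducible component $C$ decomposes canonically as $C = \overline{C_1 \oplus \cdots \oplus C_r}$ where each $C_i$ is an indecomposable irreducible component, and the generic module in $C$ is a direct sum of generic modules from the $C_i$. The key point for part (1) is that $k(C)^{\GL(\dd)}$ only "sees" the regular components: preprojective and preinjective indecomposable components are rigid (the generic module has no self-extensions), so they contribute a dense orbit and hence nothing to the field of rational invariants; moreover Hom and Ext between components of different "types" should be generically determined, so there is no interaction contributing new invariants. This isolates the regular part, and the problem becomes: for a regular irreducible component, compute $k(C)^{\GL(\dd)}$.

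For the regular components the plan is to use the known classification of indecomposable modules over gentle algebras in terms of strings and bands. An indecomposable irreducible regular component should be of the form $\overline{\GL(\dd) \cdot M_\lambda}$ where $M_\lambda$ ranges over a one-parameter family of band modules (parametrized by $\lambda \in k^*$), so $C$ is generically a union of orbits indexed by $k^*$, i.e. $k(C)^{\GL(\dd)} \cong k(t)$, a rational function field in one variable. For a general regular component with several indecomposable regular summands of multiplicities $m_i$, I expect $k(C)^{\GL(\dd)}$ to be the rational function field in $\sum m_i$ variables: each band contributes its own parameter, and multiplicities are handled by a symmetric-function argument (the invariant of $M_\lambda^{\oplus m}$ being governed by the power sums / elementary symmetric functions in the $m$ eigenvalue-parameters, which generate a purely transcendental subfield). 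To exhibit the transcendence basis concretely in terms of Schofield's determinantal semi-invariants, I would pick for each band a suitable "test" representation $V$ orthogonal to the band (one with $\langle \ddim V, \dd \rangle = 0$ detecting the parameter $\lambda$ as the ratio of two determinants $c^V$), and show these semi-invariants, together with the symmetrizations needed for multiplicities, are algebraically independent and generate the invariant field — using that $\dim C - \dim \GL(\dd) + (\text{generic stabilizer dimension}) = \sum m_i = \trdeg k(C)^{\GL(\dd)}$ to get the count exactly right.

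For part (2), with $C$ a regular component and $\theta$ a weight with $C^{ss}_\theta \neq \emptyset$, I would use the same band decomposition: the $\theta$-semistable locus consists generically of modules that are direct sums of band modules, and by King's construction $\M(C)^{ss}_\theta = C^{ss}_\theta /\!\!/ \GL(\dd)$. Each one-parameter family of bands contributes a $\PP^1$ factor (the parameter $\lambda$ compactified), and when a band occurs with multiplicity $m$ the corresponding factor of the moduli space becomes $\mathrm{Sym}^m \PP^1 = \PP^m$; components of different bands contribute independent factors, giving $\M(C)^{ss}_\theta \cong \prod_i \PP^{m_i}$. The main technical input is showing there are no "off-diagonal" moduli — that semistable modules in $C$ generically have no extensions mixing distinct bands contributing to the GIT quotient — which should follow from the orthogonality properties of regular components for gentle algebras.

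**The main obstacle.** I expect the hardest step to be the multiplicity bookkeeping: proving that when an indecomposable regular component occurs with multiplicity $m > 1$, the field of invariants is genuinely $k(t_1,\dots,t_m)^{S_m} = k(e_1,\dots,e_m)$ (purely transcendental) and not something more subtle, and simultaneously that the Schofield semi-invariants one writes down actually realize the elementary symmetric functions and are algebraically independent. This requires controlling the generic fiber of the quotient map carefully — in particular computing the generic stabilizer and ruling out any nontrivial invariants beyond the symmetric functions of the band parameters — and it is also exactly the place where the passage from the abstract statement to the explicit transcendence basis in terms of determinantal semi-invariants must be made precise.
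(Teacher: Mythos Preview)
Your plan for part~(1) is essentially the paper's approach, though the framing needs adjustment. The dichotomy for indecomposable irreducible components of a triangular gentle algebra is not preprojective/preinjective versus regular; it is \emph{string} components (which are orbit closures and contribute nothing to rational invariants by a direct rational-quotient argument, Lemma~\ref{lemma-orbit-rational-inv}) versus \emph{band} components (the regular ones). Your ``main obstacle'' --- the multiplicity bookkeeping --- is actually handled by a general black-box reduction (Proposition~\ref{reduction-rational-inv-prop}, resting on \cite[Proposition~4.7]{CC9}): one has $k(C)^{\GL(\dd)}\simeq \qq\bigl(\bigotimes_i S^{m_i}(k(C_i)^{\GL(\dd_i)}/k)\bigr)$, and since each $k(C_i)^{\GL(\dd_i)}\simeq k(t)$ (Corollary~\ref{cor:schurbands}(1)) one gets pure transcendence via symmetric functions. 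The ingredient you do not mention, and which is the real hinge, is that this reduction requires the non-orbit-closure summands $C_1,\dots,C_n$ to have \emph{pairwise distinct dimension vectors}. This is not automatic; it follows from Corollary~\ref{cor:uniqueness}: a given $\module(A,\dd')$ contains at most one regular irreducible component. Without this uniqueness the reduction does not apply.

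For part~(2) there is a genuine gap: you are using the \emph{generic} decomposition of $C$ where the argument requires the \emph{$\theta$-stable} decomposition. The GIT quotient $\M(C)^{ss}_\theta$ identifies S-equivalent modules, so its structure is governed by the Jordan--H\"older factors of the generic module inside the abelian category $\module(A)^{ss}_\theta$, not by its Krull--Schmidt summands. These two decompositions need not coincide --- an indecomposable band may be only strictly $\theta$-semistable, and the $\theta$-stable factors depend on $\theta$ --- so your predicted factors $\PP^{m_i}$ indexed by generic multiplicities are not, in general, the correct ones. The paper instead proceeds as follows: show $C$ is $\theta$-well-behaved (via the uniqueness in Corollary~\ref{cor:uniqueness}); show the $\theta$-stable factors of a generic regular module are themselves regular, using the rank count $\sum_{a}\rk M'(a)=\sum_{a}\rk M(a)=\dim_k M$ to rule out string summands in the associated graded; conclude that the $\theta$-stable components $C_1,\dots,C_m$ have distinct dimension vectors and that $\overline{\bigoplus_i C_i^{\oplus l_i}}\subseteq C$; then invoke Theorem~\ref{theta-stable-decomp-thm} (using normality of $C$) together with Corollary~\ref{cor:schurbands}(2) to obtain $\M(C)^{ss}_\theta\simeq\prod_i\PP^{l_i}$. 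Your ``off-diagonal moduli'' concern is legitimate, but it is resolved by this $\theta$-stable route, not by Ext-orthogonality of generic summands.
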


Our next main result gives a transcendence basis for $k(C)^{\GL(\dd)}$ when $C$ is an irreducible regular component. We do this via the so called up and down graphs, introduced in the context of triangular gentle algebras by the first author (see \cite{Carroll2}). The up and down graphs are combinatorially defined objects which hold valuable geometric information in that they: $(1)$ give the generic decomposition of irreducible components in module varieties; $(2)$ explicitly describe the generic modules in irreducible components; and $(3)$ allow for explicit computations of the so called generalized Schofield' semi-invariants which in turn are rather remarkable coordinates on moduli spaces of modules for finite-dimensional algebras.

\begin{theorem}\label{mainthm-2} Let $A=kQ/I$ be a triangular gentle algebra, $\module(A,\dd,\rr)$ an irreducible regular component, and 
$$\module(A, \dd, \rr) = \overline{\bigoplus_{i=1}^n\module(A, \dd_i,\rr_i)^{\oplus m_i}}$$
be the generic decomposition of $\module(A,\dd,\rr)$ where $\module(A, \dd_i,\rr_i), 1 \leq i \leq n$, are (pairwise distinct) indecomposable irreducible regular components and $m_1,\ldots, m_n$ are positive integers. 

For each $i$, let $\lambda(i,j), 0\leq j \leq p_i$, be pairwise distinct elements of $k^*$ and $M(\dd_i,\rr_i,\lambda(i,j)), 0\leq j \leq m_i$, be the corresponding generic modules of $\module(A,\dd_i,\rr_i)$. Then, $k(\module(A, \dd,\rr))^{\GL(\dd)}$ is a purely transcendental extension of $k$ of degree $N=\sum m_i$ with transcendence basis
\begin{equation}
   \left\{\frac{\overline{c}^{M(\dd_i,\rr_i,\lambda(i,j))\phantom{+}}}{\overline{c}^{M(\dd_i,\rr_i,\lambda(i,j+1))}};
    i=1,\dotsc, n,\ j=0,\dotsc, m_i-1\right\}.
\end{equation}
\end{theorem}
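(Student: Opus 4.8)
The plan is to reduce the statement to a computation with symmetric functions. First I would observe that the elements of the displayed set are genuine rational $\GL(\dd)$-invariants on $\module(A,\dd,\rr)$. Since $M(\dd_i,\rr_i,\lambda(i,j))$ and $M(\dd_i,\rr_i,\lambda(i,j+1))$ differ only in their band parameter, they have the same dimension vector and minimal projective presentations with the same projective terms, so $\overline{c}^{M(\dd_i,\rr_i,\lambda(i,j))}$ and $\overline{c}^{M(\dd_i,\rr_i,\lambda(i,j+1))}$ carry the same $\GL(\dd)$-weight and their quotient has weight $0$; moreover the Euler form of $\ddim M(\dd_i,\rr_i,\lambda(i,j))$ with $\dd$ vanishes, so the semi-invariant is defined, and it is nonzero at a module $N$ exactly when $\Hom(M(\dd_i,\rr_i,\lambda(i,j)),N)=0$, which holds for generic $N$ in the component because $\lambda(i,j)\in k^{*}$ is a fixed scalar. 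Thus the displayed set consists of $N=\sum_i m_i$ elements of $k(\module(A,\dd,\rr))^{\GL(\dd)}$; by Theorem~\ref{mainthm-1} this field is purely transcendental over $k$ of transcendence degree $N$, so it is enough to prove that these $N$ elements are algebraically independent over $k$.

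Next I would make the generic point of the component explicit. By the up and down graph description of the generic decomposition and of the generic modules, the generic module of $\module(A,\dd,\rr)$ is
\[
N(t)=\bigoplus_{i=1}^{n}\bigoplus_{s=1}^{m_i}M(\dd_i,\rr_i,t(i,s)),
\]
with the $t(i,s)\in k^{*}$ pairwise distinct for each fixed $i$, and $N(t)\cong N(t')$ exactly when $t$ and $t'$ agree up to the action of $\prod_i S_{m_i}$ permuting parameters within each block; since this locus is dense in the component, restriction to it embeds $k(\module(A,\dd,\rr))^{\GL(\dd)}$ into $k(t(i,s):1\le i\le n,\ 1\le s\le m_i)$. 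I would then evaluate the semi-invariants along this locus: using multiplicativity of $\overline{c}^{W}$ under direct sum decompositions of the argument, the Hom-vanishing criterion above, and the up and down graph computation of the generalized Schofield semi-invariants on band modules, one obtains that for each $i$ the restriction of $\overline{c}^{M(\dd_i,\rr_i,x)}$ to this locus equals, up to a nonzero scalar and a factor not involving the parameters $t(i,\cdot)$, the product $\prod_{s=1}^{m_i}(x-t(i,s))$: the $s$-th factor records the simple zero of the determinant produced by $\dim\Hom(M(\dd_i,\rr_i,x),M(\dd_i,\rr_i,t(i,s)))$, while the summands from the blocks $l\neq i$ contribute a nonzero constant. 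Specializing $x=\lambda(i,j)$ and $x=\lambda(i,j+1)$ and taking the quotient, the common extraneous factor cancels, and under the identification above
\[
\frac{\overline{c}^{M(\dd_i,\rr_i,\lambda(i,j))}}{\overline{c}^{M(\dd_i,\rr_i,\lambda(i,j+1))}}=\prod_{s=1}^{m_i}\frac{\lambda(i,j)-t(i,s)}{\lambda(i,j+1)-t(i,s)}
\]
up to a nonzero scalar.

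Finally I would deduce algebraic independence by interpolation. Fix $i$ and put $P_j=\prod_{s=1}^{m_i}(\lambda(i,j)-t(i,s))$ for $0\le j\le m_i$. The nonzero scalars occurring in the previous step lie in $k$, hence in the subfield generated by the displayed set; so that subfield contains $P_j/P_{j+1}$ for $0\le j\le m_i-1$, and multiplying consecutive quotients it contains $P_j/P_{m_i}$ for all $0\le j\le m_i$ (the value $1$ for $j=m_i$). These are the values at the $m_i+1$ distinct points $\lambda(i,0),\dots,\lambda(i,m_i)$ of the degree-$m_i$ polynomial $x\mapsto(\prod_s(x-t(i,s)))/P_{m_i}$, so Lagrange interpolation recovers all of its coefficients, hence, after dividing by the leading one, the elementary symmetric polynomials $e_1(t(i,\cdot)),\dots,e_{m_i}(t(i,\cdot))$. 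Letting $i$ vary, the subfield generated by the displayed set contains $k(e_r(t(i,\cdot)):1\le i\le n,\ 1\le r\le m_i)$, which has transcendence degree $\sum_i m_i=N$ over $k$; since the displayed set has exactly $N$ elements, they are algebraically independent over $k$, and therefore form a transcendence basis of $k(\module(A,\dd,\rr))^{\GL(\dd)}$.

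The main obstacle is the middle step: the explicit evaluation of the generalized Schofield semi-invariants on direct sums of band modules. This is exactly where the up and down graph calculus for these semi-invariants is needed, together with the orthogonality properties of band modules of different types, and of the same type with distinct parameters, so that the contributions of the foreign summands — together with the inherent scalar and coboundary ambiguities of the $\overline{c}^{W}$ — reduce to constants that cancel in the quotients. Granting that computation, the first and last steps are formal.
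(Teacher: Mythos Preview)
Your argument is correct and runs parallel to the paper's. Both proofs rest on the explicit evaluation of $\overline{c}^{M(\dd_i,\rr_i,\lambda)}$ at a generic direct sum of band modules (Proposition~\ref{prop:arbitrary-bands} in the paper), which you rightly flag as the crux. One small inaccuracy: the semi-invariant $\overline{c}^{M(\dd_i,\rr_i,x)}(X_{\underline{\mu}})$ carries an additional factor $\prod_{s}\mu(i,s)^{l_i}$ that \emph{does} involve the parameters $t(i,\cdot)$, contrary to your phrase ``a factor not involving the parameters $t(i,\cdot)$''; however, this factor is independent of $x$ and hence cancels in the quotient, so your displayed formula for the ratio is correct.

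Where you and the paper diverge is in how the conclusion is extracted from this formula. The paper shows that the ratios separate generic orbits: if all $N$ ratios agree on $X_{\underline{\mu}}$ and $X_{\underline{\nu}}$, then for each $i$ the polynomial $\prod_{j'}(x-\mu(i,j'))-c_i\prod_{j'}(x-\nu(i,j'))$ of degree at most $m_i$ vanishes at the $m_i+1$ distinct points $\lambda(i,0),\dots,\lambda(i,m_i)$, forcing $c_i=1$ and $\{\mu(i,j')\}=\{\nu(i,j')\}$; then \cite[Lemma~2.1]{PopVin} gives that the ratios generate the invariant field. You instead run Lagrange interpolation forward, recovering the elementary symmetric functions in each block of band parameters from the ratios and concluding that the subfield they generate already has transcendence degree $N$. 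These are two readings of the same interpolation identity; the paper's framing yields generation of the field, yours yields algebraic independence, and either suffices for the transcendence basis claim once Theorem~\ref{mainthm-1} supplies the transcendence degree.
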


We point out that the class of tame hereditary algebras is the only other class of tame algebras for which a similar transcendence basis has been constructed (see \cite{R4}). For wild quivers, the corresponding rationality problem, which has been open for more than $45$ years, is one of the most important open problems in rational invariant theory (see \cite{Boh}). It is our hope that the representation-theoretic description of the transcendence basis in the theorem above will inspire the construction of transcendence bases in other situations, too.

In Section \ref{sect:background}, we outline the pertinent notions related to bound quiver algebras.  This includes a description of the module varieties and generic decomposition of irreducible components due to Crawley-Boevey and Schr\"{o}er.  Section \ref{sec:rational-invariants} recalls the basic notions of rational invariants and rational quotients.  It includes a general reduction result for fields of rational invariants over irreducible components in module varieties. In Section \ref{sect:modulispaces}, we first review King's construction of moduli spaces of modules for finite-dimensional modules, and then state another general reduction result that allows one to break a moduli space of modules into smaller ones. We also show that, at least in the tame case, these smaller moduli spaces are rather well-behaved (see Proposition \ref{prop:tamerationality}).  These general results are applied to gentle algebras in Section \ref{sec:gentle-sec}, following a recollection of the construction of their generic modules.  We prove a number of facts about these irreducible components which allow an application of the reduction techniques of Sections \ref{sect:modulispaces} and \ref{sec:rational-invariants}.  In section \ref{proofs-sec} we prove the two main theorems of the paper, and point out some idiosynchracies that arise in the generic decompositions of modules over gentle algebras.  Section \ref{remarks-sec} concludes the article by placing the results in the framework of an effort to characterize tameness of an algebra via invariant theoretic characteristics.  

\subsection*{Acknowledgements}  We would like to thank Harm Derksen and Piotr Dowbor for inspiring discussions on the subject of the paper. The second author was supported by NSF grant DMS-1101383.

\section{Background}\label{sect:background}
\subsection{Bound quiver algebras}\label{sect:thebasics} Let $Q=(Q_0,Q_1,t,h)$ be a finite quiver with vertex set $Q_0$ and arrow set $Q_1$. The two functions $t,h:Q_1 \to Q_0$ assign to each arrow $a \in Q_1$ its tail \emph{ta} and head \emph{ha}, respectively.

A representation $V$ of $Q$ over $k$ is a collection $(V(i),V(a))_{i\in Q_0, a\in Q_1}$ of finite-dimensional $k$-vector spaces $V(i)$, $i \in Q_0$, and $k$-linear maps $V(a) \in \Hom_k(V(ta),V(ha))$, $a \in Q_1$. The dimension vector of a representation $V$ of $Q$ is the function $\ddim V : Q_0 \to \ZZ$ defined by $(\ddim V)(i)=\dim_{k} V(i)$ for $i\in Q_0$. Let $S_i$ be the one-dimensional representation of $Q$ at vertex $i \in Q_0$. By a dimension vector of $Q$, we simply mean a vector $\dd \in \ZZ_{\geq 0}^{Q_0}$.

Given two representations $V$ and $W$ of $Q$, we define a morphism $\varphi:V \rightarrow W$ to be a collection $(\varphi(i))_{i \in Q_0}$ of $k$-linear maps with $\varphi(i) \in \Hom_k(V(i), W(i))$ for each $i \in Q_0$, and such that $\varphi(ha)V(a)=W(a)\varphi(ta)$ for each $a \in Q_1$. We denote by $\Hom_Q(V,W)$ the $k$-vector space of all morphisms from $V$ to $W$. Let $V$ and $W$ be two representations of $Q$. We say that $V$ is a subrepresentation of $W$ if $V(i)$ is a subspace of $W(i)$ for each $i \in Q_0$ and $V(a)$ is the restriction of $W(a)$ to $V(ta)$ for each $a \in Q_1$. In this way, we obtain the abelian category $\rep(Q)$ of all representations of $Q$.

Given a quiver $Q$, its path algebra $kQ$ has a $k$-basis consisting of all paths (including the trivial ones) and the multiplication in $kQ$ is given by concatenation of paths. It is easy to see that any $kQ$-module defines a representation of $Q$, and vice-versa. Furthermore, the category $\module(kQ)$ of $kQ$-modules is equivalent to the category $\rep(Q)$. In what follows, we identify $\module(kQ)$ and $\rep(Q)$, and use the same notation for a module and the corresponding representation.

A two-sided ideal $I$ of $kQ$ is said to be \emph{admissible} if there exists an integer $L\geq 2$ such that $R_Q^L\subseteq I \subseteq R_Q^2$. Here, $R_Q$ denotes the two-sided ideal of $kQ$ generated by all arrows of $Q$. 

If $I$ is an admissible ideal of $kQ$, the pair $(Q,I)$ is called a \emph{bound quiver} and the quotient algebra $kQ/I$ is called the \emph{bound quiver algebra} of $(Q,I)$.  Any admissible ideal is generated by finitely many admissible relations, and any bound quiver algebra is finite-dimensional and basic. Moreover, a bound quiver algebra $kQ/I$ is connected if and only if (the underlying graph of) $Q$ is connected (see for example \cite[Lemma~II.2.5]{AS-SI-SK}).

Up to Morita equivalence, any finite-dimensional algebra $A$ can be viewed as the bound quiver algebra of a bound quiver $(Q_{A},I)$, where $Q_{A}$ is the Gabriel quiver of $A$ (see \cite[Corollary I.6.10 and Theorem~II.3.7]{AS-SI-SK}). (Note that the ideal of relations $I$ is not uniquely determined by $A$.) We say that $A$ is a \emph{triangular} algebra if its Gabriel quiver has no oriented cycles.

Fix a bound quiver $(Q,I)$ and let $A=kQ/I$ be its bound quiver algebra.  A representation $M$ of $A$ (or  $(Q,I)$) is just a representation $M$ of $Q$ such that $M(r)=0$ for all $r \in I$. The category $\module(A)$ of finite-dimensional left $A$-modules is equivalent to the category $\rep(A)$ of representations of $A$. As before, we identify $\module(A)$ and $\rep(A)$, and make no distinction between $A$-modules and representations of $A$.  For each vertex $x \in Q_0$, we denote by $P_x$ the projective indecomposable $A$-module at vertex $x$.  For an $A$-module $M$, we denote by $\pdim M$ its projective dimension.  An $A$-module is called \emph{Schur} if $\End_A(M) \cong k$.  

An algebra $A$ is called \emph{tame} if for each dimension vector $\dd\in \ZZ_{\geq 0}^{Q_0}$, the subcategory $\ind_{\dd}(A)$, whose objects are the indecomposable $\dd$-dimensional $A$-modules, is parametrized, the sense of \cite{Sim-Sko-3}, by a finite family of functors $F_i:=Q_i \otimes_{A_i}-: \ind_1(A_i) \to \module(A)$, $1 \leq i \leq n$, where for each $1 \leq i \leq n $, we have: $(1)$ $A_i=k$ or $A_i=k[t]_{h_i}$ is a localization of $k[t]$; $(2)$ $Q_i$ is an $A-A_i$-bimodule which is finitely generated and free as a right $A_i$-module; and $(3)$ the functor $F_i$ is a representation embedding. For more details, we refer to \cite{DowSko2} and \cite{Sim-Sko-3}. 

For the remainder of this subsection, we assume that $A$ has finite global dimension; this happens, for example, when $Q$ has no oriented cycles. The \emph{Euler form} of $A$ is the bilinear form  $\langle \langle \cdot, \cdot \rangle \rangle_{A} : \ZZ^{Q_0}\times \ZZ^{Q_0} \to \ZZ$ defined by
$$
\langle \langle \dd,\ee \rangle \rangle_{A}=\sum_{l\geq 0}(-1)^l \sum_{i,j\in Q_0}\dim_k \Ext^l_{A}(S_i,S_j)\dd(i)\ee(j).
$$
Note that if $M$ is a $\dd$-dimensional $A$-module and $N$ is an $\ee$-dimensional $A$-module then
$$
\langle \langle \dd,\ee \rangle \rangle_{A}=\sum_{l\geq 0}(-1)^l \dim_k \Ext^l_{A}(M,N).
$$

\subsection{Module varieties and their irreducible components}\label{sect:modvar} Let $\dd$ be a dimension vector of $A=kQ/I$ (or equivalently, of $Q$). The affine variety
$$
\module(A,\dd):=\{M \in \prod_{a \in Q_1} \Mat_{\dd(ha)\times \dd(ta)}(k) \mid M(r)=0, \forall r \in
I \}
$$
is called the \emph{module/representation variety} of $\dd$-dimensional modules/representations of $A$. The affine space $\module(Q,\dd):= \prod_{a \in Q_1} \Mat_{\dd(ha)\times \dd(ta)}(k)$ is acted upon by the base change group $$\GL(\dd):=\prod_{i\in Q_0}\GL(\dd(i),k)$$ by simultaneous conjugation, i.e., for $g=(g(i))_{i\in Q_0}\in \GL(\dd)$ and $V=(V(a))_{a \in Q_1} \in \module(Q,\dd)$,  $g \cdot V$ is defined by $$(g\cdot V)(a)=g(ha)V(a) g(ta)^{-1}, \forall a \in Q_1.$$ 

It can be easily seen that $\module(A,\dd)$ is a $\GL(\dd)$-invariant closed subvariety of $\module(Q,\dd)$, and that the $\GL(\dd)-$orbits in $\module(A,\dd)$ are in one-to-one correspondence with the isomorphism classes of the $\dd$-dimensional $A$-modules. Note that $\module(A, \dd)$ does not have to be irreducible. 

Let $C$ be an irreducible component of $\module(A, \dd)$. We say that $C$ is \emph{indecomposable} if $C$ has a non-empty open subset of indecomposable modules; whenever $\module(A,\dd)$ has such an irreducible component, we say that $\dd$ is a \emph{generic root} of $A$. 

Given a decomposition $\dd=\dd_1+\ldots +\dd_t$ where $\dd_i \in \ZZ^{Q_0}_{\geq 0}, 1 \leq i \leq t$, and $\GL(\dd_i)$-invariant constructible subsets $C_i\subseteq \module(A,\dd_i)$, $1 \leq i \leq t$, we denote by $C_1\oplus \ldots \oplus C_t$ the constructible subset of $\module(A,\dd)$ defined as:  
$$C_1\oplus \ldots \oplus C_t=\{M \in \module(A,\dd) \mid M\simeq \bigoplus_{i=1}^t M_i\text{~with~} M_i \in C_i, \forall 1 \leq i \leq t\}.$$

As shown by de la Pe{\~n}a in \cite[Section 1.3]{delaP} and Crawley-Boevey and Schr{\"o}er in \cite[Theorem~1.1]{C-BS}, any irreducible component of a module variety satisfies a Krull-Schmidt type decomposition. Specifically, if $C$ is an irreducible component of $\module(A,\dd)$ then there are unique generic roots $\dd_1, \ldots, \dd_t$ of $A$ such that $\dd=\dd_1+\ldots +\dd_t$ and
$$
C=\overline{C_1\oplus \ldots \oplus C_t}
$$
for some indecomposable irreducible components $C_i\subseteq \module(A,\dd_i), 1 \leq i \leq t$. Moreover, the indecomposable irreducible components $C_i, 1 \leq i \leq t,$ are uniquely determined by this property. We call $\dd=\dd_1\oplus \ldots \oplus \dd_t$ the \emph{generic decomposition of $\dd$ in $C$}, and $C=\overline{C_1\oplus \ldots \oplus C_t}$ the \emph{generic decomposition of $C$}.

It also follows from \cite[Theorem 1.2]{C-BS} that if $C=\overline{C_1\oplus \ldots \oplus C_t}$ is the generic decomposition of $C\subseteq \module(A,\dd)$ then $\overline{\bigoplus_{i \in J} C_i}$ is an irreducible component of $\module(A,\sum_{i \in J}\dd(i))$ for any subset $J \subseteq \{1,\ldots, t\}$.

For the remainder of this section, we assume that $A$ is a tame algebra and let $\dd$ be a generic root of $A$. Denote by $\ind(A,\dd)$ the constructible subset of $\module(A,\dd)$ consisting of all $\dd$-dimensional indecomposable $A$-modules. 

We know from the work of Dowbor and Skowro{\'n}ski in \cite{DowSko2} that there are finitely many principal open subsets $\mathcal U_i\subseteq \mathbb A^1=k$ and regular morphisms $f_i:\mathcal U_i \to \module(A,\dd)$, $1 \leq i \leq n$, such that:
\begin{itemize}
\item for each $1 \leq i \leq n$, $f_i(\mathcal U_i) \subset \ind(A,\dd)$, and if $f_i(\lambda_1) \simeq f_i(\lambda_2)$ as $A$-modules then $\lambda_1=\lambda_2$;
\item all modules in $\ind(A,\dd)$, except possibly finitely many isoclasses, belong to $\bigcup_{i=1}^n \mathcal F_i$, where each $\mathcal F_i$ is the closure of the image of the  action morphism $\GL(\dd) \times \mathcal U_i \to \module(A,\dd)$ that sends $(g,\lambda)$ to $g\cdot f_i(\lambda)$, i.e. $\mathcal F_i=\overline{\bigcup_{\lambda \in \mathcal U_i} \GL(\dd)f_i(\lambda)}$.
\end{itemize}
(We call $(\mathcal U_i,f_i)$, $1 \leq i \leq n$, \emph{parameterizing pairs} for $\ind(A,\dd)$.)

Consequently, we have that $$\overline{\ind(A,\dd)}=\bigcup_{i=1}^n \mathcal F_i \cup \bigcup_{j=1}^l\overline{\GL(\dd)M_j}$$ where $M_1, \ldots, M_l \in \ind(A,\dd)$. 

Now, let $C \subseteq \module(A,\dd)$ be an indecomposable irreducible component; in particular, $C$ is an irreducible component of $\overline{\ind(A,\dd)}$. From the discussion above, it follows that either:
\begin{itemize}
\item $C=\mathcal F_i$ for some $1 \leq i \leq n$ or;
\item $C=\overline{\GL(\dd)M_j}$ for some $1 \leq j \leq l$.
\end{itemize}

In the tame case, we have the following simple but very useful dimension count:

\begin{lemma}\label{dim-count-irr-comp} Let $A=kQ/I$ be a tame bound quiver algebra, $\dd$ a generic root, and $C \subseteq \module(A,\dd)$ an indecomposable irreducible component. Then,
$$
\dim C-\dim \GL(\dd)+\min\{\dim_k \End_A(M)\mid M \in C\} \in \{0,1\}.
$$
\end{lemma}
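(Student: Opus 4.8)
The plan is to exploit the case division established just before the lemma: since $C$ is an indecomposable irreducible component of $\module(A,\dd)$ with $\dd$ a generic root, either $C = \mathcal{F}_i$ for one of the parameterizing families $(\mathcal{U}_i, f_i)$, or $C = \overline{\GL(\dd)M_j}$ for a single indecomposable module $M_j$. I would treat these two cases separately, in each case computing the quantity $\dim C - \dim\GL(\dd) + \min\{\dim_k\End_A(M) \mid M \in C\}$ directly.

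First consider the case $C = \overline{\GL(\dd)M_j}$, i.e.\ $C$ is the closure of a single orbit. Here $\dim C = \dim \GL(\dd)M_j = \dim\GL(\dd) - \dim\Stab_{\GL(\dd)}(M_j)$, and the stabilizer of $M_j$ is the group of units of $\End_A(M_j)$, which is a dense open subset of the vector space $\End_A(M_j)$; hence $\dim\Stab_{\GL(\dd)}(M_j) = \dim_k\End_A(M_j)$. Moreover the generic module in $C$ is $M_j$ itself (the orbit is dense), so $\min\{\dim_k\End_A(M)\mid M\in C\}$ is attained on the orbit and, by upper semicontinuity of $M\mapsto\dim_k\End_A(M)$, equals $\dim_k\End_A(M_j)$. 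Substituting gives $\dim C - \dim\GL(\dd) + \dim_k\End_A(M_j) = 0$, so the value is $0$ in this case.

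Next consider $C = \mathcal{F}_i = \overline{\GL(\dd)\cdot f_i(\mathcal{U}_i)}$, a one-parameter family. The image of the action morphism $\GL(\dd)\times\mathcal{U}_i \to \module(A,\dd)$, $(g,\lambda)\mapsto g\cdot f_i(\lambda)$, is dense in $C$, so $\dim C$ equals the dimension of that image; for generic $\lambda$ the fiber over $f_i(\lambda)$ has dimension $\dim\Stab_{\GL(\dd)}(f_i(\lambda)) = \dim_k\End_A(f_i(\lambda))$ (again because the stabilizer is the unit group of the endomorphism algebra), while the injectivity condition $f_i(\lambda_1)\simeq f_i(\lambda_2)\Rightarrow\lambda_1=\lambda_2$ ensures the $\mathcal{U}_i$-direction contributes one extra dimension not already accounted for by the group action. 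Thus $\dim C = \dim\GL(\dd) + 1 - \dim_k\End_A(M)$ for a generic $M = f_i(\lambda) \in C$, and since the generic value of $\dim_k\End_A$ on $C$ is the minimum, we get $\dim C - \dim\GL(\dd) + \min\{\dim_k\End_A(M)\mid M\in C\} = 1$. Combining the two cases, the quantity lies in $\{0,1\}$.

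The main obstacle I anticipate is making the dimension bookkeeping in the $\mathcal{F}_i$ case fully rigorous: one must verify that the map $\GL(\dd)\times\mathcal{U}_i\to C$ is dominant with generic fiber dimension exactly $\dim_k\End_A(f_i(\lambda))$, which requires that the $\GL(\dd)$-orbits meeting $f_i(\mathcal{U}_i)$ are pairwise distinct for generic parameter values (this is exactly the injectivity built into the definition of parameterizing pairs) and that no positive-dimensional subfamily of $\mathcal{U}_i$ collapses into a single orbit. The semicontinuity statement identifying the generic value of $\dim_k\End_A$ with its minimum over $C$ is standard, but one should be slightly careful that the minimum is attained on the dense image of the parameterizing family rather than only on the finitely many extra orbits $\overline{\GL(\dd)M_j}$ that may appear in $\overline{\ind(A,\dd)}$ but not in $C$ itself.
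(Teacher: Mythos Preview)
Your proposal is correct and follows essentially the same approach as the paper: both treat the orbit-closure case trivially and, in the one-parameter case, apply the theorem on the generic fibre to the dominant action morphism $\GL(\dd)\times\mathcal U\to C$. The paper resolves precisely the obstacle you flagged by computing the generic fibre explicitly: if $g\cdot f(\lambda)=g_0\cdot f(\lambda_0)$ then $f(\lambda)\simeq f(\lambda_0)$, hence $\lambda=\lambda_0$ by the injectivity built into parameterizing pairs, and then $g\in g_0\Stab_{\GL(\dd)}(f(\lambda_0))$; so $\mu^{-1}(M_0)=g_0\Stab_{\GL(\dd)}(f(\lambda_0))\times\{\lambda_0\}$, giving the fibre dimension $\dim_k\End_A(M_0)$ on the nose.
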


\begin{proof} If $C$ is an orbit closure then the expression above is obviously zero. The only other possibility is when $C=\overline{\bigcup_{\lambda \in \mathcal U}\GL(\dd)f(\lambda)}$ with $(\mathcal U,f)$ a parameterizing pair as above. The action morphism $\mu:\GL(\dd)\times \mathcal U \to C, \mu(g,\lambda)=gf(\lambda), \forall (g,\lambda) \in \GL(\dd)\times \mathcal U$, is dominant by construction and, moreover, for a generic $M_0=\mu(g_0,\lambda_0) \in C$:

\begin{align*}
\mu^{-1}(M_0)&=\{(g,\lambda) \in \GL(\dd)\times \mathcal U \mid g\cdot f(\lambda)=g_0\cdot f(\lambda_0)\} \\
&=g_0\Stab_{\GL(\dd)}(f(\lambda_0))\times\{\lambda_0\}.
\end{align*}
In particular, this shows that $\dim \mu^{-1}(M_0)=\dim_k \End_A(M_0)$. Using now the theorem on the generic fiber, we get that $\dim \GL(\dd)+1-\dim C=\dim_k \End_A(M_0)$ which is exactly what we need to prove.
\end{proof}

\section{Rational Invariants}\label{sec:rational-invariants} Let $A=kQ/I$ be a bound quiver algebra, $\dd \in \ZZ_{\geq 0}^{Q_0}$ a dimension vector of $A$, and $C$ a $\GL(\dd)$-invariant irreducible closed subset of $\module(A,\dd)$. 

The field of rational $\GL(\dd)$-invariants on $C$, denoted by $k(C)^{\GL(\dd)}$, is defined as follows:
$$
k(C)^{\GL(\dd)}=\{\varphi \in k(C) \mid g\varphi=\varphi, \forall g \in \GL(\dd) \}.
$$ 
Our motivation for studying such fields of rational invariants is twofold: on one hand, $k(C)^{\GL(\dd)}$ is the function field of the rational quotient (in the sense of Rosenlicht) of $C$ by $\GL(\dd)$ which parametrizes the generic $A$-modules in $C$. Hence, these fields play a key role in the generic representation theory of $A$. 

On the other hand, there are important projective (parameterizing) varieties, such as the Hilbert scheme of points on $\PP^2$ (see \cite{ABCH}) or various moduli spaces of sheaves over rational surfaces (see \cite{S4}), whose function fields can be viewed as fields of rational invariants for bound quiver algebras. Thus, it is desirable to know how methods and ideas from representation theory of algebras can be used to shed light on the rationality question for such varieties.

In what follows, we explain how to reduce the problem of describing fields of rational invariants on irreducible components of module varieties to the case where the irreducible components involved are indecomposable. But first we recall some fundamental facts from rational invariant theory for which we refer the reader to \cite{Rei1} and the reference therein. Let $G$ be a linear algebraic group acting regularly on an irreducible variety $X$. The field $k(X)^G$ of $G$-invariant rational functions on $X$ is always finitely generated over $k$ since it is a subfield of $k(X)$ which is finitely generated over $k$. A \emph{rational quotient} of $X$ by (the action of) $G$ is an irreducible variety $Y$ such that $k(Y)=k(X)^G$ together with the dominant rational map $\pi:X \dashrightarrow Y$ induced by the inclusion $k(X)^G \subset k(X)$. In \cite{Ros}, Rosenlicht shows that there is always a rational quotient of $X$ by $G$, which is uniquely defined up to birational isomorphism. In fact, it is proved that there is a $G$-invariant open and dense subset $X_0$ of $X$ such that the restriction of $\pi$ to $X_0$ is a dominant regular  morphism and $\pi^{-1}(\pi(x))=Gx$ for all $x \in X_0$ (see \cite[Theorem 2]{Ros} or \cite[Section 1.6]{Bri}). Furthermore, one can show that a rational quotient $\pi:X \dashrightarrow Y$ satisfies the following universal property (see \cite[Remark 2.4 and 2.5]{Rei1} or  \cite[Section 2.4]{PopVin}): Let $\rho:X\dashrightarrow Y'$ be a rational map such that $\rho^{-1}(\rho(x))=Gx$ for $x \in X$ in general position. Then there exists a rational map
$$
\overline{\rho}:Y \dashrightarrow Y'
$$
such that $\rho=\overline{\rho} \circ \pi$. If in addition $\rho$ is dominant then $\overline{\rho}$ becomes a birational isomorphism. One usually writes $X/G$ in place of $Y$ and call it \emph{the} rational quotient of $X$ by $G$.

\begin{remark} From the discussion above and using the theorem on the generic fiber, one can immediately see that
$$
\trdeg k(X)^{G}=\dim X-\dim G +\min \{\dim \Stab_G(x) \mid x \in X\}.
$$
This formula combined with Lemma \ref{dim-count-irr-comp} shows that for a tame algeba $A$, a dimension vector $\dd$, and an indecomposable irreducible component $C \subseteq \module(A,\dd)$, $\trdeg k(C)^{\GL(\dd)} \in \{0,1\}$.
\end{remark}

In what follows, if $R$ is an integral domain, we denote its field of fractions by $\qq(R)$. Moreover, if $K/k$ is a field extension and $m$ is a positive integer, we define $S^m(K/k)$ to be the field $(\qq(K^{\otimes m}))^{S_m}$ which is, in fact, the same as $\qq((K^{\otimes m})^{S_m})$ since $S_m$ is a finite group.

Now we are ready to prove the following reduction result (compare to \cite[Proposition 4.7]{CC9}): 

\begin{prop}\label{reduction-rational-inv-prop} Let $C \subseteq \module(A,\dd)$ be an irreducible component and let 
$$
C=\overline{\bigoplus_{i=1}^n C_i^{\oplus m_i}\oplus \bigoplus_{j=n+1}^l C_j}
$$ be the generic decomposition of $C$ where $C_i \subseteq \rep(\Lambda,\dd_i)$, $1 \leq i \leq l$, are indecomposable irreducible components with $C_{n+1}, \dots, C_l$ orbit closures, $m_1,\ldots, m_n$ are positive integers, and $\dd_i\neq \dd_j, \forall 1 \leq i\neq j\leq n$. Then,
$$
k(C)^{\GL(\dd)} \simeq \qq(\bigotimes_{i=1}^n S^{m_i}(k(C_i)^{\GL(\dd_i)}/k)).
$$
\end{prop}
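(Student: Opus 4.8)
The plan is to build the isomorphism in two stages: first reduce to the case where $C$ is a direct sum of copies of the $C_i$ (i.e.\ strip off the orbit closures $C_{n+1},\dots,C_l$), and then handle each isotypic block $\overline{C_i^{\oplus m_i}}$ separately, multiplying the resulting fields together. For the first stage, I would use the fact (cited from \cite[Theorem 1.2]{C-BS}) that $\overline{\bigoplus_{i\in J}C_i}$ is again an irreducible component for any subset $J$, together with the observation that for an orbit closure $C_j=\overline{\GL(\dd_j)M_j}$ the generic module is rigid and contributes nothing to the rational quotient. Concretely, the generic module $M$ of $C$ decomposes as $M\simeq M'\oplus M_{n+1}\oplus\cdots\oplus M_l$ with $M'$ the generic module of $C':=\overline{\bigoplus_{i=1}^n C_i^{\oplus m_i}}$ and each $M_j$ a fixed rigid indecomposable; the isomorphism type of $M$ is determined by that of $M'$, so the rational quotient of $C$ by $\GL(\dd)$ is birational to that of $C'$ by $\GL(\dd')$, whence $k(C)^{\GL(\dd)}\simeq k(C')^{\GL(\dd')}$. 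I would make this rigorous using Rosenlicht's theorem: exhibit a $\GL(\dd)$-invariant dense open subset of $C$ consisting of modules of the form $g\cdot(M'\oplus M_{n+1}\oplus\cdots\oplus M_l)$ and show the projection to the rational quotient factors through the analogous map for $C'$.

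For the second stage, the core assertion is that for a single indecomposable irreducible component $C_i\subseteq\module(A,\dd_i)$ and a positive integer $m_i$,
\[
k\bigl(\overline{C_i^{\oplus m_i}}\bigr)^{\GL(m_i\dd_i)}\simeq S^{m_i}\bigl(k(C_i)^{\GL(\dd_i)}/k\bigr).
\]
The idea is that a generic point of $\overline{C_i^{\oplus m_i}}$ is (up to $\GL(m_i\dd_i)$-action) an unordered $m_i$-tuple of generic points of $C_i$, since a generic module in $\overline{C_i^{\oplus m_i}}$ is a direct sum $N_1\oplus\cdots\oplus N_{m_i}$ with each $N_r$ a generic module of $C_i$, and two such sums are isomorphic iff the tuples agree up to permutation (here one uses that the generic modules in an indecomposable component are Schur, or at least that the relevant open subset consists of pairwise non-isomorphic indecomposables, to control $\Hom$ between distinct summands). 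Thus the rational quotient of $\overline{C_i^{\oplus m_i}}$ by $\GL(m_i\dd_i)$ is birational to the $m_i$-th symmetric product of the rational quotient of $C_i$ by $\GL(\dd_i)$, and taking function fields gives exactly $S^{m_i}(k(C_i)^{\GL(\dd_i)}/k)=(\qq((k(C_i)^{\GL(\dd_i)})^{\otimes m_i}))^{S_{m_i}}$. The different blocks $i=1,\dots,n$ involve modules of pairwise distinct dimension vectors $\dd_i$, so there are no isomorphisms mixing blocks, the decomposition $M'\simeq\bigoplus_i N^{(i)}$ into isotypic parts is canonical, and the rational quotient of $C'$ factors as a product of the blockwise quotients; passing to function fields turns the product of varieties into the tensor product $\qq(\bigotimes_{i=1}^n S^{m_i}(k(C_i)^{\GL(\dd_i)}/k))$ of the corresponding fields over $k$.

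I expect the main obstacle to be making the "generic tuple up to permutation" argument precise at the level of rational quotients rather than just on $k$-points — that is, producing an honest birational map between $\overline{C_i^{\oplus m_i}}/\GL(m_i\dd_i)$ and the $m_i$-fold symmetric product $(C_i/\GL(\dd_i))^{m_i}/S_{m_i}$, and checking the fibers are exactly $\GL$-orbits on a suitable dense open set. The key technical input is a clean description of the generic module of $\overline{C_i^{\oplus m_i}}$ and of when two direct sums of generic modules of $C_i$ are isomorphic; this is where one needs the tameness hypothesis and the parameterizing pairs $(\mathcal U_i,f_i)$ from Dowbor–Skowro\'nski, together with Lemma~\ref{dim-count-irr-comp} to pin down stabilizer dimensions. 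Once the block-by-block birational picture is in place, the remaining steps — identifying function fields of symmetric products with $S^m(K/k)$, and function fields of products with tensor products over $k$ — are standard facts about rational quotients and finite group actions, and the universal property of the Rosenlicht quotient recalled above lets one assemble the pieces into the stated isomorphism. This mirrors the strategy of \cite[Proposition 4.7]{CC9}, so I would follow that template, adapting the bookkeeping to allow the extra orbit-closure summands $C_{n+1},\dots,C_l$.
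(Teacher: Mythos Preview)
Your two-stage plan matches the paper's proof exactly: strip off the orbit closures to reduce to $C'=\overline{\bigoplus_{i=1}^n C_i^{\oplus m_i}}$, then invoke \cite[Proposition~4.7]{CC9} for the remaining isotypic blocks. The paper packages the first stage as a separate slice lemma (Lemma~\ref{lemma-orbit-rational-inv}): with $S=C'\times\{M\}$ and $H=\GL(\dd')\times\{\mathbf 1\}$ inside $G=\GL(\dd)$, one checks $\overline{GS}=C$ and $Gs\cap S=Hs$ generically, then the universal property of Rosenlicht's quotient gives $k(C)^{\GL(\dd)}\simeq k(C')^{\GL(\dd')}$---precisely the argument you sketch.

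One correction: you do \emph{not} need tameness, the Dowbor--Skowro\'nski parameterizing pairs, or Schur-ness here. The proposition is stated and proved for an arbitrary bound quiver algebra. The fact that $N_1\oplus\cdots\oplus N_{m_i}\simeq N_1'\oplus\cdots\oplus N_{m_i}'$ (with each $N_r,N_r'$ indecomposable) forces the tuples to agree up to permutation is just Krull--Schmidt for finite-dimensional modules, valid over any $A$. So drop those hypotheses from your second stage; the symmetric-product identification in \cite[Proposition~4.7]{CC9} goes through without them.
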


\begin{proof} Note that we can write 
$$
C=\overline{C' \oplus C''}
$$
where:
\begin{itemize}
\item $C'=\overline{\bigoplus_{i=1}^n C_i^{\oplus m_i}}$ is an irreducible component of $\module(A,\dd')$ with $\dd'=\sum_{i=1}^n \dd(i)$;\\

\item $C''=\overline{\GL(\dd'')M}$ where $\dd''=\sum_{j=n+1}^l \dd(j)$.
\end{itemize}

Now, Proposition 4.7 in \cite{CC9} tells us that 
$$
k(C\rq{})^{\GL(\dd\rq{})} \simeq \qq(\bigotimes_{i=1}^n S^{m_i}(k(C_i)^{\GL(\dd_i)}/k)).
$$

The proof will follow from the lemma below.
\end{proof}

\begin{lemma}\label{lemma-orbit-rational-inv} Assume that $C \subseteq \module(A,\dd)$ can be decomposed as $$C=\overline{C'\oplus C''}$$ where $C' \subseteq \module(A,\dd')$ is a $\GL(\dd')$-invariant irreducible closed subvariety, $C''=\overline{\GL(\dd'')M} \subseteq \module(A,\dd'')$, and $\dd=\dd'+\dd''$. Then,
$$
k(C)^{\GL(\dd)} \simeq k(C')^{\GL(\dd')}.
$$  
\end{lemma}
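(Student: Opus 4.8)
The plan is to show that the direct sum construction, restricted to a suitable dense open subset, realizes a rational quotient map exhibiting $k(C)^{\GL(\dd)}$ as $k(C')^{\GL(\dd')}$. First I would consider the morphism
$$
\Phi : \GL(\dd)\times C' \times C'' \longrightarrow C, \qquad (g, M', M'') \mapsto g\cdot(M'\oplus M''),
$$
whose image is dense in $C$ by definition of the generic decomposition $C = \overline{C'\oplus C''}$. Since $C''=\overline{\GL(\dd'')M}$ is a single orbit closure, the generic module in $C''$ is isomorphic to $M$ itself, so generically the construction only depends on $M'$ up to isomorphism and on the $\GL(\dd)$-action; this already suggests the candidate quotient map $C \dashrightarrow C'/\GL(\dd')$ sending a generic $N \cong M'\oplus M$ to the class of its $C'$-summand $M'$. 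The task is to verify this is well-defined and that its generic fibres are exactly the $\GL(\dd)$-orbits.

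The key steps, in order, would be: (1) Fix a $\GL(\dd')$-invariant dense open $U' \subseteq C'$ on which a rational quotient $\pi' : C' \dashrightarrow C'/\GL(\dd')$ restricts to a morphism with $\pi'^{-1}(\pi'(M'))=\GL(\dd')M'$, as guaranteed by Rosenlicht's theorem. (2) Use the openness of the decomposition (via \cite[Theorem 1.2]{C-BS}, which I may assume) to produce a $\GL(\dd)$-invariant dense open $V \subseteq C$ every module of which decomposes as $M'\oplus M''$ with $M' \in U'$ and $M''\cong M$, and such that moreover the summands $M', M''$ are generically uniquely determined up to isomorphism (this uses that $\dd'$ and $\dd''$ have no common indecomposable summands in their generic decompositions, so no Krull--Schmidt cancellation ambiguity occurs between the two blocks). (3) Define the rational map $\rho : C \dashrightarrow C'/\GL(\dd')$ on $V$ by $\rho(N) = \pi'(M')$ where $N \cong M'\oplus M''$; show it is a well-defined dominant morphism on $V$. (4) Compute the generic fibre of $\rho$: if $\rho(N_1)=\rho(N_2)$ then $M'_1 \cong M'_2$ (since $\pi'$ separates $\GL(\dd')$-orbits on $U'$) and $M''_1\cong M \cong M''_2$, hence $N_1 \cong N_2$, hence $N_1, N_2$ lie in the same $\GL(\dd)$-orbit; conversely $\rho$ is obviously $\GL(\dd)$-invariant. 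By the universal property of the rational quotient recalled in Section \ref{sec:rational-invariants}, $\rho$ induces a birational isomorphism $C/\GL(\dd) \dashrightarrow C'/\GL(\dd')$, which on function fields gives $k(C)^{\GL(\dd)} \simeq k(C')^{\GL(\dd')}$.

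The main obstacle I expect is step (2): making precise that the isomorphism type of the "$C''$-part" and the "$C'$-part" of a generic $N \in C$ is genuinely well-defined, i.e. that there is no cancellation between the indecomposable summands coming from $C'$ and the indecomposable summands of $M$. This is exactly where the hypothesis that $C'$ and $C''$ come from the genuine generic decomposition of $C$ (so their generic roots are disjoint) is used, together with the Krull--Schmidt theorem; one should phrase it as: for $N$ in a suitable dense open of $C$, in the Krull--Schmidt decomposition of $N$ the multiplicities of the indecomposables appearing in (a generic module of) $C'$ versus in $M$ are the correct ones, so the grouping into $M' \oplus M''$ is canonical. Once this bookkeeping is handled, everything else is a routine application of Rosenlicht's theorem and its universal property; note in particular that the $\GL(\dd)$-orbit of $N$ determines, and is determined by, the isomorphism class of $N$, so passing from orbits to isomorphism classes is harmless.
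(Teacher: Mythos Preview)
Your strategy is workable in spirit, but the difficulty is not where you think it is, and the paper takes a cleaner route. The worry you flag in step~(2) is a red herring: the lemma does \emph{not} assume that $C'$ and $C''$ come from the generic decomposition of $C$, so there is no disjointness of generic roots available to you --- but you also do not need it, since Krull--Schmidt cancellation alone gives that $N\cong M'_1\oplus M\cong M'_2\oplus M$ forces $M'_1\cong M'_2$, irrespective of whether $M'$ and $M$ share indecomposable summands. The real gap is step~(3): you define $\rho(N)=\pi'(M')$ only as a map of sets and then assert it is a morphism on $V$. Nothing in your outline produces a \emph{regular} map $C\dashrightarrow C'/\GL(\dd')$; well-definedness on isomorphism classes is a set-theoretic statement, and upgrading it to regularity would require something like a rational section of $\GL(\dd)\times U'\to C$, or a direct verification that $k(C')^{\GL(\dd')}$ pulls back into $k(C)^{\GL(\dd)}$ along $M'\mapsto M'\oplus M$ --- the latter being essentially the conclusion you are after.

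The paper sidesteps this by reversing the direction of the construction. It embeds the slice $S=C'\times\{M\}$ diagonally into $C$, sets $G=\GL(\dd)$ and $H=\GL(\dd')\times\{1\}$, and \emph{restricts} the already-existing rational quotient $\pi:C\dashrightarrow C/G$ to $S$. Restricting a rational map to an irreducible subvariety meeting its domain is automatic, so no regularity issue arises. One then checks (i)~$\overline{G\cdot S}=C$, so $\pi|_S$ is dominant, and (ii)~for generic $s=(M',M)\in S$ one has $Gs\cap S=Hs$, which is exactly the Krull--Schmidt cancellation above. By the universal property of rational quotients, $\pi|_S$ is then the rational quotient of $S\cong C'$ by $H\cong\GL(\dd')$, whence $k(C)^{\GL(\dd)}\cong k(C')^{\GL(\dd')}$.
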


\begin{proof} Fix a decomposition $k^{\dd(v)}=k^{\dd'(v)}\oplus k^{\dd''(v)}$ for all $v \in Q_0$, and then embed $C'\times C''$ diagonally into $C$, and $\GL(\dd')\times\GL(\dd'')$ diagonally into $\GL(\dd)$. Denote $\GL(\dd)$ by $G$, $\GL(\dd')\times \{\textbf{1}\}$ by $H$, and let $S=C'\times \{M\}$. Then, it is easy to see that $S$ is an irreducible $H$-invariant closed subvariety of $C$ such that:

\begin{enumerate}
\item $\overline{GS}=C$;
\item for a generic $s \in S$, we have $Gs \cap S=Hs$. 
\end{enumerate}

Now, let $\pi:C \dashrightarrow C/G$ be the rational quotient of $C$ by $G$. Then, (1) ensures that the restriction $\rho$ of $\pi$ to $S$  is a well-defined dominant rational map, and (2) simply says that for generic $s \in S$, $\rho^{-1}(\rho(s))=Hs$. It follows from the universal property for rational quotients that $\rho$ is the rational quotient of $S$ by $H$, and so $k(C)^{\GL(\dd)}\simeq k(S)^H=k(C')^{\GL(\dd')}$.
\end{proof}

\begin{remark} This lemma tells us that, for the purposes of computing rational invariants, we can always get rid of the orbit closures that occur in a generic decomposition. If, additionally, the other irreducible components that occur in a generic decomposition can be separated as in Proposition \ref{reduction-rational-inv-prop} then we have a further reduction in the fields of rational invariants.
\end{remark}

\section{Moduli spaces of modules}\label{sect:modulispaces} 
Let $A=kQ/I$ be a bound quiver algebra and let $\dd \in \ZZ^{Q_0}_{\geq 0}$ be a dimension vector of $A$. The ring of invariants $\text{I}(A,\dd):= k[\module(A,\dd)]^{\GL(\dd)}$ turns out to be precisely the base field $k$ since $A$ is finite-dimensional. However, the action of the subgroup $\SL(\dd) \subseteq \GL(\dd)$, defined by
$$
\SL(\dd)=\prod_{i \in Q_0}\SL(\dd(i),k),
$$
provides us with a highly non-trivial ring of semi-invariants. Note that any $\theta \in \ZZ^{Q_0}$ defines a rational character $\chi_{\theta}:\GL(\dd) \to k^*$ by $$\chi_{\theta}((g(i))_{i \in Q_0})=\prod_{i \in Q_0}(\det g(i))^{\theta(i)}.$$ In this way, we can identify $\Gamma=\ZZ ^{Q_0}$ with the group $X^\star(\GL(\dd))$ of rational characters of $\GL(\dd)$, assuming that $\dd$ is a
sincere dimension vector. In general, we have only the natural epimorphism $\Gamma \to X^*(\GL(\dd))$. We also refer to the rational characters of $\GL(\dd)$ as (integral) weights of $A$ (or $Q$).

Let us now consider the ring of semi-invariants $\SI(A,\dd):= k[\module(A,\dd)]^{\SL(\dd)}$. As $\SL(\dd)$ is the commutator subgroup of $\GL(\dd)$ and $\GL(\dd)$ is linearly reductive, we have $$\SI(A,\dd)=\bigoplus_{\theta \in X^\star(\GL(\dd))}\SI(A,\dd)_{\theta},
$$
where $$\SI(A,\dd)_{\theta}=\lbrace f \in k[\module(A,\dd)] \mid g f= \theta(g)f \text{~for all~}g \in \GL(\dd)\rbrace$$ is
called \key{the space of semi-invariants} on $\module(A,\dd)$ \key{of weight $\theta$}. For an irreducible component $C \subseteq \module(A,\dd)$, we similarly define the ring of semi-invariants $\SI(C):=k[C]^{\SL(\dd)}$, and the space $\SI(C)_{\theta}$ of semi-invariants on $C$ of weight $\theta \in \ZZ^{Q_0}$.

Following King \cite{K}, an $A$-module $M$ is said to be \emph{$\theta$-semi-stable} if $\theta(\ddim M)=0$ and $\theta(\ddim M')\leq 0$ for all submodules $M' \leq M$. We say that $M$ is \emph{$\theta$-stable} if $M$ is non-zero, $\theta(\ddim M)=0$, and $\theta(\ddim M')<0$ for all submodules $\{0\} \neq M' < M$. We denote by $\module(A)^{ss}_{\theta}$ the full subcategory of $\module(A)$ consisting of the $\theta$-semi-stable modules. It is easy to see that $\module(A)^{ss}_{\theta}$ is a full exact abelian subcategory of $\module(A)$ which is closed under extensions and whose simple objects are precisely the $\theta$-stable modules. Moreover, $\module(A)^{ss}_{\theta}$ is Artinian and Noetherian, and hence every $\theta$-semi-stable $A$-module has a Jordan-H{\"o}lder filtration in $\module(A)^{ss}_{\theta}$.

Now, let us consider the (possibly empty) open subsets
$$\module(A,\dd)^{ss}_{\theta}=\{M \in \module(A,\dd)\mid M \text{~is~}
\text{$\theta$-semi-stable}\}$$
and $$\module(A,\dd)^s_{\theta}=\{M \in \module(A,\dd)\mid M \text{~is~}
\text{$\theta$-stable}\}$$
of $\dd$-dimensional $\theta$(-semi)-stable $A$-modules. Using methods from Geometric Invariant Theory, King showed in \cite{K} that the projective variety
$$
\M(A,\dd)^{ss}_{\theta}:=\Proj(\bigoplus_{n \geq 0}\SI(A,\dd)_{n\theta})
$$
is a GIT-quotient of $\module(A,\dd)^{ss}_{\theta}$ by the action of $\PGL(\dd)$ where $\PGL(\dd)=\GL(\dd)/T_1$ and $T_1=\{(\lambda \Id_{\dd(i)})_{i \in Q_0} \mid \lambda \in k^*\} \leq \GL(\dd)$. Moreover, there is a (possibly empty) open subset $\M(A,\dd)^s_{\theta}$ of $\M(A,\dd)^{ss}_{\theta}$ which is a geometric quotient of $\module(A,\dd)^s_{\theta}$ by $\PGL(\dd)$. We say that $\dd$ is a \emph{$\theta$-(semi-)stable dimension vector} if $\module(A,\dd)^{(s)s}_{\theta} \neq \emptyset$. 


For an irreducible component $C \subseteq \module(A,\dd)$, we similarly define $C^{ss}_{\theta}, C^s_{\theta}$, $\M(C)^{ss}_{\theta}$, and $\M(C)^s_{\theta}$. One then has that the points of 
$\M(C)^s_{\theta}$ correspond bijectively to the isomorphism classes of $\theta$-stable modules in $C$. We say that $C$ is \emph{$\theta$-(semi-)stable} if $C^{(s)s}_{\theta} \neq \emptyset$.

\begin{remark} \label{rmk-fine-moduli-rational-inv} Let $C \subseteq \module(A,\dd)$ be an irreducible component and let $\theta \in \ZZ^{Q_0}$ be so that $C^s_{\theta} \neq \emptyset$. Let $\pi:C \dashrightarrow \M(C)^s_{\theta}$ be the dominant rational morphism which is represented by the geometric quotient morphism $C^s_{\theta} \to \M(C)^s_{\theta}$. Since for any $M \in C^s_{\theta}$, $\pi^{-1}(\pi(M))=\GL(\dd)M$, we conclude that $\M(C)^s_{\theta}$ is the rational quotient of $C$ by $\GL(\dd)$. In particular, we have that $k(C)^{\GL(\dd)}\simeq k(\M(C)^{ss}_{\theta})$.
\end{remark}

A $\theta$-semi-stable irreducible component $C\subseteq \module(A,\dd)$ is said to be \emph{$\theta$-well-behaved} if, whenever $\dd'$ is the dimension vector of a factor of a Jordan-H{\"o}lder filtration in $\module(A)^{ss}_{\theta}$ of a generic $A$-module in $C$ and $C_1,C_2 \subseteq \module(A,\dd')$ are two distinct irreducible components, $C_{1,\theta}^s \cap C_{2,\theta}^s = \emptyset$. (We should point out that this notion is slightly more general than the one in \cite[Section 3.3]{CC10}.)

\begin{remark} It follows from the work of Bobinski and Skowro{\'n}ski in \cite{BS1} that for a tame quasi-tilted algebra, any $\theta$-semi-stable irreducible component is $\theta$-well-behaved.

We will prove in Section \ref{sec:gentle-sec} that the same is true for triangular gentle algebras.
\end{remark}

Let $C$ be a $\theta$-well-behaved irreducible component of $\module(A,\dd)$. We say that
$$
C=C_1\pp \ldots \pp C_l
$$
is \emph{the $\theta$-stable decomposition of $C$} if:
\begin{itemize}
\item the $C_i\subseteq \module(A,\dd_i)$, $1 \leq i \leq l$, are $\theta$-stable irreducible components;
\item the generic $A$-module $M$ in $C$ has a finite filtration $0=M_0\subseteq M_1 \subseteq \dots \subseteq M_l=M$ of submodules such that each factor $M_j/M_{j-1}$, $1 \leq j \leq l$, is isomorphic to a $\theta$-stable module in one the $C_1,\ldots, C_l$, and the sequence $(\ddim M_1/M_0, \ldots, \ddim M/M_{l-1})$ is the same as $(\dd_1,\ldots,\dd_l)$ up to permutation.
\end{itemize}
We call $C_1, \ldots, C_l$ the $\theta$-stable summands of $C$. To prove the existence and uniqueness of the $\theta$-stable decomposition of $C$, first note that the irreducible variety $C^{ss}_{\theta}$ is a disjoint union of sets of the form $\mathcal{F}_{(C_i)_{1 \leq i \leq l}}$, where each $\mathcal{F}_{(C_i)_{1 \leq i \leq l}}$ consists of those modules $M \in C$ that have a finite filtration $0=M_0\subseteq M_1 \subseteq \dots \subseteq M_l=M$ of submodules with each factor $M_j/M_{j-1}$ isomorphic to a $\theta$-stable module in one the $C_i$, $1 \leq i \leq l$. (Note that the $\theta$-well-behavedness of $C$ is used to ensure that the union above is indeed disjoint.) Next, it is not difficult to show that each $\mathcal{F}_{(C_i)_{1 \leq i \leq l}}$ is constructible (see for example \cite[Section 3]{C-BS}). Hence, there is a unique (up to permutation) sequence $(C_i)_{1 \leq i \leq l}$ of $\theta$-stable irreducible components for which $\mathcal{F}_{(C_i)_{1 \leq i \leq l}}$ contains an open and dense subset of $C^{ss}_{\theta}$ (or $C$).

Now, we are ready to state the following reduction result from \cite[Theorem 1.4]{CC10}):

\begin{theorem}\label{theta-stable-decomp-thm} Let $A=kQ/I$ be a bound quiver algebra and let $C \subseteq \module(A,\dd)$ be a  $\theta$-well-behaved irreducible component where $\theta$ is an integral weight of $A$. Let $$C=m_1\cdot C_1 \pp \ldots \pp m_n\cdot C_n $$ be the $\theta$-stable decomposition of $C$ where $C_i \subseteq \module(A,\dd_i)$, $1 \leq i \leq n$, are $\theta$-stable irreducible components and $\dd_i\neq \dd_j$ for all $1 \leq i\neq j \leq n$. Assume that $C$ is a normal variety and $\overline{\bigoplus_{i=1}^n C_i^{\oplus m_i}} \subseteq C$. 
Then,
$$
\mathcal{M}(C)^{ss}_{\theta} \cong S^{m_1}(\mathcal{M}(C_1)^{ss}_{\theta}) \times \ldots \times S^{m_n}(\mathcal{M}(C_n)^{ss}_{\theta}).
$$
\end{theorem}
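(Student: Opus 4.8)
The plan is to realize both sides as Geometric Invariant Theory quotients and to exhibit between them an explicit morphism, induced by direct sum of modules, which I will show is finite and bijective; a normality argument then promotes it to an isomorphism. Throughout I write $G=\GL(\dd)$, $G_i=\GL(\dd_i)$, $\M=\M(C)^{ss}_{\theta}$ and $\M_i=\M(C_i)^{ss}_{\theta}$. Recall that by King's construction $\M$ is the GIT quotient of $C^{ss}_{\theta}=C\cap\module(A,\dd)^{ss}_{\theta}$ by $G$, its closed points being the isomorphism classes of $\theta$-polystable modules (finite direct sums of $\theta$-stable modules) lying in $C$, and likewise for each $\M_i$; the same construction makes sense for any $G$-invariant closed irreducible subvariety of $\module(A,\dd)$.

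First I would reduce to the ``direct-sum locus'' $D:=\overline{\bigoplus_{i=1}^n C_i^{\oplus m_i}}\subseteq\module(A,\dd)$, which by hypothesis is contained in $C$ and, being the closure of the image of the irreducible variety $\prod_i C_i^{\times m_i}$ under the direct-sum morphism, is a $G$-invariant closed irreducible subvariety. Since $G$ is reductive, $k[C]\onto k[D]$ is surjective on each $G$-isotypic component, hence $\bigoplus_{m\geq 0}\SI(C)_{m\theta}\onto\bigoplus_{m\geq 0}\SI(D)_{m\theta}$, and applying $\Proj$ gives a closed immersion $\M(D)^{ss}_{\theta}\hookrightarrow\M$. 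This closed immersion is surjective: for generic $M\in C^{ss}_{\theta}$ the associated graded of its $\theta$-stable decomposition filtration is $\theta$-semistable, $S$-equivalent to $M$, and lies in $\bigoplus_i C_i^{\oplus m_i}\subseteq D$; so the image of $\M(D)^{ss}_{\theta}$ contains a dense subset of $\M$ and, being closed, equals $\M$. As both varieties are reduced, $\M\cong\M(D)^{ss}_{\theta}$, and it remains to identify $\M(D)^{ss}_{\theta}$ with $\prod_{i=1}^n S^{m_i}(\M_i)$.

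Next I would build the comparison morphism. Fixing splittings $k^{\dd(v)}=\bigoplus_{i,j}k^{\dd_i(v)}$, direct sum defines a $\prod_i G_i^{\times m_i}$-equivariant morphism $\prod_i\module(A,\dd_i)^{\times m_i}\to\module(A,\dd)$ whose image has closure $D$; restricting to $\theta$-semistable loci, passing to GIT quotients (functoriality of $\Proj$ under the induced map of graded semi-invariant rings), and symmetrizing (direct sum is commutative) yields
$$
\Phi\colon\prod_{i=1}^n S^{m_i}(\M_i)\longrightarrow\M(D)^{ss}_{\theta},\qquad\bigl(\{[P_{i1}],\dots,[P_{i,m_i}]\}\bigr)_i\longmapsto\Bigl[\textstyle\bigoplus_{i,j}P_{ij}\Bigr].
$$
Surjectivity is easy: a closed point of $\M(D)^{ss}_{\theta}$ is represented by a $\theta$-polystable $M\in D$, hence a limit of modules $\bigoplus_{i,j}N_{ij}$ with $N_{ij}\in C_i$ ($\theta$-stable for generic choices), so $[M]$ lies in the image of $\Phi$, which is closed since the source is projective. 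For injectivity I would invoke Krull--Schmidt: from $\bigoplus_{i,j}P_{ij}\cong\bigoplus_{i,j}P'_{ij}$ the multisets of indecomposable ($=\theta$-stable) summands coincide, and — using the distinctness of the $\dd_i$ together with the $\theta$-well-behavedness of $C$ to pin down the irreducible component to which a $\theta$-stable summand of a given dimension vector belongs — one recovers each block $\{[P_{ij}]\}_j$, so the two points of $\prod_i S^{m_i}(\M_i)$ agree. Finally $\Phi$ is a bijective morphism of projective varieties, hence finite; since $\M(D)^{ss}_{\theta}\cong\M$ is normal ($C$ is normal and GIT quotients of normal varieties are normal), and in characteristic zero a finite bijective morphism onto a normal variety is an isomorphism, $\Phi$ is an isomorphism, and combining with the previous step gives $\M(C)^{ss}_{\theta}\cong\prod_{i=1}^n S^{m_i}(\M(C_i)^{ss}_{\theta})$.

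The hard part will be the injectivity of $\Phi$ along the strictly $\theta$-semistable boundary: a point of $\M_i$ may be represented by a $\theta$-polystable module whose $\theta$-stable summands have dimension vectors strictly smaller than $\dd_i$, so the combined multiset of $\theta$-stable summands of $\bigoplus_{i,j}P_{ij}$ need not visibly remember the blocking by $(i,j)$, and one must show that this blocking is nonetheless unique. My approach would be to first prove generic injectivity on the dense open locus where every $P_{ij}$ is genuinely $\theta$-stable of dimension vector exactly $\dd_i$ — available because $C_{i,\theta}^{s}$ is dense in $C_i$ — and then upgrade to a global bijection via properness of $\Phi$ and normality of $\M(D)^{ss}_{\theta}$; the subtle point there is to exclude positive-dimensional fibers of $\Phi$ over the boundary before Zariski's main theorem can be applied.
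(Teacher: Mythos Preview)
The paper does not actually prove this theorem: it is quoted as \cite[Theorem 1.4]{CC10} and used as a black box, so there is no in-paper argument to compare against. On its own merits your outline follows the natural strategy, and the first reduction --- replacing $C$ by $D=\overline{\bigoplus_i C_i^{\oplus m_i}}$ via the surjective closed immersion $\M(D)^{ss}_{\theta}\hookrightarrow\M(C)^{ss}_{\theta}$ of reduced projective schemes --- is correct as written.

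The genuine gap is exactly the one you isolate in your last paragraph, and your proposed repair does not close it. Your Krull--Schmidt argument for injectivity of $\Phi$ is valid only on the open locus where every $P_{ij}$ is honestly $\theta$-stable of dimension vector $\dd_i$; once some $P_{ij}\in\M_i$ is represented by a strictly polystable module, the total multiset of $\theta$-stable summands of $\bigoplus_{i,j}P_{ij}$ may admit more than one regrouping into blocks of dimension vectors $\dd_1,\dots,\dd_n$ lying in $C_1,\dots,C_n$, and neither $\theta$-well-behavedness (which only pins down the component of a $\theta$-stable module at a \emph{given} dimension vector) nor the pairwise distinctness of the $\dd_i$ forbids this. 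The fix you sketch --- generic injectivity gives birationality, then properness plus normality of the target --- yields via Zariski's Main Theorem only that the fibers of $\Phi$ are \emph{connected}, not that $\Phi$ is an isomorphism; you still owe precisely the exclusion of positive-dimensional fibers that you yourself call ``the subtle point'', and nothing in the proposal supplies it. As it stands this is a correct plan with an honestly identified but unresolved hole; to complete it you would need either a direct fiberwise finiteness argument or a comparison at the level of the graded rings $\bigoplus_{m\geq 0}\SI(D)_{m\theta}$ and $\bigotimes_i\bigl(\bigoplus_{m\geq 0}\SI(C_i)_{m\theta}\bigr)^{\otimes m_i,\,S_{m_i}}$ that bypasses pointwise bijectivity altogether.
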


Note that this reduction result allows us to ``break'' a moduli space of modules into smaller ones. We show next that these smaller moduli spaces are rather well behaved, at least in the tame case.

\begin{prop}\label{prop:tamerationality} Let $A=kQ/I$ be a tame bound quiver algebra, $\dd$ a generic root of $A$, and $C \subseteq \module(A,\dd)$ an indecomposable irreducible component. Then, the following statements hold.
\begin{enumerate}
\item For any weight $\theta \in \ZZ^{Q_0}$ with $C^{ss}_{\theta} \neq \emptyset$, $\M(C)^{ss}_{\theta}$ is either a point or projective curve.
\item If $\theta \in \ZZ^{Q_0}$ is so that $C^s_{\theta} \neq \emptyset$ then $\M(C)^{ss}_{\theta}$ is rational. If, in addition, $C$ is normal then $\M(C)^{ss}_{\theta}$ is either a point or $\mathbb P^1$.
\item If $C$ is a Schur component then $k(C)^{\GL(\dd)}$ is a rational field of transcendence degree at most one. 
\end{enumerate}
\end{prop}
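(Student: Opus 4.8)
\emph{Strategy.} Everything rests on the tame dichotomy recalled above --- an indecomposable irreducible component $C$ is either an orbit closure $\overline{\GL(\dd)M}$ or one of the closures $\mathcal F_i=\overline{\bigcup_{\lambda\in\mathcal U_i}\GL(\dd)f_i(\lambda)}$ attached to a parameterizing pair $(\mathcal U_i,f_i)$ --- together with Lemma~\ref{dim-count-irr-comp} and the transcendence-degree formula in the remark following it. For (1): $\M(C)^{ss}_\theta=\Proj\big(\bigoplus_{n\geq 0}\SI(C)_{n\theta}\big)$ is a projective variety, and it is integral since $\bigoplus_{n\geq 0}\SI(C)_{n\theta}$ is a subring of the domain $k[C]$ and $C^{ss}_\theta$ is a nonempty open subset of the irreducible $C$. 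Any rational function on $\M(C)^{ss}_\theta$ is a ratio of two semi-invariants of a common weight $n\theta$, hence is $\GL(\dd)$-invariant, so $k(\M(C)^{ss}_\theta)$ embeds into $k(C)^{\GL(\dd)}$. Therefore
$$
\dim \M(C)^{ss}_\theta=\trdeg_k k(\M(C)^{ss}_\theta)\leq \trdeg_k k(C)^{\GL(\dd)}\leq 1,
$$
the last inequality being exactly the remark after Lemma~\ref{dim-count-irr-comp}; a projective variety of dimension at most one is a point or a projective curve.

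For (2), assume $C^s_\theta\neq\emptyset$. By Remark~\ref{rmk-fine-moduli-rational-inv}, $k(\M(C)^{ss}_\theta)\cong k(C)^{\GL(\dd)}$, so it suffices to show $k(C)^{\GL(\dd)}$ is purely transcendental over $k$. If $C=\overline{\GL(\dd)M}$ the generic orbit is dense and $k(C)^{\GL(\dd)}=k$. If $C=\mathcal F_i$, compose the dominant action morphism $\mu\colon\GL(\dd)\times\mathcal U_i\to C$ with the rational quotient $\pi\colon C\dashrightarrow C/\GL(\dd)$; since $\pi$ is $\GL(\dd)$-invariant, $\pi\circ\mu$ factors through the projection onto $\mathcal U_i$, giving a dominant rational map $\overline\mu\colon\mathcal U_i\dashrightarrow C/\GL(\dd)$. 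Over a general point $\pi(f_i(\lambda))$ the fibre of $\pi$ is the single orbit $\GL(\dd)f_i(\lambda)$ (Rosenlicht), and the defining property of a parameterizing pair gives $f_i(\lambda)\not\simeq f_i(\lambda')$ for $\lambda\neq\lambda'$; hence $\overline\mu$ is generically injective, and a dominant generically injective rational map of curves over an algebraically closed field of characteristic zero is birational. So $C/\GL(\dd)$ is birational to the rational curve $\mathcal U_i\subseteq\mathbb A^1$ and $k(C)^{\GL(\dd)}\cong k(t)$. In either case $\M(C)^{ss}_\theta$ is rational. If moreover $C$ is normal, so is its open subset $C^{ss}_\theta$, and a projective GIT quotient of a normal variety by a reductive group is normal (on each affine chart of the Proj one takes invariants of a reductive group on an integrally closed domain); thus $\M(C)^{ss}_\theta$ is a normal --- hence smooth --- rational projective variety of dimension at most one, i.e. a point or $\mathbb P^1$.

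For (3): the bound $\trdeg_k k(C)^{\GL(\dd)}\leq 1$ is once more the remark after Lemma~\ref{dim-count-irr-comp} (for a Schur component $\min_{M\in C}\dim_k\End_A(M)=1$, so the formula collapses to $\dim C-\dim\GL(\dd)+1\in\{0,1\}$). Rationality is immediate from the same dichotomy: if $C$ is an orbit closure then $k(C)^{\GL(\dd)}=k$, and if $C=\mathcal F_i$ the birational map $\mathcal U_i\dashrightarrow C/\GL(\dd)$ built in (2) gives $k(C)^{\GL(\dd)}\cong k(t)$.

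The step I expect to require the most care is the birational claim underlying (2) and (3): one must rule out that $\overline\mu$ is only generically finite of degree $>1$. For this one combines Rosenlicht's $\GL(\dd)$-invariant dense open $C_0\subseteq C$, on which fibres of $\pi$ are exactly the orbits, with the finiteness of $\mathcal U_i\setminus f_i^{-1}(C_0)$ to verify that \emph{every} preimage under $\overline\mu$ of a general point of $C/\GL(\dd)$ lies in $f_i^{-1}(C_0)$, so the parameterizing-pair property forces a single preimage; separability in characteristic zero then upgrades generic injectivity to birationality. The normality assertion in (2) is routine by comparison.
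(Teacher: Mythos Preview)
Your proof is correct. Parts (1) and (2) parallel the paper's argument closely, though you phrase the dimension bound in (1) via the embedding $k(\M(C)^{ss}_\theta)\hookrightarrow k(C)^{\GL(\dd)}$ rather than via the fibre-dimension inequality for the quotient morphism, and in (2) you route the rationality through Remark~\ref{rmk-fine-moduli-rational-inv} and the Rosenlicht quotient, whereas the paper builds the injective map $\mathcal U_0\to\M(C)^{ss}_\theta$ directly and uses closedness of stable orbits. These are minor reformulations of the same idea.

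Part (3), however, is a genuinely different and more elementary route. The paper uses the Schur hypothesis in an essential way: it argues that a Schur component contains infinitely many non-isomorphic Schur modules, invokes Crawley-Boevey's result \cite[Theorem~D]{CB5} to produce a \emph{homogeneous} Schur module $M$, cites \cite[Lemma~11]{CC12} to conclude that $M$ is $\theta^M$-stable for a specific weight $\theta^M$, and then applies part (2) and Remark~\ref{rmk-fine-moduli-rational-inv}. Your argument bypasses all of this: you observe that the birational map $\overline\mu:\mathcal U_i\dashrightarrow C/\GL(\dd)$ constructed in (2) depends only on the parameterizing pair and on Rosenlicht's theorem, not on any stability hypothesis, so rationality of $k(C)^{\GL(\dd)}$ follows for \emph{every} indecomposable irreducible component of a tame algebra, Schur or not. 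This is strictly stronger than the stated (3) and avoids two nontrivial external citations; the price is the care you rightly flag in checking that $f_i^{-1}(C_0)$ is nonempty open, which is immediate from the $\GL(\dd)$-invariance of the Rosenlicht open set $C_0$ and the dominance of the action map.
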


\begin{proof} If $C$ is an obit closure then $(1)-(3)$ are obviously true. Otherwise, we have seen in Section \ref{sect:modvar} that
$$C=\overline{\bigcup_{\lambda \in \mathcal U}\GL(\dd)f(\lambda)}$$
where $(\mathcal U\subseteq k^*,f:\mathcal U \to C)$ is a parameterizing pair.

$(1)$ Let $\theta \in \ZZ^{Q_0}$ be an integral weight and 
$
\pi:C^{ss}_{\theta}\to \M(C)^{ss}_{\theta}
$
be the quotient morphism. Then, for a generic module $M_0 \in C^{ss}_{\theta}$, we have:
$$
\dim C-\dim \M(C)^{ss}_{\theta}=\dim \pi^{-1}(\pi(M_0))\geq \dim \overline{\GL(\dd)M_0}.
$$
So, $\dim \M(C)^{ss}_{\theta} \leq \dim C-\dim \GL(\dd)+\min \{\dim_k \End_A(M)\mid M \in C\}$, which is at most $1$ by Lemma \ref{dim-count-irr-comp}. This proves $(1)$.

$(2)$ Choose an non-empty open subset $X_0 \subseteq C$ such that $X_0 \subseteq \bigcup_{\lambda \in \mathcal U} \GL(\dd)f(\lambda) \cap C^s_{\theta}$. We can certainly assume that $X_0$ is $\GL(\dd)$-invariant since otherwise we can simply work with $\bigcup_{g \in \GL(\dd)}gX_0$. It is then easy to see that $\mathcal U_0:=\{\lambda \in \mathcal U \mid f(\lambda) \in X_0\}$ is a non-empty open subset of $\mathcal U$. 

Now, let $\varphi:\mathcal U_0 \to \M(C)^{ss}_{\theta}$ be the morphism defined by $\varphi(\lambda)=\pi(f(\lambda)), \forall \lambda \in \mathcal U_0$. Notice that $\varphi$ is injective. Indeed, let $\lambda_1, \lambda_2 \in \mathcal U_0$ be so that $\varphi(\lambda_1)=\varphi(\lambda_2)$. Then, $\pi(f(\lambda_1))=\pi(f(\lambda_2))$ which is equivalent to $\overline{GL(\dd)f(\lambda_1)}\cap \overline{\GL(\dd)f(\lambda_2)}\cap C^{ss}_{\theta}\neq \emptyset$. Since $f(\lambda_1)$ and $f(\lambda_2)$ are both $\theta$-stable, we know that their orbits are closed in $C^{ss}_{\theta}$. It is now clear that $f(\lambda_1)\simeq f(\lambda_2)$ which implies that $\lambda_1=\lambda_2$. 

The injectivity of $\varphi$ together with the fact that $\dim \mathcal U_0=1$ and $\dim \M(C)^{ss}_{\theta} \leq 1$ implies that $\varphi$ is an injective dominant morphism, and hence is birational. This shows that $\M(C)^{ss}_{\theta}$ is a rational projective curve.

 If, in addition, $C$ is normal then so is any good quotient of $C$. In particular, under this extra assumption, $\M(C)^{ss}_{\theta}$ is a rational normal projective curve, i.e. $\M(C)^{ss}_{\theta}$ is precisely $\mathbb P^1$. 

(3) Notice that $\{\lambda \in \mathcal U \mid \dim_k \End_A(f(\lambda))=\min \{\dim_k \End_A(M)\mid M \in C\}\}$ is a non-empty open subset of $\mathcal U$. From this we deduce that $C$ contains infinitely many, pairwise non-isomorphic, Schur $A$-modules. Hence, $C$ must contain a homogenous Schur $A$-module $M$ by \cite[Theorem D]{CB5}. It has been proved in \cite[Lemma 11]{CC12} that $M$ is $\theta^M$-stable where $\theta^M$ is a specific integral weight associated to $M$ (see also Section \ref{proofs-sec}). Using Remark \ref{rmk-fine-moduli-rational-inv} and part $(2)$, we can now see that
$$
k(C)^{\GL(\dd)}\simeq k(\M(C)^{ss}_{\theta^M})\simeq k(t).
$$
\end{proof}

\begin{remark} As a consequence of the proposition above and Theorem \ref{theta-stable-decomp-thm},  we obtain that for a tame algebra $A$ and an irreducible component $C \subseteq \module(A,\dd)$ for which Theorem \ref{theta-stable-decomp-thm} is applicable, the moduli space $\M(C)^{ss}_{\theta}$ is a rational variety.
\end{remark}

\section{Acyclic gentle algebras}\label{sec:gentle-sec}

Gentle algebras are a particularly well-behaved class of string algebras that have recently enjoyed a resurgence in popularity, primarily due to their appearance in the study of cluster algebras arising from unpunctured surfaces (see \cite{ABCP}). String algebras are (non-hereditary) tame algebras whose indecomposable modules can be parameterized by certain walks on the underlying quiver $Q$ not passing through the ideal $I$, and whose irreducible morphisms can be described by operations on these walks \cite{BR}.  The invariant theory for a particular class of triangular gentle algebras was first studied in detail by Kra\'{s}kiewicz and Weyman \cite{Kraskiewicz:2011aa}, and then by the second author in \cite{Carroll1, Carroll2, Carroll:Thesis}. In these latter works, the irreducible components of triangular gentle algebras are determined. Their rings of semi-invariants are shown to be semigroup rings, and the generic modules in irreducible components are constructed. This allowed for the calculation of the quotients $\M(C)^{ss}_\theta$ for some special choices of $C$ and $\theta$.  

A bound quiver algebra $kQ/I$ is called a \emph{gentle algebra} if the following properties hold:
\begin{enumerate}
\item for each vertex $i\in Q_0$ there are at most two arrows with
  head $i$, and at most two arrows with tail $i$;
\item for any arrow $b\in Q_1$, there is at most one arrow $a\in Q_1$ and at most one arrow $c\in Q_1$ such that $ab\notin I$ and $bc\notin I$;
\item for each arrow $b\in Q_1$ there is at most one arrow $a\in Q_1$ with $ta=hb$ (resp. at most one arrow $c\in Q_1$ with $hc=tb$) such that $ab\in I$ (resp. $bc\in I$);
\item $I$ is generated by paths of length 2.
\end{enumerate}
The bound quiver algebra $kQ/I$ is called a \key{gentle algebra}.  

In \cite{Carroll1}, colorings of a quiver were introduced to understand the module varieties of triangular gentle algebras. A \key{coloring} $c$ of a quiver $Q$ is a surjective map $c: Q_1 \rightarrow S$, where $S$ is some finite set whose elements we call colors, such that $c^{-1}(s)$ is a directed path for all $s\in S$. Given a coloring $c$ of $Q$, define the coloring ideal $I_c$ to be the two-sided ideal in $kQ$ generated by monochromatic paths of length two, i.e. $I_c=\langle ba \mid c(b)=c(a) \textrm{ and } ha=tb\rangle$.  

\begin{prop}{\cite[Proposition 2.6]{Carroll1}}
If $k Q/I$ is a triangular gentle algebra, then there is a coloring $c$ of $Q$ such that $I=I_c$.
\end{prop}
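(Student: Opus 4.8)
The plan is to exhibit a coloring $c$ of $Q$ directly from the gentleness axioms and verify that $I_c = I$. Recall that the defining property we must arrange is that each color class $c^{-1}(s)$ is a directed path, and that the monochromatic length-two relations generate exactly $I$. Since $A = kQ/I$ is gentle, axiom (4) tells us $I$ is generated by a set $\rho$ of paths of length $2$; write each such relation as $ba$ with $ha = tb$. The idea is to build an equivalence relation on $Q_1$ whose classes are the desired color classes: declare two arrows $a,b$ with $ha = tb$ to be \emph{composable-adjacent} if $ba \notin I$, and let $\sim$ be the equivalence relation generated by composable-adjacency. I would then define the color of an arrow to be its $\sim$-class, and $c : Q_1 \to Q_1/{\sim}$ the quotient map.

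The key steps, in order, are as follows. First, I would check that each $\sim$-class is a directed path in $Q$. The content here is exactly axioms (1) and (2): axiom (2) says that for a fixed arrow $b$ there is at most one arrow $a$ with $ab \notin I$ and at most one arrow $c$ with $bc \notin I$, so within a $\sim$-class each arrow has at most one ``composable-adjacent'' successor and at most one predecessor; axiom (1) (at most two arrows in/out of each vertex) together with triangularity (no oriented cycles) then forces the class to be a single non-self-intersecting directed path rather than a branching configuration or a cycle. Second, I would verify $I_c \subseteq I$: a generator of $I_c$ is a monochromatic path $ba$ with $c(a) = c(b)$ and $ha = tb$; I need $ba \in I$. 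Here the point is that if $ba \notin I$, then $a$ and $b$ would be composable-adjacent \emph{and} lie in the same class, which — combined with step one, where consecutive arrows of the path are already composable-adjacent — would only be consistent if $ba$ is in fact the adjacency edge, i.e. $b$ is the successor of $a$ in the path; axiom (3) (at most one arrow $c$ with $hc = tb$ giving $bc \in I$, versus axiom (2) giving at most one with $bc \notin I$) is what rules out the pathological overlap and pins down that $a,b$ consecutive in a color path forces $ba \in I$. Third, I would verify $I \subseteq I_c$: a generator of $I$ is a relation $ba$ of length two with $ha = tb$; I must show $c(a) = c(b)$, i.e. $a \sim b$. But $ba \in I$ means $a$ and $b$ are \emph{not} composable-adjacent, so this requires a separate argument — one uses axiom (3) to produce the ``other'' arrow through the vertex $ha = tb$ and argues that $a$ and $b$ must be forced into the same color class by a chain of composable-adjacencies through the two-valent structure at that vertex, or else one gets a contradiction with triangularity.

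The main obstacle I expect is the third step, $I \subseteq I_c$: it is the direction where the combinatorics is genuinely delicate, because a single relation $ba \in I$ gives \emph{negative} information (non-composability), and one must convert this into the positive statement $a \sim b$ using the local two-arrows-in/two-arrows-out geometry at the vertex $ha$, the ``at most one'' clauses of axioms (2) and (3), and acyclicity to prevent the chain of implications from closing up into a cycle. A clean way to organize this is a small case analysis on how many arrows have head $ha$ (one or two) and how many have tail $tb$ (one or two): in the ``thin'' cases the identification is immediate, and in the ``two and two'' case axiom (3) supplies precisely the complementary non-relation that links $a$ to $b$ through the fourth arrow. Once these three inclusions and the path property are in place, $c$ is a coloring with $I_c = I$ and the proposition follows. (If the write-up prefers, steps two and three can be packaged together as: two arrows forming a length-two path are in the same color class if and only if their composite lies in $I$ — this biconditional is exactly $I = I_c$.)
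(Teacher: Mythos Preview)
The paper does not prove this proposition; it is quoted from \cite[Proposition~2.6]{Carroll1} without argument, so there is no in-paper proof to compare against. Nonetheless, your construction contains a genuine error.

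You declare $a,b$ (with $ha=tb$) to lie in the same colour class when $ba\notin I$. This is backwards: $I_c$ is generated by the \emph{monochromatic} length-two paths, so to obtain $I\subseteq I_c$ every relation $ba\in I$ must satisfy $c(a)=c(b)$, whereas your rule makes such a pair \emph{non}-adjacent. The minimal counterexample is $Q:1\xrightarrow{a}2\xrightarrow{b}3$ with $I=(ba)$, which is triangular gentle; under your convention $a\not\sim b$, the two arrows receive different colours, and $I_c=0\neq I$. Your step~3 flags exactly this difficulty and proposes to link $a$ to $b$ by a chain of non-relations through other arrows at the vertex $ha$, but in this example there are no other arrows, so no such chain exists. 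Step~2 collapses for the same reason: with your adjacency, consecutive arrows of a colour-path satisfy $ba\notin I$ by definition, so one gets $I_c\cap I=0$ rather than $I_c\subseteq I$.

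The repair is simply to swap the convention: make $a,b$ adjacent precisely when $ha=tb$ and $ba\in I$. Axiom~(3) --- not axiom~(2) --- then guarantees at most one successor and one predecessor in the adjacency graph, so each class is a directed walk, and triangularity rules out cycles. Consecutive arrows in a class now have composite in $I$ by construction, and the acyclicity argument you sketched (distinct vertices along a directed path) shows these are the only monochromatic length-two paths, giving $I_c\subseteq I$; conversely every length-two generator of $I$ is a defining adjacency and hence monochromatic, so $I\subseteq I_c$. With this correction the outline goes through cleanly and the case analysis you anticipated in step~3 disappears.
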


Let $c$ be a coloring of $Q$, $A=kQ/I_c$, and $\dd$ a dimension vector of $A$. It turns out that one can exhibit all irreducible components of $\module(A, \dd)$ by introducing the concept of a rank function.  A \key{rank function} is a map $\rr:Q_1\rightarrow \NN$ satisfying the inequalities $$\rr(a)+\rr(b)\leq \dd(i)$$ whenever $ha=tb=i$ and $c(a)=c(b)$ (with the degenerate inequalities $\rr(a)\leq \dd(i)$ or $\rr(b)\leq \dd(i))$ when either $b$ or $a$ fails to exist).  The rank function $\rr$ is called \key{maximal} if it is so under the coordinate-wise partial order (namely $\rr'\leq \rr$ if and only if $\rr'(a)\leq \rr(a)$ for all $a\in Q_1$). Note that a rank function $\rr$ defines a closed subvariety of $\module(A,\dd)$:
$$\module(A, \dd, \rr):=\{V\in \module(A,\dd)\mid \rk V(a)\leq \rr(a), \forall a\in Q_1\}.$$  
It was shown in \cite{Carroll1} that every irreducible component of
$\module(A, \dd)$ is of the form $\module(A, \dd, \rr)$ for $\rr$
maximal.  These varieties can then be viewed as products of varieties of complexes taken along each colored path.  DeConcini-Strickland \cite{DeConcini-Strickland} showed that varieties of complexes are normal, and thus $\module(A, \dd, \rr)$ is normal for any choice of $\dd,\rr$. 

\subsection{Up and down graphs}
We now focus exclusively on the case in which $A=kQ/I_c$ is a
triangular gentle algebra.  Under this restriction, each vertex has at
most two colors incident to it.  Denote by $\XX$ the set of pairs $(i,
s)\in Q_0\times S$ such that $s$ is a color incident to $i$.  A \key{sign function} is a map $\epsilon: \XX \rightarrow \{\pm 1\}$ satisfying the property that $\epsilon(i, s) = -\epsilon(i, s')$ if $(i, s), (i, s')$ are distinct elements of $\XX$.  

The \key{up and down graph} $\Gamma(Q, c, \dd, \rr, \epsilon)$ is the
directed graph with vertices $\{v_j^{i}\mid i\in Q_0, j=1,\dotsc, \dd(i)\}$ and arrows $\{f_j^{a}\mid a\in Q_1,\ j=1,\dotsc, \rr(a)\}$ so that 
\begin{align*}
tf_j^{a} &= \begin{cases} v_{j}^{ta} & \textrm{ if } \epsilon(ta, c(a))=\phantom{-}1\\ v_{\dd(ta)-j+1}^{ta} & \textrm{ if } \epsilon(ta, c(a))=-1\end{cases}\\
hf_j^{a} &= \begin{cases} v_{\dd(ha)-j+1}^{ha} & \textrm{ if } \epsilon(ha, c(a))=\phantom{-}1\\ v_j^{ha} & \textrm{ if } \epsilon(ha, c(a))=-1\end{cases}
\end{align*}

There is an obvious morphism of quivers $\pi: \Gamma(Q, c, \dd, \rr, \epsilon) \rightarrow Q$ with $\pi(v_j^i) = i$ and $\pi(f_j^a) = a$.  This, in turn, gives rise to a pushforward map $\pi_*: \rep(\Gamma(Q, c, \dd, \rr, \epsilon)) \rightarrow \rep(Q)$ where if $V=(V(v_j^i), V(f_j^a))\in \rep(\Gamma(Q, c, \dd, \rr, \epsilon))$, then 
\begin{align*}
\pi_*(V)(i) &:= \bigoplus\limits_{j=1,\dotsc, \dd(i)} V(v_j^i)\\
\pi_*(V)(a) &:=\sum\limits_{j=1,\dotsc, \rr(a)} V(f_j^a)
\end{align*}
Notice that since $\rr$ is a rank function, by definition of $\Gamma(Q, c,\dd,\rr,\epsilon)$, $\pi_*(V)(b)\circ
\pi_*(V)(a)=0$ whenever $ha=tb$ and $c(a)=c(b)$.  Therefore the image
of $\pi_*$ lies in $\module(A)$.  Furthermore, each vertex in $\Gamma$ is incident
to at most two arrows, and so the connected components of  $\Gamma(Q,
c, \dd, \rr,\epsilon)$ are either chains or cycles (components which
we refer to as \key{strings} and \key{bands}, respectively).  Let $B$ be the set of band components in $\Gamma$, and for $b\in B$ choose a vertex $\Theta(b)$ contained in $b$.  For each
$\underline{\lambda} =(\lambda_b)_{b\in B} \in k^B$, the representation $\widetilde{M}(Q, c, \dd,
\rr,\epsilon, \underline{\lambda})$ of $\Gamma(Q, c, \dd, \rr, \epsilon)$
is defined to be the representation with $\widetilde{M}(v_j^i)=k$ for all $v_j^i\in \Gamma(Q, c,
\dd, \rr, \epsilon)_0$ with the action of the arrows
given by
\begin{align*}
\widetilde{M}(f_j^a)= \begin{cases} \lambda_b & \textrm{ if } hf_j^a=\Theta(b) \textrm{ and } \epsilon(ha, c(a))=-1\\ 
1 & \textrm{ otherwise}\end{cases} 
\end{align*}
The \key{up and down module} $M(Q, c, \dd, \rr, \epsilon,
\underline{\lambda})$ is defined to be $\pi_*(\widetilde{M}(Q, c, \dd,
\rr, \epsilon, \underline{\lambda}))$.  When the quiver $Q$ and coloring
$c$ are understood from context, we will simply write
$\Gamma(\dd,\rr,\epsilon)$ and $M(\dd, \rr,\epsilon, \underline{\lambda})$.

\begin{example}
  \label{ex:up-down-graphs}
Consider the following bound quiver $(Q, I_c)$ with the coloring as indicated by the type of arrow.
  \begin{align*}
    \xymatrix@C=3cm@R=2cm{
    1 \ar[r]^{r_1} \ar@{..>}[dr]_<<<<<<{g_1} & 2 \ar[r]^{r_2}
  \ar@{-->}[dr]_<<<<<<<{p_2} & 3 \\ 4 \ar@{-->}[ur]^<<<<<<<{p_1}
  \ar@{~>}[r]_{b_1} & 5 \ar@{..>}[ur]^<<<<<<<{g_2} \ar@{~>}[r]_{b_2} & 6}
\end{align*}

We consider the dimension vector $\dd$ and rank function $\rr$ indicated in the diagram below, with dimensions in boxes and ranks as decorations of the arrows.
\begin{align*}
\xymatrix@C=3cm@R=2cm{
    *+[F]{3} \ar[r]^{3} \ar@{..>}[dr]_<<<<<<{2} & *+[F]{4} \ar[r]^{1}
  \ar@{-->}[dr]_<<<<<<<{2} & *+[F]{1} \\ *+[F]{2} \ar@{-->}[ur]^<<<<<<<{2}
  \ar@{~>}[r]_{2} & *+[F]{3} \ar@{..>}[ur]^<<<<<<<{1} \ar@{~>}[r]_{1} & *+[F]{2}}
  \end{align*}
  
Take $\epsilon$ as indicated, where we place $\epsilon(x, c(a))$ on the arrow $a$ near the vertex $x$:

\begin{align*}
    \xymatrix@C=4cm@R=2cm{
    1 \ar[r]^<<<<<<{-}^>>>>>>>{-} \ar@{..>}[dr]_<<<<<<{+}_>>>>>>>>>{-} & 2 \ar[r]^<<<<<{-}^>>>>>>{+}
  \ar@{-->}[dr]_<<<<<<<{+}_>>>>>>>{+} & 3 \\ 4 \ar@{-->}[ur]^<<<<<<<{-}^>>>>>>>{+}
  \ar@{~>}[r]_<<<<<<{+}_>>>>>>{+} & 5 \ar@{..>}[ur]^<<<<<<<{-}^>>>>>>{-} \ar@{~>}[r]_<<<<<<{+}_>>>>>>{-} & 6}
\end{align*}
  
The up and down graph $\Gamma(\dd, \rr, \epsilon)$ consists of one band component and one string component.  One possible choice of $\Theta(b)$ for the unique band is the vertex framed by a circle here:
  \begin{equation*}
\xymatrix@R=.05in@C=.95in{
v_1^{(1)} \ar@[|(2)]@{-}@{->}[ddrr] \ar@[|(2)]@{-}@{..>}[dddddrr]&& {v_1^{(2)}} \ar@[|(9)]@{-}@{-->}[ddddddrr] && v_1^{(3)} \\
v_2^{(1)} \ar@[|(2)]@{-}@{->}[rr] \ar@[|(2)]@{-}@{..>}[dddddrr] && v_2^{(2)} \ar@[|(2)]@{-}@{-->}[ddddrr] && \\
{v_3^{(1)}} \ar@[|(9)]@{-}@{->}[uurr] && v_3^{(2)}  && \\
		&&		v_4^{(2)} \ar@[|(2)]@{-}@{->}[uuurr] && \\
&&&&\\
v_1^{(4)}\ar@[|(2)]@{-}@{~>}[ddrr] \ar@[|(2)]@{-}@{-->}[uuurr] && v_1^{(5)} \ar@[|(2)]@{-}@{~>}[rr]&& *+[o][F]{v_1^{(6)}} \\
v_2^{(4)} \ar@[|(2)]@{-}@{-->}[uuurr] \ar@[|(2)]@{-}@{~>}[rr] && v_2^{(5)}  && {v_2^{(6)}}\\
		&&		v_3^{(5)} \ar@{..>}@[|(2)][uuuuuuurr]		&& }
\end{equation*}

Then $M(\dd,\rr,\epsilon,\lambda)$ is the module given by 
\begin{align*}
\includegraphics{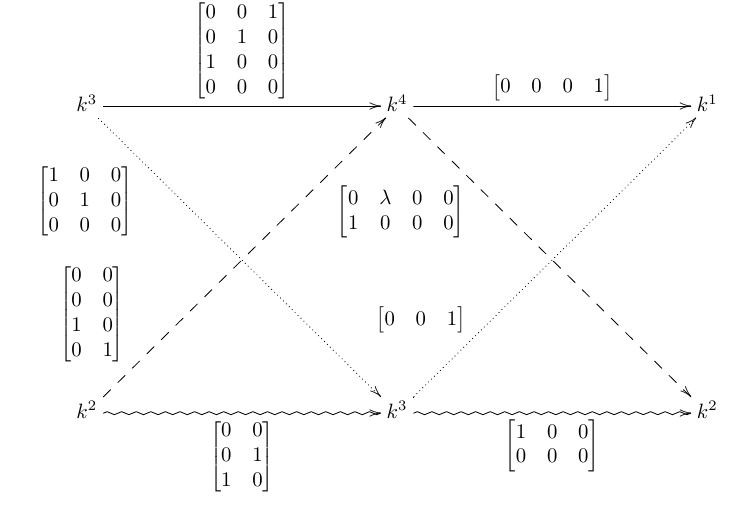}
\end{align*}

\end{example}

In \cite{Carroll2}, an explicit minimal projective resolution for each $M(\dd,\rr,\epsilon,\underline{\lambda})$ is constructed.  Using this resolution it is possible to show the following:
\begin{equation}
  \label{eq:ext1}
  \Ext^1_A(M(\dd,\rr,\epsilon,\underline{\lambda}), M(\dd, \rr,\epsilon,\underline{\lambda'}))=0
\end{equation}
whenever $\underline{\lambda},\underline{\lambda'}\in (k^*)^B$ are vectors that share no common coordinates, and that when $\Gamma(\dd,\rr,\epsilon)$ consists of a single band component,
\begin{equation}
  \label{eq:ext1band}
  \Ext^1_A(M(\dd,\rr,\epsilon, \underline{\lambda}), M(\dd,\rr,\epsilon,\underline{\lambda}))=k.
\end{equation}
In particular, these results can be exploited to show that
\begin{equation*}
  \module(A, \dd,\rr)=\overline{\bigcup_{\underline{\lambda}\in (k^*)^B}
    \GL(\dd)M(\dd,\rr,\epsilon, \underline{\lambda})}.
\end{equation*} 
When there are no band modules, then, $\module(A,\dd,\rr)$ is simply
the orbit closure of the rigid module $M(\dd,\rr,\epsilon)$.  This means that in order
to apply the results of Section \ref{sec:rational-invariants}, we need
only understand those irreducible components whose generic modules are
direct sums of band modules.

Given these results, we notice that the family $\GL(\dd)M(\dd,\rr,\epsilon,\underline{\lambda})$ is independent of the choice of $\epsilon$.  In the subsequent sections we will suppress this variable when referring to both the up-and-down graph and the up-and-down modules.  

It is important to note that the indecomposable direct summands of $M(\dd, \rr, \epsilon, \underline{\lambda})$ correspond to connected components of $\Gamma(\dd, \rr,\epsilon)$.  Thus, the graph $\Gamma(\dd,\rr,\epsilon)$ also gives the generic decomposition of $\module(A, \dd,\rr)$, i.e., the dimension vectors $\dd_i$ and rank functions $\rr_i$ such that
\begin{align*}
  \module(A, \dd,\rr) = \overline{\module(A,\dd_1,\rr_1)\oplus\dotsc\oplus \module(A, \dd_n, \rr_n)}.
\end{align*}
where $\module(A, \dd_i, \rr_i)$ are indecomposable irreducible components. In particular, in order to apply the reduction techniques from Proposition {~\ref{reduction-rational-inv-prop}} and Theorem \ref{theta-stable-decomp-thm}, it needs to be shown that if $\module(A, \dd_i, \rr_i)$ and $\module(A, \dd_j, \rr_i)$ are direct summands that are not orbit closures, and $\dd_i=\dd_j$, then $\rr_i=\rr_j$.  

\subsection{Regular Irreducible Components}
\label{subsec:reg-irr-cpts}

An irreducible component $\module(A, \dd, \rr)$ is called \emph{regular} if the generic module $M(\dd,\rr,\underline{\lambda})$ is a direct sum of bands. That is the only connected components of $\Gamma(\dd,\rr)$ are cycles.  We can characterize the ranks for which this is the case in the following proposition.

\begin{prop}{\cite[Section 5]{Carroll:Thesis}}
  \label{prop:udgraphconsequences}
  The irreducible component $\module(A, \dd,\rr)$ is regular if and
  only if the following two conditions hold:
  \begin{itemize}
  \item[I.] for any pair of consecutive monochromatic arrows $a, b$
    with $hb=ta=i$, $\rr(a)+\rr(b)=\dd(i)$;
  \item[II.] for any vertex $i$ incident to only one color, $\dd(i)=0$.
  \end{itemize}
\end{prop}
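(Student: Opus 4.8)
The plan is to translate the regularity of $\module(A,\dd,\rr)$ into a purely combinatorial statement about the up and down graph $\Gamma=\Gamma(\dd,\rr,\epsilon)$ and then read off conditions I and II. By construction the indecomposable summands of a generic module $M(\dd,\rr,\epsilon,\underline{\lambda})$ are exactly the pushforwards of the connected components of $\Gamma$, and such a component yields a band module precisely when it is a cycle; since every vertex of $\Gamma$ is incident to at most two arrows, a connected component is a cycle exactly when each of its vertices has (undirected) degree two. Hence $\module(A,\dd,\rr)$ is regular if and only if every vertex $v_j^i$ of $\Gamma$ has degree two, and the whole argument reduces to analyzing, vertex by vertex of $Q$, the arrows of $\Gamma$ incident to the fiber $\{v_1^i,\dots,v_{\dd(i)}^i\}$.

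The first step I would carry out is to record the local picture over a fixed vertex $i$. Since $A$ is triangular gentle, $i$ is incident to at most two colors. Fix a color $s$ incident to $i$ and let $\alpha$ (resp. $\beta$) be the arrow of color $s$ with head (resp. tail) $i$, if it exists. Inspecting the definition of $\Gamma$, the heads $hf_j^{\alpha}$, $1\le j\le \rr(\alpha)$, fill an initial or terminal block of $\rr(\alpha)$ consecutive vertices of the fiber — which end is determined by $\epsilon(i,s)$ — while the tails $tf_j^{\beta}$, $1\le j\le\rr(\beta)$, fill a block of $\rr(\beta)$ vertices at the opposite end, because the reversal governed by $\epsilon(i,s)$ acts oppositely in the head-rule and in the tail-rule. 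The rank-function inequality $\rr(\alpha)+\rr(\beta)\le\dd(i)$ (in its degenerate form when one of $\alpha,\beta$ is absent) says exactly that these two blocks are disjoint, and they exhaust the fiber if and only if $\rr(\alpha)+\rr(\beta)=\dd(i)$. Consequently every $v_j^i$ is incident to exactly one arrow of color $s$ iff $\rr(\alpha)+\rr(\beta)=\dd(i)$; to at most one arrow of color $s$ in all cases; and to no arrow of color $s$ for some index $j$ otherwise.

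Next I would assemble the two implications. If $\module(A,\dd,\rr)$ is regular and $\dd(i)>0$, then some $v_j^i$ exists and must have degree two; since each incident color contributes at most one arrow to it, $i$ must be incident to at least — hence exactly — two colors, which is condition II in contrapositive form, and for both colors the ``one arrow per fiber vertex'' situation must hold for every index, which by the previous step forces the equalities $\rr(\alpha)+\rr(\beta)=\dd(i)$, i.e. condition I. Conversely, assume I and II. Given $v_j^i$, condition II forces $i$ to carry two colors $s,s'$, and condition I applied to the monochromatic consecutive pairs at $i$ gives $\rr(\alpha_s)+\rr(\beta_s)=\dd(i)=\rr(\alpha_{s'})+\rr(\beta_{s'})$, so by the local picture $v_j^i$ is incident to exactly one arrow of color $s$ and exactly one of color $s'$, hence has degree two. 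As this holds at every vertex of $\Gamma$, all components are cycles and $\module(A,\dd,\rr)$ is regular.

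The step I expect to be the main obstacle is the bookkeeping at the boundary vertices of the colored paths — the sources and sinks of a color path, where only one of $\alpha,\beta$ exists — where one must check that conditions I and II (the latter propagated along the colored paths, together with the maximality of $\rr$) still pin each fiber vertex down to degree two or else already force the relevant dimension to vanish; this is also the place where one must verify that ``all components of $\Gamma$ are cycles'' is genuinely equivalent to ``every vertex of $\Gamma$ has degree two'', ruling out pathological short cycles using the acyclicity of $Q$. The sign-function computation showing that the head-block and the tail-block sit at opposite ends of the fiber, while routine, is the other place where one has to be attentive to conventions.
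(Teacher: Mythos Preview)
The paper does not actually prove this proposition; it merely cites \cite[Section 5]{Carroll:Thesis}. So there is no in-paper argument to compare your proposal against. That said, your approach---reducing regularity to the requirement that every vertex of $\Gamma(\dd,\rr,\epsilon)$ have undirected degree exactly two, and then reading this off fiber by fiber via the head-block/tail-block analysis---is the natural one and is almost certainly what is done in the cited reference. Your worry about ``pathological short cycles'' is harmless: acyclicity of $Q$ rules out loops in $\Gamma$, and $2$-cycles in $\Gamma$ are perfectly legitimate bands.

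The obstacle you flag at the end, however, is not mere bookkeeping: it is a genuine gap in the statement as literally written, and maximality of $\rr$ does \emph{not} close it. At an endpoint $i$ of a colored path, where only one of the two monochromatic arrows exists, condition I (read strictly) imposes nothing, and condition II imposes nothing either when $i$ carries a second color. Concretely, on the quiver of Example~\ref{ex:up-down-graphs} take $\dd=(2,2,1,1,2,1)$ and $\rr\equiv 1$: this $\rr$ is maximal and conditions I and II hold verbatim, yet over vertex~$1$ the two fiber vertices each have degree one, so the component is not regular. The intended reading---confirmed by the proof of Corollary~\ref{cor:uniqueness}, which invokes $\rr(a_1)=\dd(i_0)$ directly from this proposition---is that condition I includes the degenerate equalities $\rr(a)=\dd(i)$ at sources and sinks of each colored path, parallel to the degenerate inequalities in the definition of a rank function. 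With that reading your local computation gives both directions immediately, and no propagation or maximality argument is needed.
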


There are a number of important consequences that follow from this combinatorial requirement.

\begin{corollary}\label{cor:uniqueness}
If $\module(A, \dd)$ contains an irreducible regular component, then it is unique.  Furthermore, in this case $\langle\langle \dd,\dd\rangle\rangle=0$.  
\end{corollary}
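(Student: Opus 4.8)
The plan is to reduce both assertions to the combinatorics of the maximal rank function $\rr$ cutting out a regular component and of the colored paths of $Q$. Recall from \cite{Carroll1} that every irreducible component of $\module(A,\dd)$ is $\module(A,\dd,\rr)$ for a maximal rank function $\rr$, and by Proposition \ref{prop:udgraphconsequences} it is regular precisely when $\rr$ satisfies (I) and (II). Write a colored path of $Q$ as $P_s\colon w_0\to w_1\to\dots\to w_m$ with arrows $b_1,\dots,b_m$, all of color $s$, so each product $b_{j+1}b_j$ lies in $I_c$. The first step I would take is an \emph{endpoint identity}: for a regular component one has $\rr(b_1)=\dd(w_0)$ and $\rr(b_m)=\dd(w_m)$ for every colored path. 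If $\dd(w_0)=0$ this is immediate from $\rr(b_1)\le\min(\dd(w_0),\dd(w_1))$. If $\dd(w_0)>0$, condition (II) forces $w_0$ to carry exactly two colors $s,t$, and I would count the arrows of the up-and-down graph $\Gamma(\dd,\rr)$ incident to the $\dd(w_0)$ vertices lying over $w_0$: since the component is regular $\Gamma$ is a disjoint union of cycles, so this total is $2\dd(w_0)$; the color-$s$ arrows at $w_0$ account for exactly $\rr(b_1)$ of them (there is no color-$s$ arrow into $w_0$), and the color-$t$ arrows account for exactly $\dd(w_0)$ — by condition (I) at $w_0$ when $w_0$ is interior to $P_t$, and, when $w_0$ is an endpoint of $P_t$, because the two contributions sum to $2\dd(w_0)$ while each is at most $\dd(w_0)$. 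Hence $\rr(b_1)=\dd(w_0)$, and the case of $w_m$ is symmetric.

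Granting the endpoint identity, uniqueness is immediate: condition (II) makes $\dd$, and hence $\rr$, vanish at each one-color vertex and on every arrow meeting it, while along each colored path $P_s$ the endpoint identity pins down $\rr(b_1)=\dd(w_0)$ and condition (I), in the form $\rr(b_{j+1})=\dd(w_j)-\rr(b_j)$, then determines $\rr(b_2),\dots,\rr(b_m)$ recursively. So the rank function of a regular component is a function of $\dd$ alone, and any two regular components of $\module(A,\dd)$ coincide.

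For $\langle\langle\dd,\dd\rangle\rangle=0$ I would start from the well-known description of minimal projective resolutions of simple modules over a gentle algebra: $\dim_k\Ext^l_A(S_i,S_j)$ is the number of length-$l$ relation paths from $i$ to $j$ — paths $a_l\cdots a_1$ in $Q$ with $a_{r+1}a_r\in I$ for all $r<l$, a vertex when $l=0$ and an arrow when $l=1$. Since $I=I_c$, every relation path is monochromatic, hence a consecutive subpath of a single colored path. Grouping $\langle\langle\dd,\dd\rangle\rangle=\sum_l(-1)^l\sum_{i,j}\dim_k\Ext^l_A(S_i,S_j)\,\dd(i)\dd(j)$ by color — and using condition (II) to dispose of the one-color vertices, so that the $l=0$ contribution $\dd(i)^2$ of a vertex $i$ splits as $\tfrac12+\tfrac12$ between the two colors at $i$ — one finds that the contribution of $P_s$ telescopes to $\tfrac12\big(\sum_{j=0}^{m}(-1)^j\dd(w_j)\big)^2$, that is, $2\langle\langle\dd,\dd\rangle\rangle=\sum_s\big(\sum_{j=0}^{m}(-1)^j\dd(w_j)\big)^2$. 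Substituting $\dd(w_0)=\rr(b_1)$, $\dd(w_m)=\rr(b_m)$ and $\dd(w_j)=\rr(b_j)+\rr(b_{j+1})$ for $0<j<m$ (the endpoint identity together with condition (I)), each inner alternating sum telescopes to $0$, so $\langle\langle\dd,\dd\rangle\rangle=0$.

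I expect the endpoint identity to be the real content; once it is in place the uniqueness of $\rr$ and the two telescopings are purely formal. The delicate point inside it is the case where the starting vertex of $P_s$ is an \emph{endpoint}, rather than an interior vertex, of the other colored path through it: there one cannot invoke condition (I) and must instead use maximality of $\rr$ (via the inequalities $\rr(a)\le\min(\dd(ta),\dd(ha))$) to force the missing equality. A secondary point needing care is quoting the correct relation-path formula for the Euler form of a gentle algebra: unlike the naive formula ``vertices minus arrows plus length-two relations'', which is valid only in global dimension at most two, the relation-path formula stays correct when — as can happen for acyclic gentle algebras — the global dimension is large.
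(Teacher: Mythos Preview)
Your argument is correct and follows the same outline as the paper's proof: determine $\rr$ along each colored path from its initial value by the recursion $\rr(b_{j+1})=\dd(w_j)-\rr(b_j)$ coming from condition~(I), and compute the Euler form color by color, reducing it to the vanishing of the alternating sum $\sum_{j}(-1)^j\dd(w_j)$ along each colored path. The paper is terse at exactly the two places you flesh out---it simply asserts $\rr(a_1)=\dd(i_0)$ and defers the Euler form computation to \cite[Chapter 5.1]{Carroll:Thesis}---so your degree-counting proof of the endpoint identity via the cycle structure of $\Gamma(\dd,\rr)$ and your relation-path expansion of $\langle\langle\dd,\dd\rangle\rangle$ are precisely the missing details.

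One small correction of attribution: in your last paragraph you credit the inequalities $\rr(a)\le\min(\dd(ta),\dd(ha))$ to \emph{maximality} of $\rr$. They are in fact part of the \emph{definition} of a rank function (the degenerate inequalities at the ends of each colored path, together with $\rr(a)+\rr(b)\le\dd(i)$ and $\rr\ge 0$ at interior vertices). Maximality plays no role in your argument: what drives the endpoint identity is regularity (every vertex of $\Gamma$ has degree~$2$) combined with those built-in inequalities. This does not affect the correctness of your proof.
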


\begin{proof}
  Suppose that $\module(A, \dd)$ contains an irreducible regular component.  We will show that $\rr$ is determined uniquely by the dimension vector and Proposition \ref{prop:udgraphconsequences}.  Given any color $s\in S$, let
  $a_{n_s}\cdot \dotsc\cdot a_1$ be the full path of the elements
  $c^{-1}(s)$.  Let $i_j=ha_j$ and $i_0=ta_1$.  Then
  $\rr(a_1)=\dd(i_0)$, and $\rr(a_{j+1})=\dd(i_j) -\rr(a_j)$, so the
  rank function $\rr$ for which $\module(A, \dd,\rr)$ is regular is
  uniquely determined by $\dd$.  As for the Euler form, one simply
  expresses the matrix of the form as a sum of contributions along
  each color, and then uses the fact that if $\module(A, \dd,\rr)$ is
  regular, then for any fully colored path as above, $\sum_{j=0}^{n_s}(-1)^j
  \dd(i_j)=0$.  Precise details can be found in \cite[Chapter 5.1]{Carroll:Thesis}.  
\end{proof}

It will be convenient to describe the projective resolution for
$X_{\underline{\lambda}}:=M(\dd,\rr,\underline{\lambda})$ when $\module(A, \dd,\rr)$ is a regular irreducible component (the
general case is handled in \cite[Section 3]{Carroll2}).  This will
allow us to derive a closed formula for the Schofield semi-invariants of
weight $\langle\langle \dd,-\rangle\rangle$.  Let us denote by $S^0$
and $S^1$ the set of sources and sinks of
$\Gamma=\Gamma(\dd,\rr)$, respectively.  For any $v_i^x\in
S^1$, we distinguish two distinct paths in $\Gamma$ terminating at
$v_i^x$, denoted $l^+(v_i^x)$ and $l^-(v_i^x)$.  The path
$l^\delta(v_i^x)$ is determined by the following two conditions:
\begin{itemize}
\item the tail $tl^\delta(v_i^x)$ is in $S^0$ and
\item $\epsilon(v_i^x,c(f_j^a))=\delta$ where $f^a_j$ is the arrow in
  $l^\delta(v_i^x)$ incident to $v_i^x$.  
\end{itemize}
Notice that this indeed defines two distinct paths, since in $\Gamma$
each vertex is incident to precisely two colored arrows whose signs
under $\epsilon$ are different.  We now consider the modules
\begin{align*}
  P_0&=\bigoplus\limits_{v_i^x\in S^0} P(v_i^x)\\
  P_1&=\bigoplus\limits_{v_i^x\in S^1} P(v_i^x)
\end{align*}
with a map $F: P_1\rightarrow P_0$ defined in the following way.  Let $F|_{(i,x)}: P(v_i^x) \rightarrow P(tl^+(v_i^x))\oplus P(tl^-(v_i^x))$ be the map given by
\begin{equation*}
  P(v_i^x)\xrightarrow{\begin{bmatrix} \lambda_b l^+(v_i^x) \\
      -l^-(v_i^x)\end{bmatrix}} P(tl^+(v_i^x))\oplus
  P(tl^-(v_i^x))\end{equation*}
if there is a band $b\in B$ with $\Theta(b)=v_i^x$, or
\begin{equation*}
    P(v_i^x)\xrightarrow{\begin{bmatrix} l^+(v_i^x) \\
      -l^-(v_i^x)\end{bmatrix}} P(tl^+(v_i^x))\oplus
  P(tl^-(v_i^x))\end{equation*}
otherwise.  Following $F|_{(i,x)}$ by the inclusion gives a map
$P(v_i^x)\rightarrow P_0$ which we denote with the same symbol.  Then
$F$ is defined to be the map $\sum_{v_i^x\in S^1} F|_{(i,x)}$.

In \cite{Carroll2} it was shown that $P_1\rightarrow P_0 \rightarrow
X_{\underline{\lambda}} \rightarrow 0$ is a minimal projective resolution of
$X_{\underline{\lambda}}$.  In particular, $X_{\underline{\lambda}}$ has projective dimension one
for all $\underline{\lambda} \in (k^*)^B$.  One important consequence is the
following.

\begin{prop}
  \label{prop:schurbands}
If $M(\dd,\rr,\lambda)$ is a generic indecomposable (regular) module, then it is Schur.
\end{prop}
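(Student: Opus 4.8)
The plan is to show that if $M := M(\dd,\rr,\underline{\lambda})$ is a generic indecomposable regular module, then $\End_A(M) \cong k$. Since $M$ is indecomposable, $\End_A(M)$ is a local ring, so it suffices to show $\dim_k \End_A(M) = 1$, equivalently that every endomorphism of $M$ is a scalar. Because $\module(A,\dd,\rr)$ is regular and indecomposable, $\Gamma(\dd,\rr)$ is a single band component, and we have the explicit minimal projective resolution $0 \to P_1 \xrightarrow{F} P_0 \to M \to 0$ recalled just above, with $P_1 = \bigoplus_{v_i^x \in S^1} P(v_i^x)$ and $P_0 = \bigoplus_{v_i^x \in S^0} P(v_i^x)$. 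The first step is to compute $\dim_k \End_A(M)$ homologically: since $\pdim M = 1$, we have $\langle\langle \dd,\dd\rangle\rangle_A = \dim_k \Hom_A(M,M) - \dim_k \Ext^1_A(M,M)$, and by \eqref{eq:ext1band} we know $\Ext^1_A(M,M) = k$, while by Corollary \ref{cor:uniqueness} we know $\langle\langle \dd,\dd\rangle\rangle_A = 0$. Therefore $\dim_k \Hom_A(M,M) = 0 + 1 = 1$, which is exactly the claim.

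So the core of the argument is really already packaged into results stated earlier in the excerpt; the proof is a two-line consequence of \eqref{eq:ext1band}, Corollary \ref{cor:uniqueness}, the fact that $\pdim M = 1$, and the formula $\langle\langle \dd,\ee\rangle\rangle_A = \sum_{l\geq 0}(-1)^l \dim_k \Ext^l_A(M,N)$ for $M$ of dimension vector $\dd$ and $N$ of dimension vector $\ee$. I would simply assemble these: first invoke that $M$ has projective dimension one (from the explicit resolution $P_1 \to P_0 \to X_{\underline{\lambda}} \to 0$), so that $\Ext^{\geq 2}_A(M,M) = 0$ and the Euler form reduces to $\langle\langle \dd,\dd\rangle\rangle_A = \dim_k\Hom_A(M,M) - \dim_k\Ext^1_A(M,M)$; then substitute $\langle\langle\dd,\dd\rangle\rangle_A = 0$ and $\dim_k\Ext^1_A(M,M) = 1$ to conclude $\dim_k\Hom_A(M,M) = 1$. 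Finally note that any nonzero algebra of dimension one over $k$ is just $k$, and $\End_A(M) \neq 0$ since it contains the identity, giving $\End_A(M) \cong k$, i.e. $M$ is Schur.

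The main subtlety — though it is not really an obstacle here — is making sure the hypotheses line up: \eqref{eq:ext1band} is stated precisely for the case when $\Gamma(\dd,\rr,\epsilon)$ consists of a single band component, which is exactly the situation of a generic \emph{indecomposable} regular module (indecomposable summands correspond to connected components of $\Gamma$, and regular means all components are cycles/bands), so the hypothesis is met. Similarly one should note that since $\underline{\lambda} \in (k^*)^B$ with $|B| = 1$, the relevant $\Ext^1$ computation from \cite{Carroll2} applies. An alternative, more hands-on route would be to argue directly that an endomorphism of the band module $M$ induces a compatible endomorphism of the string diagram underlying the single band, and that the band structure (a single cycle in $\Gamma$ with the scalar $\lambda_b \in k^*$ at $\Theta(b)$) forces this to act as a single scalar on all the one-dimensional spaces $\widetilde{M}(v_j^i) = k$; but the homological argument above is cleaner and uses only results already established, so that is the route I would write up.
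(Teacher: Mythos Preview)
Your proof is correct and follows essentially the same route as the paper: use that $\pdim M=1$ to reduce the Euler form to $\dim_k\Hom_A(M,M)-\dim_k\Ext^1_A(M,M)$, then plug in $\langle\langle\dd,\dd\rangle\rangle=0$ from Corollary~\ref{cor:uniqueness} and $\dim_k\Ext^1_A(M,M)=1$ from \eqref{eq:ext1band}. Your write-up is in fact more careful than the paper's about checking that the single-band hypothesis of \eqref{eq:ext1band} is met.
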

\begin{proof}
Suppose $M=M(\dd,\rr,\lambda)$ is indecomposable regular.  As remarked, its projective dimension is 1, so, $\langle \langle \dd, \dd\rangle \rangle =\dim_k \Hom_A(M,M) - \dim_k \Ext^1_A(M,M)$.  Putting this together with Corollary \ref{cor:uniqueness} and Equation \ref{eq:ext1band}, this yields:
    \begin{align*}
    0=\langle\langle \dd,\dd\rangle\rangle &= \dim_k \Hom_A(M,M) +1.
    \end{align*}
The proposition follows immediately.
\end{proof}
 
\begin{corollary}\label{cor:schurbands}
Let $\module(A, \dd,\rr)$ be an indecomposable irreducible regular component of $\module(A, \dd)$.  Then the following hold.
 \begin{itemize}
 \item[1.] The field of rational invariants $k(\module(A, \dd,\rr))^{\GL(\dd)}$ is purely transcendental of degree 1;
 \item[2.] If $\theta$ is a weight such that
   $\module(A,\dd,\rr)_\theta^s\neq \emptyset$, then $\M(\module(A, \dd,\rr))^{ss}_\theta\cong\mathbb{P}^1$.
\end{itemize}
\end{corollary}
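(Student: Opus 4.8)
The plan is to obtain both statements as direct applications of Proposition~\ref{prop:tamerationality} to the component $C:=\module(A,\dd,\rr)$, once its two hypotheses there are verified, namely that $C$ is normal and that $C$ is a Schur component. Normality is immediate from the discussion preceding this subsection: $\module(A,\dd,\rr)$ is a product of varieties of complexes (taken along the colored paths), and these are normal by DeConcini--Strickland. That $C$ is a Schur component is exactly Proposition~\ref{prop:schurbands}: since the component is indecomposable and regular, the up and down graph $\Gamma(\dd,\rr)$ is connected and consists only of cycles, hence is a single band, so the generic module of $C$ is the indecomposable band module $M(\dd,\rr,\lambda)$ with $\lambda$ ranging over $k^*$, and Proposition~\ref{prop:schurbands} says it is Schur. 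Moreover $A$ is tame, being gentle.

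Next I would record that $C$ is not an orbit closure. Indeed, the modules $M(\dd,\rr,\lambda)$, $\lambda\in k^*$, are pairwise non-isomorphic indecomposables (distinct band modules over a string algebra), so $\bigcup_{\lambda\in k^*}\GL(\dd)M(\dd,\rr,\lambda)$ is an infinite union of orbits with dense image in $C$; in particular $C\neq\overline{\GL(\dd)M}$ for any single module $M$. By Lemma~\ref{dim-count-irr-comp} the integer $\dim C-\dim\GL(\dd)+\min\{\dim_k\End_A(M)\mid M\in C\}$ lies in $\{0,1\}$, and since $C$ is not an orbit closure it must equal $1$; combined with the transcendence-degree formula recalled in the Remark following the rational-quotient discussion, this gives $\trdeg k(C)^{\GL(\dd)}=1$. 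For part~(1), Proposition~\ref{prop:tamerationality}(3) applies (tame algebra, Schur component) and yields that $k(C)^{\GL(\dd)}$ is a rational field of transcendence degree at most one; together with the computation just made, $k(C)^{\GL(\dd)}\cong k(t)$, i.e.\ it is purely transcendental of degree $1$.

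For part~(2), fix a weight $\theta$ with $\module(A,\dd,\rr)^s_\theta\neq\emptyset$. Then Proposition~\ref{prop:tamerationality}(2) applies: $\M(C)^{ss}_\theta$ is rational, and since $C$ is normal it is either a point or $\mathbb{P}^1$. To exclude the point, invoke Remark~\ref{rmk-fine-moduli-rational-inv}, which gives $k(\M(C)^{ss}_\theta)\cong k(C)^{\GL(\dd)}\cong k(t)$; this function field has transcendence degree $1$, so $\dim\M(C)^{ss}_\theta=1$ and hence $\M(C)^{ss}_\theta\cong\mathbb{P}^1$.

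The only step requiring genuine attention is ruling out transcendence degree $0$ (equivalently, excluding that $\M(C)^{ss}_\theta$ is a point): one must use that an indecomposable regular component carries an honest nontrivial one-parameter band family and is therefore not an orbit closure. Everything else is a straightforward citation of Proposition~\ref{prop:tamerationality}, Proposition~\ref{prop:schurbands}, and the normality of varieties of complexes.
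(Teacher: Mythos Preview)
Your proof is correct and follows essentially the same route as the paper's: both reduce to Proposition~\ref{prop:tamerationality} via normality (DeConcini--Strickland) and the Schur property (Proposition~\ref{prop:schurbands}), and both exclude the point case in part~(2) by invoking Remark~\ref{rmk-fine-moduli-rational-inv} together with part~(1). Your argument is in fact more careful than the paper's on one point: since Proposition~\ref{prop:tamerationality}(3) only asserts transcendence degree \emph{at most} one, you explicitly rule out degree~$0$ by showing $C$ is not an orbit closure (via the infinite band family and Lemma~\ref{dim-count-irr-comp}), whereas the paper's one-line proof of part~(1) leaves this step implicit.
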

 
\begin{proof}
Part 1 follows immediately from proposition above and Proposition
\ref{prop:tamerationality}{(3)}.  As for part 2, we have already
mentioned that $\module(A, \dd, \rr)$ is normal, so $\M(\module(A,
\dd,\rr))^{ss}_\theta$ is either a point or $\mathbb{P}^1$ by
Proposition \ref{prop:tamerationality}{(2)}.  But from part (1) and
Remark \ref{rmk-fine-moduli-rational-inv}, the moduli space is not a point.  
\end{proof}

It turns out that we can actually extend the second part of the previous result to the case of triangular string algebras:

\begin{corollary}\label{cor:string-moduli} Let $A=kQ/I$ be a triangular string algebra, $\dd$ a generic root of $A$, and $C \subseteq \module(A,\dd)$ an indecomposable irreducible component. If $\theta \in \ZZ^{Q_0}$ is an integral weight such that $C^s_{\theta} \neq \emptyset$ then $\M(C)^{ss}_{\theta}$ is either a point or $\mathbb P^1$. 
\end{corollary}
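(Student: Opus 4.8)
The plan is to reduce the statement to Proposition~\ref{prop:tamerationality}(2), whose only hypothesis unavailable in the non-gentle case is the normality of $C$, and to obtain that normality by a change-of-algebra trick. First I would dispose of the case in which $C=\overline{\GL(\dd)M}$ is an orbit closure: then $\GL(\dd)M$ is dense in $C$, so every $\GL(\dd)$-invariant rational function on $C$ is constant, $k(C)^{\GL(\dd)}=k$, and by Remark~\ref{rmk-fine-moduli-rational-inv} the projective variety $\M(C)^{ss}_\theta$ has function field $k$, hence is a single point. So from now on assume $C$ is not an orbit closure. Since $A$ is a string algebra it is tame, so by the description recalled in Section~\ref{sect:modvar} we may write $C=\overline{\bigcup_{\lambda\in\mathcal U}\GL(\dd)f(\lambda)}$ for a parameterizing pair $(\mathcal U,f)$ with $\mathcal U\subseteq k^{*}$; because $f$ is generically injective up to isomorphism, the generic module $f(\lambda)$ genuinely varies with $\lambda$, so it is not a string module and it is not rigid, and therefore by the classification of indecomposable modules over string algebras (\cite{BR}) it is a band module.

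Next I would pass from $A$ to a gentle algebra. Choose a coloring $c\colon Q_1\to S$ of $Q$, which exists precisely because $A$ is a string algebra, and set $A_c=kQ/I_c$ for the coloring ideal; then $A_c$ is a triangular gentle algebra (the defining conditions are readily checked from the coloring) and $I_c\subseteq I$, so $\module(A,\dd)$ is a closed subvariety of $\module(A_c,\dd)$. A band avoids $I\supseteq I_c$, so the band underlying $f(\lambda)$ is a band for $A_c$ as well, and $f(\lambda)$ is, as a representation of $Q$, precisely the up-and-down band module $M(\dd,\rr,\lambda)$ of $A_c$ for the corresponding rank function $\rr$ (so that $\Gamma(\dd,\rr)$ consists of a single band component). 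Since Zariski closure does not depend on which closed ambient variety one takes it in, $C=\overline{\bigcup_{\lambda}\GL(\dd)M(\dd,\rr,\lambda)}$ agrees with the component $\module(A_c,\dd,\rr)$ of $A_c$ described in Section~\ref{sec:gentle-sec}. That component is a product of varieties of complexes, which are normal by DeConcini--Strickland \cite{DeConcini-Strickland}, and hence $C$ is normal.

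Having shown that $C$ is normal and knowing that $C^{s}_{\theta}\neq\emptyset$, Proposition~\ref{prop:tamerationality}(2) then gives that $\M(C)^{ss}_{\theta}$ is either a point or $\mathbb{P}^1$, which is the assertion.

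The step I expect to be the main obstacle is the identification in the middle paragraph: one must argue carefully that the generic module of a non-orbit-closure indecomposable component of a triangular string algebra is literally an up-and-down band module of the associated gentle algebra $A_c$, and — more importantly — that $C$ equals the \emph{whole} component $\module(A_c,\dd,\rr)$ of $A_c$ rather than a proper closed subvariety of it; equivalently, that the relations of $I$ beyond those in $I_c$ impose no condition along $C$. This should follow from the fact that a band avoids all of $I$, so those extra relations already vanish on the dense family $\bigcup_{\lambda}\GL(\dd)M(\dd,\rr,\lambda)$ and therefore on $C$, but turning this into a rigorous argument requires the string-module combinatorics together with Carroll's description of the irreducible components.
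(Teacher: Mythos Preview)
Your approach is genuinely different from the paper's, and if the identification step can be completed it is in some ways cleaner. The paper does \emph{not} attempt to show that $C$ coincides with a gentle component; it only uses the containment $C\subseteq C'$ for an irreducible component $C'\subseteq\module(A',\dd)$ of the associated gentle algebra $A'=kQ/I_c$, and then compares moduli spaces. Concretely, the paper takes the $\PGL(\dd)$-invariant composite $C^{ss}_\theta\hookrightarrow C'^{ss}_\theta\to\M(C')^{ss}_\theta$, factors it through a morphism $f\colon \M(C)^{ss}_\theta\to\M(C')^{ss}_\theta$, checks that $f$ is bijective (injectivity uses that $C^{ss}_\theta$ is closed in $C'^{ss}_\theta$; surjectivity uses that the source is a curve), and then invokes Zariski's Main Theorem together with $\M(C')^{ss}_\theta\cong\mathbb P^1$ (Corollary~\ref{cor:schurbands}). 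So the paper transfers normality at the level of the target moduli space, whereas you try to obtain normality of $C$ itself. Your route, if it goes through, yields the stronger statement that $C$ is normal and makes the appeal to Proposition~\ref{prop:tamerationality}(2) immediate; the paper's route avoids the delicate identification altogether.

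The weak point of your sketch is exactly where you flag it, and there is one subtlety you do not mention. Your argument that ``closure does not depend on the ambient closed variety'' shows that $C$ equals the closure of $\bigcup_\lambda\GL(\dd)f(\lambda)$ taken inside $\module(A_c,\dd)$, and this is indeed contained in $\module(A_c,\dd,\rr)$; but to conclude equality you need that the \emph{entire} family $\{M(\dd,\rr,\mu):\mu\in k^*\}$ of $A_c$-band modules already appears (up to isomorphism) among the $f(\lambda)$, or else an independent dimension argument. A cleaner way to close this gap is: once you know the whole family $M(\dd,\rr,\mu)$ consists of $A$-modules (because the band avoids $I$), the irreducible variety $\module(A_c,\dd,\rr)$ lies inside $\module(A,\dd)$ and hence inside some irreducible component of $\module(A,\dd)$; since $C\subseteq\module(A_c,\dd,\rr)$ and $C$ is itself a component, maximality forces $C=\module(A_c,\dd,\rr)$. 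The second subtlety is that the generic indecomposable $f(\lambda)$ is a priori a band module of \emph{arbitrary} multiplicity $n\ge 1$, and only the quasi-simple ($n=1$) band modules arise as up-and-down modules $M(\dd,\rr,\lambda)$ with $\Gamma(\dd,\rr)$ a single cycle. Here you should use your hypothesis $C^s_\theta\neq\emptyset$: a $\theta$-stable module is Schur, so the generic module in $C$ is Schur, which forces $n=1$ and makes the identification with $M(\dd,\rr,\lambda)$ legitimate. With these two points addressed, your argument is complete.
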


\begin{proof} It has been proved in \cite[Proposition 2.8]{Carroll1} that there exists a coloring $c$ of $Q$ such that $I_c\subseteq I$ and $A'=kQ/I_c$ is a colored gentle algebra. Hence, there exists an irreducible component $C' \subseteq(A',\dd)$ that contains  $C$. 

If $C$ is an orbit closure then the moduli space in question is just a point. The other case left is when $\M(C)^{ss}_{\theta}$ is a rational projective curve (see Proposition \ref{prop:tamerationality}{(2)}). Then, $\M(C')^{ss}_{\theta} \simeq \mathbb \PP^1$ by Corollary \ref{cor:schurbands}.

Now, let $\pi:C^{ss}_{\theta} \to \M(C)^{ss}_{\theta}$ and $\pi':C'^{ss}_{\theta} \to \M(C')^{ss}_{\theta}$ be the quotient morphisms for the actions of $\PGL(\dd)$ on $C^{ss}_{\theta}$ and $C'^{ss}_{\theta}$, respectively. Now, consider the $\PGL(\dd)$-invariant morphism $\varphi:C^{ss}_{\theta}\to \M(C')^{ss}_{\theta}$ that sends $M$ to $\pi'(M)$. From the universal property of GIT quotients, we know that there exists a morphism $f:\M(C)^{ss}_{\theta} \to \M(C')^{ss}_{\theta}$ such that $f \circ \pi=\varphi$. 

It is easy to see that $f$ is injective: Indeed, let $M_1,M_2 \in C^{ss}_{\theta}$ be so that $f(\pi(M_1))=f(\pi(M_2))$. Then, $\pi'(M_1)=\pi'(M_2)$, which is equivalent to $\overline{\GL(\dd)M_1}\cap \overline{\GL(\dd)M_2}\cap C'^{ss}_{\theta} \neq \emptyset$. Observe that $C^{ss}_{\theta}$ is closed in $C'^{ss}_{\theta}$ and hence $\overline{\GL(\dd)M_i}\cap C'^{ss}_{\theta}$, $i=1,2$, are contained in $C^{ss}_{\theta}$. So, $\overline{\GL(\dd)M_1}\cap \overline{\GL(\dd)M_2}\cap C^{ss}_{\theta} \neq \emptyset$, which is equivalent to $\pi(M_1)=\pi(M_2)$.

Moreover, it is clear now that $f$ is surjective since otherwise $\Ima f$ would be just a point. In conclusion, $f$ is a bijective morphism whose target variety is normal, and hence it has to be an isomorphism by (a consequence of) the Zariski's Main Theorem (in characteristic zero).
\end{proof}

\section{Proofs of the main results} \label{proofs-sec} We first prove Theorem \ref{mainthm-1}, and then proceed with the proof of our constructive Theorem \ref{mainthm-2}.
 
\begin{proof}[Proof of Theorem \ref{mainthm-1}]  $(1)$ Using Corollary  \ref{cor:uniqueness}, we know that we can write the generic decomposition of $C$ as:
$$
C=\overline{\bigoplus_{i=1}^n C_i^{\oplus m_i}\oplus \bigoplus_{j=n+1}^l C_j}
$$  
where $C_i \subseteq \rep(\Lambda,\dd_i)$, $1 \leq i \leq l$, are indecomposable irreducible regular components, $C_{n+1}, \dots, C_l$ are orbit closures, $m_1,\ldots, m_n$ are positive integers, and $\dd_i\neq \dd_j, \forall 1 \leq i\neq j\leq n$. (Of course, we do allow $n=0$ or $l=n$.)

But note that for each $1 \leq i \leq n$, $k(C_i)^{\GL(\dd_i)}\simeq k(t)$ by Corollary \ref{cor:schurbands}{(1)}. Applying now Proposition \ref{reduction-rational-inv-prop}, we obtain that $k(C)^{\GL(\dd)} \simeq k(t_1,\ldots, t_N)$ where $N=\sum_{i=1}^n m_i$.

$(2)$ First of all, any $\theta$-semi-stable irreducible component $C \subseteq \module(A,\dd)$ is $\theta$-well-behaved. This follows immediately from the uniqueness property in Corollary \ref{cor:uniqueness}.

Now, let us assume that $C$ is a $\theta$-semi-stable irreducible regular component whose $\theta$-stable decomposition is of the form:
$$
C=C_1\pp \ldots \pp C_l
$$
with $C_1, \ldots, C_l$ indecomposable irreducible regular components. Using Corollary \ref{cor:uniqueness} again, we can write the $\theta$-stable decomposition of $C$ as:
$$C=l_1\cdot C_1 \pp \ldots \pp l_m\cdot C_m$$ where $C_i \subseteq \module(A,\dd_i)$, $1 \leq i \leq m$, are $\theta$-stable irreducible regular components, and $\dd_i\neq \dd_j$ for all $1 \leq i\neq j \leq m$.

Moreover, if $C' \subseteq \module(A,\dd)$ is an irreducible component containing $\overline{\bigoplus_{i=1}^m C_i^{\oplus l_i}}$ then $C'$ contains a regular module, and so $C=C'$ by Corollary \ref{cor:uniqueness}. At this point, we can apply Theorem \ref{theta-stable-decomp-thm} and Corollary \ref{cor:schurbands}{(2)} and deduce that $\M(C)^{ss}_{\theta} \simeq \prod_{i=1}^m \mathbb P^{l_i}$. 
\end{proof}

\vspace{35pt}

To prove Theorem \ref{mainthm-2}, we will use the projective resolution from Section \ref{subsec:reg-irr-cpts}. But first let us briefly recall the construction of the so-called generalized Schofield semi-invariants on module varieties following Derksen and Weyman \cite{DW4}, and Domokos \cite{Domo}.

Let $X$ be an $A$-module and $P_1\xrightarrow{F} P_0 \rightarrow X
\rightarrow 0$ be a fixed minimal projective presentation of $X$ in
$\module(A)$.  Let $\theta^X\in \ZZ^{Q_0}$ be the integral weight
defined so that $\theta^X(v)$ is the multiplicity of $P_v$ in $P_0$
minus the multiplicity of $P_v$ in $P_1$ for all $v\in Q_0$.  For an
arbitrary $A$-module $M$, we have
\begin{align*}
  \theta^X(\ddim M) &= \dim_k \Hom_A(P_0,M)-\dim_k \Hom_A(P_1,M)\\
  &=\dim_k \Hom_A(X, M)-\dim_k \Hom_A(M, \tau X).
\end{align*}
Here, $\tau X$ is the Auslander-Reiten translation of $X$ given by
$D(\coker f^t)$ where $(-)^t=\Hom_A(-,A)$ and $D=\Hom_k(-,k)$ (for
more details, see \cite[\S IV.2]{AS-SI-SK}).  Notice that if $\pdim
X\leq 1$ then
$$\theta^X(\ddim M) = \dim_k \Hom_A(X, M)-\dim_k \Ext^1_A(X,
M)=\langle\langle \ddim X, \ddim M\rangle\rangle.$$

Before we continue our discussion on Schofield semi-invariants we mention the following example of a weight that satisfies the conditions in Theorem \ref{mainthm-1}{(2)}:

\begin{example} Let $C\subseteq \module(A,\dd)$ be an irreducible regular component. Then, it is generic decomposition is of the form:
$$
C=\overline{\bigoplus_{i=1}^mC_i^{l_i}},$$
where $C_i \subseteq \module(A,\dd_i)$, $1 \leq i \leq m$, are indecomposable irreducible regular components and $\dd_i \neq \dd_j, \forall 1 \leq i\neq j \leq m$. Notice also that $\langle \langle \dd_i, \dd_i \rangle \rangle=0$ for all $1 \leq i, j \leq m$ (for details, see Lemma \ref{lem:arbitrary-regular}).

We claim that the weight $\theta:=\langle \langle \dd, \cdot \rangle \rangle$ satisfies the conditions in Theorem \ref{mainthm-1}{(2)}. Specifically, we claim that the $\theta$-stable decomposition of $C$ is precisely 
$$
C=l_1\cdot C_1 \pp \ldots \pp l_m\cdot C_m.$$ 
For this, it is clearly enough to show that $C_{i, \theta}^s \neq \emptyset$ for each $1 \leq i \leq m$.  Let $M_i \in C_i$ be a Schur homogeneous module (e.g. choose $M_i$ to be any indecomposable band module in $C_i$). Let us check that $M_i \in C_{i, \theta}^s$. Set $M:=\bigoplus_{i=1}^m M_i^{l_i}$ and note that $\pdim M \leq 1$ and $\tau M \simeq M$ since each $M_i$ has these two properties. From the discussion above, we have that $\theta=\theta^M$ and so
$$
\theta(\ddim M'_i)=\dim_K \Hom_A(M,M'_i)-\dim_K \Hom_A(M'_i,M),
$$
for any proper submodule $M'_i <M_i$. Using that $M_i$ is Schur, $\Hom_A(M_j,M_l)=0$ for all $j \neq l$, and $M'_i \neq M_i$, it is easy to see that $\Hom_A(M,M'_i)=0$ and hence $$\theta(\ddim M'_i)=-\dim_K\Hom_A(M'_i,M)<0.$$ 

This shows that $C$ has the desired $\theta$-stable decomposition. We can now apply Theorem \ref{mainthm-1}{(2)} and conclude that $\M(C)^{ss}_{\theta}\simeq \prod_{i=1}^m \mathbb P^{l_i}$.
\end{example}

Let $\dd\in\ZZ^{Q_0}$ be a dimension vector of $A$ such that
$\theta^X(\dd)=0$. Then for any module $M\in \module(A, \dd)$,
$\dim_k\Hom_A(P_0,M) = \dim_k \Hom_A(P_1,M)$ and hence the linear
map \[\Hom_A(F, M): \Hom_A(P_0,M)\rightarrow \Hom_A(P_1, M)\] can be
viewed as a square matrix.  We can therefore define
\[\overline{c}^X:\module(A,\dd)\rightarrow k, \qquad
\overline{c}^X(M)=\det d^X(M).\]
It is easy to see that $\overline{c}^X$ is a semi-invariant of weight
$\theta^X$. Moreover, any other choice of a minimal projective
presentation of $X$ leads to the same semi-invariant, up to a non-zero
scalar. We call $\overline{c}^X$ a \emph{generalized Schofield semi-invariant}.

We can now apply this setup to triangular gentle algebras $A=kQ/I_c$, when $X=M(\dd,\rr,\lambda)$, a generic indecomposable regular module.  Let $$\module(A, \dd,\rr)=\overline{\bigcup_{\lambda\in k^*} \GL(\dd)\cdot M(\dd,\rr,\lambda)}$$ be an indecomposable irreducible
regular component.  Evidently, $M(\dd,\rr,\lambda)$ has projective dimension 1,
and from Corollary \ref{cor:uniqueness} we have that $\langle\langle
\dd,\dd\rangle\rangle =0$.  Therefore, $\theta^{M(\dd,\rr,\lambda)}(\dd) = 0$
and $\overline{c}^{M(\dd,\rr,\lambda)}(M)=\det \Hom_A(F, M)$ for all $M\in
\module(A, \dd,\rr)$ where $P_1\xrightarrow{F} P_0 \rightarrow
M(\dd,\rr,\lambda) \rightarrow 0$ is the minimal projective resolution described in Section \ref{subsec:reg-irr-cpts}. It can be shown that for any
$\mu\in k^*$, the map $\Hom_A(F, M(\dd,\rr,\mu))$ is a block
diagonal matrix with diagonal blocks $D_0, D_1,\dotsc, D_t$ where for
$i=1,\dotsc, t$ the $D_i$ are invertible upper triangular matrices
whose only non-zero entries are from the set $\{\pm 1, \pm \lambda,
\pm \mu\}$, and $D_0$ is a matrix of the form:
\begin{equation*}
  D_0 = \begin{bmatrix} -\lambda & \mu& 0 & 0 & \dotsc & 0 \\ 0 & \pm 1 & \pm 1 & 0 & \dotsc & 0\\ 0 & 0 & \pm 1 & \pm 1 & \dotsc& 0\\ \vdots & &\vdots& &\ddots & \vdots \\ \pm 1 & 0&0 &&\dotsc& \pm 1\end{bmatrix}
\end{equation*}
such that in each row there is precisely one positive entry and one negative entry.
Expanding the determinant of the matrix yields $\det \Hom_A(F,
M(\dd,\rr,\lambda)) = \lambda^p \mu^l(\lambda-\mu)$.
Summarizing, we have the following (see \cite[Section 3]{Carroll2}).

\begin{prop} \label{prop:semiinvariantband}
Let $\module(A, \dd,\rr)=\overline{\bigcup_{\lambda\in k^*} \GL(d)\cdot M(\dd,\rr,\lambda)}$ be an indecomposable irreducible regular component of $\module(A, \dd)$.
Then \begin{equation}\label{eq:cvformula}
  \overline{c}^{M(\dd,\rr,\lambda)}(M(\dd,\rr,\mu))=\lambda^p\mu^l(\lambda-\mu)
\end{equation} for some constants $p=p(\dd,\rr)$, $l=l(\dd,\rr)$.
\end{prop}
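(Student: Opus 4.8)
The plan is to turn the matrix computation sketched just above into a complete argument, whose technical core is the bookkeeping carried out in \cite[Section 3]{Carroll2}. Write $X_\lambda:=M(\dd,\rr,\lambda)$ and recall the minimal projective resolution $P_1\xrightarrow{F}P_0\to X_\lambda\to 0$ from Section \ref{subsec:reg-irr-cpts}, with $P_0=\bigoplus_{v_i^x\in S^0}P(v_i^x)$, $P_1=\bigoplus_{v_i^x\in S^1}P(v_i^x)$, and $F$ given on the summand $P(v_i^x)$ by the column with entries $\lambda\,l^+(v_i^x)$ and $-l^-(v_i^x)$ when $v_i^x=\Theta(b)$ and with entries $l^+(v_i^x)$, $-l^-(v_i^x)$ otherwise, landing in $P(tl^+(v_i^x))\oplus P(tl^-(v_i^x))\subseteq P_0$. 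Since the component is regular, $\langle\langle\dd,\dd\rangle\rangle=0$ by Corollary \ref{cor:uniqueness}; together with $\pdim X_\lambda=1$ this gives $\theta^{X_\lambda}=\langle\langle\dd,-\rangle\rangle$, hence $\theta^{X_\lambda}(\dd)=0$, so $\overline{c}^{X_\lambda}(M)=\det\Hom_A(F,M)$ is a well-defined function on $\module(A,\dd,\rr)$ and we may evaluate it at $M=X_\mu=M(\dd,\rr,\mu)$.

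First I would coordinatize $\Hom_A(F,X_\mu)$. Via the natural isomorphisms $\Hom_A(P(v),X_\mu)\cong X_\mu(\pi(v))$ and the canonical basis of $X_\mu=\pi_*(\widetilde M(\dd,\rr,\epsilon,\mu))$ indexed by the vertices $v_j^i$ of $\Gamma=\Gamma(\dd,\rr,\epsilon)$, the matrix of $\Hom_A(F,X_\mu)$ has rows indexed by basis vectors lying over sinks in $S^1$, columns by basis vectors over sources in $S^0$, and entries obtained by evaluating the structure maps of $X_\mu$ along the paths $l^+(v_i^x)$ and $l^-(v_i^x)$, with the $l^+$-contribution scaled by $\lambda$ exactly at $\Theta(b)$. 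Because $X_\mu$ is an up-and-down module, each $\widetilde M(f_j^a)$ equals $1$ except for the arrow entering $\Theta(b)$, where it equals $\mu$; hence each composite along an $l^\delta$-path is either zero or a single basis vector, and every matrix entry lies in $\{0,\pm1,\pm\lambda,\pm\mu,\pm\lambda\mu\}$.

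Then comes the block decomposition, which is the main point. Following the identifications $v_j^i\rightsquigarrow v_{j'}^{i'}$ read off from the paths $l^\pm$ partitions the common index set of rows and columns; because $\module(A,\dd,\rr)$ is \emph{indecomposable} regular, $\Gamma$ is a single cycle (Proposition \ref{prop:udgraphconsequences}), so exactly one part of this partition runs once around the band $b$ and every other part is an open chain. Reordering the basis accordingly makes $\Hom_A(F,X_\mu)$ block diagonal, $D_0\oplus D_1\oplus\dots\oplus D_t$: for $i\ge1$ the chain block $D_i$ is upper triangular and invertible with diagonal entries in $\{\pm1,\pm\lambda,\pm\mu\}$, so $\det D_i=\pm\lambda^{p_i}\mu^{l_i}$, while $D_0$ has the displayed shape with exactly one positive and one negative entry in each row, the wrap-around around $b$ producing the single $-\lambda$ and the single $\mu$, in two distinct rows. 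Expanding $\det D_0$ leaves only the product of the diagonal and the product of the off-diagonal $(t{+}1)$-cycle, which differ precisely by the factors $\lambda$ and $\mu$; thus $\det D_0=\pm\lambda^{p_0}\mu^{l_0}(\lambda-\mu)$ and, multiplying the blocks, $\overline{c}^{X_\lambda}(X_\mu)=\lambda^p\mu^l(\lambda-\mu)$ with $p=\sum_{i\ge0}p_i$, $l=\sum_{i\ge0}l_i$, the overall sign being irrelevant since $\overline{c}^{X_\lambda}$ is only defined up to a nonzero scalar. As a sanity check, this vanishes exactly when $\mu=\lambda$, matching $\overline{c}^{X_\lambda}(X_\mu)\ne 0\iff\Hom_A(X_\lambda,X_\mu)=0\iff\mu\ne\lambda$, which follows from \eqref{eq:ext1}, \eqref{eq:ext1band} and $\langle\langle\dd,\dd\rangle\rangle=0$.

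The hard part is precisely this block decomposition: one must verify that the combinatorics of the paths $l^\delta(v_i^x)$ around the cycle $\Gamma$ yields one and only one cyclic block, that this block really has exactly one positive and one negative entry per row with the $\lambda$ and the $\mu$ landing in distinct rows (so the determinant is a binomial rather than a more complicated polynomial), and that every remaining block is triangular and invertible. This is where connectedness of $\Gamma$ (indecomposability) and its description as a disjoint union of cycles (regularity) are both used, and it is exactly the content of the case-by-case analysis of \cite[Section 3]{Carroll2}.
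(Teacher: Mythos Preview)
Your proposal is correct and follows essentially the same route as the paper: both set up $\Hom_A(F,X_\mu)$ in the up-and-down basis, block-diagonalize it into one cyclic block $D_0$ coming from the band and upper-triangular chain blocks $D_1,\dots,D_t$, and read off $\det D_0=\pm(\lambda-\mu)$ and $\det D_i=\pm\lambda^{p_i}\mu^{l_i}$, deferring the combinatorial bookkeeping to \cite[Section~3]{Carroll2}. One small discrepancy: in the paper's displayed $D_0$ the entries $-\lambda$ and $\mu$ sit in the \emph{same} row (the row indexed by $\Theta(b)$), not in two distinct rows as you state---but this does not affect your determinant computation, since the diagonal transversal still picks up $\lambda$ and the cyclic transversal still picks up $\mu$.
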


We now extend this formula to the general case, i.e., when $\module(A,\dd,\rr)$ is an irreducible regular component (not necessarily indecomposable).  
\begin{lemma}\label{lem:arbitrary-regular}
Suppose $\module(A, \dd, \rr) =\overline{\bigoplus_{i=1}^n\module(A, \dd_i,\rr_i)^{m_i}}$ is the generic decomposition of the irreducible regular component $\module(A,\dd, \rr)$, then for each $i=1,\dotsc, n$, $\langle\langle \dd_i, \dd \rangle\rangle=0$. 
\end{lemma}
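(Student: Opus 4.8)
The plan is to derive the claim from two facts already available: the Euler form vanishes on the dimension vector of any regular component (Corollary \ref{cor:uniqueness}), and $\Ext^1$ vanishes between generic band modules with no common spectral values (equation \eqref{eq:ext1}). The mechanism is that $\langle\langle\dd_i,\dd\rangle\rangle$ is computed by $\Hom$ minus $\Ext^1$ against a generic module, the $\Ext^1$ term gets killed by \eqref{eq:ext1}, and a non-negative integer forced to equal the negative of another non-negative integer must vanish.

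First I would write $\dd' := \dd-\dd_i$; since $m_i\geq 1$ this is a non-negative vector, and $\overline{\bigoplus_{k\neq i}\module(A,\dd_k,\rr_k)^{m_k}\oplus\module(A,\dd_i,\rr_i)^{m_i-1}}$ is, by the Crawley-Boevey--Schr\"oer subset result recalled in Section \ref{sect:modvar}, an irreducible component of $\module(A,\dd')$; each $\module(A,\dd_k,\rr_k)$ is regular, so this component is regular too (its generic module is a direct sum of band modules), and Corollary \ref{cor:uniqueness} gives $\langle\langle\dd',\dd'\rangle\rangle=0$. Together with $\langle\langle\dd,\dd\rangle\rangle=0$ (Corollary \ref{cor:uniqueness} for $\module(A,\dd)$) and $\langle\langle\dd_i,\dd_i\rangle\rangle=0$ (Corollary \ref{cor:uniqueness} for $\module(A,\dd_i)$), expanding $0=\langle\langle\dd_i+\dd',\dd_i+\dd'\rangle\rangle$ by bilinearity yields the antisymmetry relation $\langle\langle\dd_i,\dd'\rangle\rangle=-\langle\langle\dd',\dd_i\rangle\rangle$.

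Next I would choose a generic band module $M_i=M(\dd_i,\rr_i,\lambda)\in\module(A,\dd_i,\rr_i)$ and a generic module $M^\circ$ of the regular component of $\module(A,\dd')$ above, realized respectively as a summand of an up-and-down module $M(\dd,\rr,\underline{\lambda})$ and as the complementary summand (after splitting off one band of type $(\dd_i,\rr_i)$) of an up-and-down module $M(\dd,\rr,\underline{\mu})$, with $\underline{\lambda},\underline{\mu}\in(k^*)^B$ having no coordinate in common. Then \eqref{eq:ext1} forces $\Ext^1_A(M_i,M^\circ)=0$ and $\Ext^1_A(M^\circ,M_i)=0$. Since band modules and direct sums of band modules have projective dimension at most one, the identity $\langle\langle\ddim X,\ddim Y\rangle\rangle=\dim_k\Hom_A(X,Y)-\dim_k\Ext^1_A(X,Y)$ valid for $\pdim X\leq 1$ gives $\langle\langle\dd_i,\dd'\rangle\rangle=\dim_k\Hom_A(M_i,M^\circ)\geq 0$ and $\langle\langle\dd',\dd_i\rangle\rangle=\dim_k\Hom_A(M^\circ,M_i)\geq 0$. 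By the antisymmetry relation these two non-negative integers sum to $0$, so both are $0$, and therefore $\langle\langle\dd_i,\dd\rangle\rangle=\langle\langle\dd_i,\dd_i\rangle\rangle+\langle\langle\dd_i,\dd'\rangle\rangle=0$, as desired.

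The step needing the most care is the identification in the third paragraph: one must check that a generic module of $\module(A,\dd')$ genuinely occurs as the complement of one $(\dd_i,\rr_i)$-band inside an up-and-down module for $(\dd,\rr)$, so that \eqref{eq:ext1} applies verbatim. This rests on the fact from Section \ref{sec:gentle-sec} that the band components of $\Gamma(\dd,\rr)$ reproduce the generic decomposition of $\module(A,\dd,\rr)$, so that erasing one band of type $(\dd_i,\rr_i)$ leaves a graph whose band components are exactly those of the generic decomposition of $\module(A,\dd')$. (Alternatively, the two $\Ext^1$-vanishings follow from the Crawley-Boevey--Schr\"oer criterion that across-summand generic extensions vanish in a generic decomposition; this is cleaner but invokes a statement not quoted above.) Everything else is formal manipulation of the Euler form.
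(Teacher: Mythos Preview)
Your argument is correct, but the paper takes a somewhat shorter path. Rather than splitting off a single copy of $\dd_i$ and invoking Corollary~\ref{cor:uniqueness} for each of $\dd$, $\dd_i$, $\dd'$ to obtain the antisymmetry relation $\langle\langle\dd_i,\dd'\rangle\rangle=-\langle\langle\dd',\dd_i\rangle\rangle$, the paper just expands $\langle\langle\dd_i,\dd\rangle\rangle=\sum_j m_j\langle\langle\dd_i,\dd_j\rangle\rangle$, drops the $j=i$ term (which is zero), and uses the one-sided $\Ext^1$-vanishing from~\eqref{eq:ext1} to see that each remaining summand is $\dim_k\Hom_A(X^{(i)},X^{(j)})\geq 0$. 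Then the single identity $0=\langle\langle\dd,\dd\rangle\rangle=\sum_i m_i\langle\langle\dd_i,\dd\rangle\rangle$ kills all of them at once. So the paper needs $\Ext^1$-vanishing in only one direction and Corollary~\ref{cor:uniqueness} only for $\dd$ and $\dd_i$, whereas your route trades that economy for handling the indices $i$ independently of one another. Your self-flagged caveat about $M^\circ$ being realizable as a complementary summand of an up-and-down module is genuinely what makes~\eqref{eq:ext1} applicable in your version, and you are right that it follows from the correspondence between band components of $\Gamma(\dd,\rr)$ and the generic decomposition; the paper sidesteps this by working with the individual indecomposable bands $X^{(j)}$ directly.
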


\begin{proof}
Recall from equation \ref{eq:ext1} that $\Ext^1_A(M(\dd, \rr, \underline{\lambda}), M(\dd, \rr,\underline{\lambda'}))=0$ when $\underline{\lambda}, \underline{\lambda'}$ share no common components.  In particular, $\Ext^1_A(M(\dd_i,\rr_i,\lambda_i), M(\dd_j, \rr_j,\lambda_j))=0$ for $i\neq j$ (again, assuming $\lambda_i\neq \lambda_j$).  Moreover, by definition $\module(A, \dd_i,\rr_i)$ is a regular irreducible component, so $\langle\langle \dd_i, \dd_i\rangle\rangle=0$. 

 Let $\{\lambda_j; j=1,\dotsc, n\}$ be a collection of pairwise distinct elements in $k^*$, and abbreviate $M(\dd_j,\rr_j, \lambda_j)$ by $X^{(j)}$.  Then for any given $i$, $\langle\langle \dd_i, \dd\rangle \rangle \geq 0$ since 
\begin{align*}
\langle\langle \dd_i, \dd \rangle\rangle &= \sum\limits_{j=1}^n
m_j \langle\langle \dd_i, \dd_j\rangle\rangle \\ &=
\sum\limits_{j\neq i} m_j \langle\langle\dd_i, \dd_j\rangle\rangle\\
&= \sum\limits_{j\neq i} m_j (\dim \Hom(X^{(i)}, X^{(j)}) -  \dim
\Ext^1_A(X^{(i)}, X^{(j)})\\
&= \sum\limits_{j\neq i} m_j \dim \Hom_A(X^{(i)}, X^{(j)}) \geq 0.
\end{align*}
However, from Corollary {~\ref{cor:uniqueness}}, $0=\langle\langle
\dd, \dd \rangle \rangle = \sum_{i=1}^n m_i\langle \langle \dd_i,
\dd\rangle\rangle$, and so $\langle \langle \dd_i, \dd \rangle \rangle
=0$ since $m_i\geq 1$.  
\end{proof}

\begin{remark}\label{rk:generalschofield} Given this proposition, the generalized Schofield invariant $\overline{c}^{M(\dd_i,\rr_i,\lambda)}$ is a non-zero semi-invariant function on the regular component $\module(A, \dd,\rr)$.
\end{remark}

Therefore, with some extra considerations (namely, the block diagonal form of $\Hom_A(F, M)$), Proposition {~\ref{prop:semiinvariantband}} can be applied more generally to irreducible components.

\begin{prop}
  \label{prop:arbitrary-bands}
Suppose again that $\module(A, \dd, \rr) =\overline{\bigoplus_{i=1}^n\module(A, \dd_i,\rr_i)^{\oplus m_i}}$ is the generic decomposition of the irreducible regular component $\module(A, \dd, \rr)$ and for $\underline{\mu}=(\mu(i,j); i=1,\dotsc, n,\ j=1,\dotsc, m_i,\ \mu(i,j)\in k^*)$ consider the generic module
  \begin{equation*}
    X_{\underline{\mu}}=\bigoplus\limits_{i=1}^n \left(\bigoplus\limits_{j=1}^{m_i} M(\dd_i,\rr_i, \mu(i,j))\right)
  \end{equation*}
  Then for any $\lambda \in k^*$,
  \begin{equation}
    \label{eq:arbitrary-band}
    \overline{c}^{M(\dd_i, \rr_i, \lambda)}(X_{\underline{\mu}})=
    \left[\prod\limits_{j=1}^{m_i} \lambda^{p_i} \mu(i,j)^{l_i}
      (\lambda-\mu(i,j))\right] \cdot \left[\prod\limits_{i'\neq
        i}\prod\limits_{j=1}^{m_{i'}} \lambda^{p_{i'}} \mu(i',j)^{l_{i'}}\right]
  \end{equation}
for some positive integers $p_i, l_i$.
\end{prop}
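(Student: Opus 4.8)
The plan is to deduce the formula from Proposition~\ref{prop:semiinvariantband} by exploiting the multiplicativity of generalized Schofield semi-invariants in the module argument, and then to analyze the ``cross terms'' $\overline{c}^{M(\dd_i,\rr_i,\lambda)}(M(\dd_{i'},\rr_{i'},\mu))$ with $i'\neq i$. Fix the minimal projective resolution $P_1\xrightarrow{F}P_0\to M(\dd_i,\rr_i,\lambda)\to 0$ of Section~\ref{subsec:reg-irr-cpts}; since $\pdim M(\dd_i,\rr_i,\lambda)=1$, one has $\theta^{M(\dd_i,\rr_i,\lambda)}=\langle\langle\dd_i,-\rangle\rangle$. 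The first step is to sharpen Lemma~\ref{lem:arbitrary-regular}: the chain of equalities in its proof shows $\langle\langle\dd_i,\dd_j\rangle\rangle=\dim_k\Hom_A(X^{(i)},X^{(j)})\ge 0$ for $j\neq i$, while $\langle\langle\dd_i,\dd_i\rangle\rangle=0$ and $\sum_j m_j\langle\langle\dd_i,\dd_j\rangle\rangle=\langle\langle\dd_i,\dd\rangle\rangle=0$; as every $m_j\ge 1$ and every term is nonnegative, each $\langle\langle\dd_i,\dd_j\rangle\rangle$ vanishes. Hence $\theta^{M(\dd_i,\rr_i,\lambda)}(\ddim M(\dd_{i'},\rr_{i'},\mu(i',j)))=0$ for all $i',j$, so each $\Hom_A(F,M(\dd_{i'},\rr_{i'},\mu(i',j)))$ is a \emph{square} matrix, and additivity of $\Hom_A(F,-)$ in the second argument makes $\Hom_A(F,X_{\underline{\mu}})$ block diagonal with one such block per summand of $X_{\underline{\mu}}$. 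Taking determinants,
\begin{equation*}
  \overline{c}^{M(\dd_i,\rr_i,\lambda)}(X_{\underline{\mu}})=\prod_{i'=1}^{n}\prod_{j=1}^{m_{i'}}\overline{c}^{M(\dd_i,\rr_i,\lambda)}\bigl(M(\dd_{i'},\rr_{i'},\mu(i',j))\bigr),
\end{equation*}
and it remains to evaluate each pairwise factor.

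For $i'=i$, Proposition~\ref{prop:semiinvariantband} gives the factor $\lambda^{p_i}\mu(i,j)^{l_i}(\lambda-\mu(i,j))$ with $p_i=p(\dd_i,\rr_i)$, $l_i=l(\dd_i,\rr_i)$. For $i'\neq i$ I would show that the factor $g(\lambda,\mu):=\overline{c}^{M(\dd_i,\rr_i,\lambda)}(M(\dd_{i'},\rr_{i'},\mu))$ is a nonzero monomial. It is a polynomial in $\lambda$ and $\mu$: by the description in Section~\ref{subsec:reg-irr-cpts}, every entry of $\Hom_A(F,M(\dd_{i'},\rr_{i'},\mu))$ is a value in the band module $M(\dd_{i'},\rr_{i'},\mu)$ of one of the paths $l^{\pm}$, possibly rescaled by $\lambda$, hence a monomial in $\lambda$ and $\mu$. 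It has no zero on $(k^*)^2$: the exact sequence $0\to\Hom_A(M(\dd_i,\rr_i,\lambda),Y)\to\Hom_A(P_0,Y)\xrightarrow{\Hom_A(F,Y)}\Hom_A(P_1,Y)$ shows that $g(\lambda,\mu)=\det\Hom_A(F,M(\dd_{i'},\rr_{i'},\mu))\neq 0$ if and only if $\Hom_A(M(\dd_i,\rr_i,\lambda),M(\dd_{i'},\rr_{i'},\mu))=0$, which, as $\theta^{M(\dd_i,\rr_i,\lambda)}(\dd_{i'})=0$ forces $\dim_k\Hom=\dim_k\Ext^1$, is equivalent to $\Ext^1_A(M(\dd_i,\rr_i,\lambda),M(\dd_{i'},\rr_{i'},\mu))=0$; this vanishing holds for distinct parameters by \eqref{eq:ext1} (as in Lemma~\ref{lem:arbitrary-regular}) and, because $\dd_i\neq\dd_{i'}$ forces $M(\dd_i,\rr_i,\lambda)$ and $M(\dd_{i'},\rr_{i'},\mu)$ to be band modules of distinct bands, it persists even when $\lambda=\mu$ since the relevant $\Hom$- and $\Ext^1$-spaces are then computed by graph maps not involving the band parameters \cite{BR}. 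A polynomial over the algebraically closed field $k$ with no zero on $(k^*)^2$ is a nonzero scalar times $\lambda^{a}\mu^{b}$: each of its irreducible factors of positive degree cuts out an irreducible hypersurface contained in $V(\lambda\mu)$, hence equals a scalar multiple of $\lambda$ or of $\mu$.

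It remains to identify the exponents $a,b$ with $p_{i'}$ and $l_{i'}$, and this is the step where real combinatorial work is needed: one has to push the block-diagonal normal form of $\Hom_A(F,M)$ worked out in \cite[Section~3]{Carroll2} through the case in which $M=M(\dd_{i'},\rr_{i'},\mu)$ is a band module of a component \emph{different} from $\module(A,\dd_i,\rr_i)$. The picture is that the bands $b_i$ and $b_{i'}$ lie in different connected components of the relevant up and down graphs, so the ``cyclic'' part of $\Hom_A(F,M(\dd_{i'},\rr_{i'},\mu))$ that in the equal-band case closes up to produce a factor $(\lambda-\mu)$ instead merely splits into chains (consistent with the monomial shape already established), while the numbers of matrix entries carrying a factor $\lambda$ (contributed by the vertex $\Theta(b_i)$) and a factor $\mu$ (contributed by $\Theta(b_{i'})$) are governed by the same counts that produce $p(\dd_{i'},\rr_{i'})$ and $l(\dd_{i'},\rr_{i'})$ in Proposition~\ref{prop:semiinvariantband} for $\module(A,\dd_{i'},\rr_{i'})$. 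Reassembling the pairwise factors then yields \eqref{eq:arbitrary-band}, with the $p_i,l_i$ the integers of Proposition~\ref{prop:semiinvariantband}; their positivity can be read off the explicit diagonal blocks $D_0,\dots,D_t$. I expect this last bookkeeping — disentangling, once the two bands are decoupled, which paths of the resolution of $M(\dd_i,\rr_i,\lambda)$ meet $\Theta(b_i)$ and which path-values in $M(\dd_{i'},\rr_{i'},\mu)$ meet $\Theta(b_{i'})$ — to be the main obstacle; everything else is formal.
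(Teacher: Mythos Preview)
Your approach is essentially the one the paper has in mind: the paper's entire ``proof'' is the sentence preceding the proposition, which says that with the block-diagonal form of $\Hom_A(F,M)$ one can carry Proposition~\ref{prop:semiinvariantband} over to arbitrary regular components. Your multiplicativity step (reducing to pairwise factors via the block decomposition, after checking each block is square using $\langle\langle\dd_i,\dd_{i'}\rangle\rangle=0$) and your use of Proposition~\ref{prop:semiinvariantband} on the diagonal pieces are exactly this.

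Where you diverge slightly is in the treatment of the cross terms $i'\neq i$. The paper implicitly relies on the explicit combinatorial block-diagonal description of $\Hom_A(F,M)$ from \cite{Carroll2} to read off the monomial form directly. Your route---showing $g(\lambda,\mu)$ is a polynomial with no zeros on $(k^*)^2$ via the $\Hom/\Ext^1$ vanishing, hence a monomial---is more conceptual and perfectly valid; it avoids reopening the combinatorics.

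Your self-identified ``main obstacle,'' however, is not one. The proposition only asserts the formula \emph{for some} integers $p_{i'},l_{i'}$; it does not claim these coincide with the constants $p(\dd_{i'},\rr_{i'}),\,l(\dd_{i'},\rr_{i'})$ of Proposition~\ref{prop:semiinvariantband}, and indeed nothing in the paper's notation forces the cross-term exponents to be independent of the fixed index $i$. More to the point, look at how the formula is actually used in the proof of Theorem~\ref{mainthm-2}: in the ratio $\overline{c}^{M(\dd_i,\rr_i,\lambda(i,j))}/\overline{c}^{M(\dd_i,\rr_i,\lambda(i,j+1))}$ the cross-term monomials contribute only powers of $\lambda(i,j)$, $\lambda(i,j+1)$, and the $\mu(i',j')$, all of which cancel when one compares the ratios at $X_{\underline{\mu}}$ and $X_{\underline{\nu}}$. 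The precise exponents are irrelevant. So once you have established that each cross factor is a nonzero monomial $c\,\lambda^{a}\mu^{b}$, you are done; the bookkeeping you anticipate is unnecessary.
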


This allows us to prove our main constructive theorem:

\begin{proof}[Proof of Theorem \ref{mainthm-2}] 
We need only exhibit an open set $U \subset \module(A, \dd, \rr)$ on which the
functions separate orbits.  In this case,
\cite[Lemma 2.1]{PopVin} implies that
$$k(\module(A, \dd, \rr))^{\GL(\dd)} \cong
k\left(\frac{\overline{c}^{M(\dd_i,\rr_i,\lambda(i,j))}}{\overline{c}^{M(\dd_i,\rr_i,\lambda(i,j+1))}}
  ; i=1,\dotsc, n,\ j=0,\dotsc, m_i-1\right).$$  There are clearly
$N=\sum m_i$ such functions, from which we can choose a subset that
forms a transcendental basis.  By Theorem {~\ref{mainthm-1}}, $N$ is
the transcendence degree of $k(\module(A, \dd, \rr))^{\GL(\dd)}$ over
$k$, but taking any fewer elements from the set will yield a smaller
transcendence degree, a contradiction.

Now we prove the first claim. Notice that as the image of the dominant map
  $\GL(\dd)\times (k^*)^B \rightarrow \module(A, \dd, \rr)$, the set
  $\bigcup_{\mu \in (k^*)^B} \GL(\dd)\cdot M(\dd, \rr,\mu)$ is
  constructible, so it contains an open dense set $U'$ of $\module(A,
  \dd,\rr)$.  Take $U$ to be the open set 
  $U=\bigcap_{i, j}\{X\in
  \module(A, \dd, \rr) \mid
  \overline{c}^{M(\dd_i,\rr_i,\lambda(i,j))}\neq 0\}\cap U'$, and let
  $X_{\underline{\mu}}, X_{\underline{\nu}}$ be two generic modules in $U$ such
  that for each $i=1,\dotsc, n,\ j=0,\dotsc, m_i-1$,
  \begin{equation*}
    \frac{\overline{c}^{M(\dd_i,\rr_i,\lambda(i,j))}(X_{\underline{\mu}})}{\overline{c}^{M(\dd_i,\rr_i, \lambda(i,j+1))}(X_{\underline{\mu}})}=\frac{\overline{c}^{M(\dd_i,\rr_i,\lambda(i,j))}(X_{\underline{\nu}})}{\overline{c}^{M(\dd_i,\rr_i, \lambda(i,j+1))}(X_{\underline{\nu}})}.
  \end{equation*}
Expanding these by applying Equation {~\ref{eq:arbitrary-band}}, for each, we have
  \begin{align*}
      \frac{\left[ \prod\limits_{j'=1}^{m_i}
          \lambda(i,j)^{p_i}\mu(i,j')^{l_i}(\lambda(i,j)-\mu(i,j'))\right]\cdot
        \left[\prod\limits_{i'\neq i} \prod\limits_{j'=1}^{m_{i'}} \lambda(i,j)^{p_{i'}}\mu(i',j')^{l_{i'}}\right]}{\left[\prod\limits_{j'=1}^{m_i}
          \lambda(i,j+1)^{p_i}\mu(i,j')^{l_i}(\lambda(i,j+1)-\mu(i,j'))\right]\cdot
        \left[\prod\limits_{i'\neq i} \prod\limits_{j'=1}^{m_{i'}}
          \lambda(i,j+1)^{p_{i'}}\mu(i',j')^{l_{i'}}\right]}\\ \qquad=  \frac{\left[ \prod\limits_{j'=1}^{m_i}
          \lambda(i,j)^{p_i}\nu(i,j')^{l_i}(\lambda(i,j)-\mu(i,j'))\right]\cdot
        \left[\prod\limits_{i'\neq i} \prod\limits_{j'=1}^{m_{i'}} \lambda(i,j)^{p_{i'}}\nu(i',j')^{l_{i'}}\right]}{\left[\prod\limits_{j'=1}^{m_i}
          \lambda(i,j+1)^{p_i}\nu(i,j')^{l_i}(\lambda(i,j+1)-\nu(i,j'))\right]\cdot
        \left[\prod\limits_{i'\neq i} \prod\limits_{j'=1}^{m_{i'}} \lambda(i,j+1)^{p_{i'}}\nu(i',j')^{l_{i'}}\right]}
  \end{align*}
  This is equivalent to the existence of a constant $c_i$ for each
  $i=1,\dotsc, n$ with 
  \begin{equation*}
    \frac{\prod\limits_{j'=1}^{m_i}
      (\lambda(i,j)-\mu(i,j'))}{\prod\limits_{j'=1}^{m_i}
      (\lambda(i,j)-\nu(i,j'))} = c_i
  \end{equation*}
  for each $j=0,\dotsc, n$.  In particular, for each $i$, $\prod\limits_{j'=1}^{m_i}
  (x-\mu(i,j')) - c_i \prod\limits_{j'=1}^{m_i} (x-\nu(i,j'))$ is a
  degree $n$ polynomial with $n+1$ distinct roots (by selection of the
  $\lambda(i, j)$), and so for all $i$, $c_i=1$.  Thus, for each $i$ there is
  a permutation $\sigma^{(i)}$ of $\{1,\dotsc, m_i\}$ such that
  $\mu(i,j')=\mu(i,\sigma^{(i)}(j'))$.  In particular, $X_{\underline{\mu}} \cong X_{\underline{\nu}}$.
\end{proof}

\begin{example} Finally, we point out some pathologies that show the necessity of the conditions of Proposition {~\ref{reduction-rational-inv-prop}}.
\label{ex:pathologies}
\begin{enumerate}
\renewcommand{\theenumi}{\roman{enumi}}

\item Consider the gentle algebra $A(2)=kQ/I_c$ where the bound quiver and coloring are indicated below:
\begin{equation*}
      \xymatrix{1 \ar@/^/[r]^{a_1} \ar@{-->}@/_/[r]_{b_1} & 2 \ar@/^/[r]^{a_2}
        \ar@{-->}@/_/[r]_{b_2} & 3}.
\end{equation*}
Let $\dd=(2,2,2)$ and $\rr(a_i)=\rr(b_i)=1$ for $i=1,2$. The generic module in this case is the module
\begin{equation*}
      \xymatrix@R=.1cm{ \circ \ar[r] & \circ \ar@{-->}[r] & \circ\\
        \circ \ar@{-->}[r] & \circ \ar[r] & \circ}.
\end{equation*}
Let $\rr'$ be the rank function $\rr'(a_1)=\rr'(b_2)=1$ and $\rr'(a_2)=\rr'(b_1)=0$, and $\rr''$ be the rank function $\rr''(a_2)=\rr''(b_1) = 1$ and $\rr''(a_1)=\rr''(b_2) =0$. Then the generic decomposition of $\module(A, \dd, \rr)$ is $\overline{\module(A, (1,1,1),\rr') \oplus \module(A, (1,1,1), \rr'')}$. Thus, $\module(A, \dd,\rr)$ decomposes generically into the direct sum of two distinct orbit closures of the same dimension vector.  This is in contrast to the situation in which $\module(A, \dd, \rr)$ is a regular component (see Corollary {~\ref{cor:uniqueness}}).

\item Indecomposable generic modules are not necessarily Schur. Consider the representation space of the bound quiver with coloring, dimension vector, and rank sequence as depicted in the diagram:
\begin{equation*}
          \xymatrix{*++[F]{1} \ar[dr]_1 \ar@{-->}[r]^1 &
            *++[F]{2} \ar@{-->}[r]^1 & *++[F]{1} \\ & *++[F]{1}
            \ar@{..>}[ur]_1 & }
\end{equation*}
The generic module $X$ in this module variety takes the form
\begin{equation*}
         \xymatrix@R=.1cm{\circ \ar@{-->}[r] \ar[ddddr] & e_1 &\circ  \\ & e_2
            \ar@{-->}[ur] &  \\ && \\ && \\  & \circ \ar@{..>}[uuuur] &}
\end{equation*}
where $e_1, e_2$ is a basis for the 2-dimensional vector space in the top row.  There is a non-trivial endomorphism $X\rightarrow X$ given on basis elements by $e_2\mapsto e_1$ and all other basis elements are sent to 0.
\item It is possible that in $\module(A, \dd)$ there are two distinct indecomposable irreducible components, one of which is regular.  Consider the module variety of
\begin{equation*}
          \xymatrix{& \circ \ar@{~>}[dr] & \circ \ar@{-->}[dr] & \\ \circ \ar[r]_{a_1}
            \ar@{-->}[ur] & \circ \ar[r]_{a_2} \ar@{..>}[ur] & \circ\ar[r]_{a_3} & \circ} 
\end{equation*}
with dimension vector $\dd=(1,1,\dotsc, 1)$, and the following two rank functions: $\rr$ defined by $\rr(a_2)=0$ and all other ranks are one, and $\rr'$ defined by $\rr'(a_1)=\rr'(a_3)=0$ and all other ranks are one.  It is clear that these are maximal with respect to $\dd$.  The generic modules are given below:
\begin{equation*}
          \xymatrix{& \circ \ar@{~>}[dr] & \circ \ar@{-->}[dr] & \\ \circ \ar[r]
            \ar@{-->}[ur] & \ar@{..>}[ur] & \circ\ar[r]& \circ} \qquad
          \xymatrix{& \circ \ar@{~>}[dr] & \circ \ar@{-->}[dr] & \\ \circ
            \ar@{-->}[ur] & \circ \ar[r] \ar@{..>}[ur] & \circ & \circ} 
\end{equation*}
Notice that the generic module in $\module(\dd,\rr)$ is an indecomposable band, and that in $\module(\dd,\rr')$ is an indecomposable string.  

\item Consider the triangular string algebra $A=kQ/I$ where $Q$ is the quiver
  \begin{align*}
   \xymatrix{1 \ar[dd]_c \ar[dr]^a && 2\ar[dd]^d \ar[dl]_b\\ & 3 \ar[dr]_f \ar[dl]^e& \\ 4 & & 5}
  \end{align*}
 
and $I$ is the ideal generated by $ea, eb, fa, fb$. We call $A$ the \emph{butterfly} string algebra. Notice that $A$ is not a colored gentle algebra. 

It has been proved in \cite[Section 4.1]{CC12} that there is only one indecomposable irreducible regular component $C$, namely the one corresponding to the band $B=c^{-1}ef^{-1}db^{-1}a$ and dimension vector $\dd=(1,1,2,1,1)$, and $C$ is not a Schur component. Nonetheless, it has been proved in \cite[Theorem 4.1]{CC12} that for any weight $\theta \in \ZZ^5$ with $C^{ss}_{\theta} \neq \emptyset$, the moduli space $\M(C)^{ss}_{\theta}$ is just a point.
\end{enumerate}
\end{example}

\section{Concluding remarks}\label{remarks-sec}
We conclude this section by putting these results in the the broader context of a program aimed at characterizing  tame algebras and parametrizing modules over such algebras  using invariant-theoretic techniques.  It was Jerzy Weyman in private communication who first made the following conjecture.

\begin{conjecture}[\textbf{Weyman}]\label{conj:Weyman}
Let $(Q, I)$ be a bound quiver, and $A=kQ/I$ its bound quiver algebra. Then the following are equivalent:
\begin{enumerate}
\item $A$ is of tame representation type;
\item for any irreducible component $C\subset \module(A, \dd)$ and any weight $\theta$ such that $C_\theta^{ss}\neq \emptyset$, $\M(C)_\theta^{ss}$ is a product of projective spaces.
\end{enumerate}
\end{conjecture}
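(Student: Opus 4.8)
The plan is to attack the two implications by quite different means, since $(1)\Rightarrow(2)$ is within reach of the techniques assembled in this paper while $(2)\Rightarrow(1)$ carries the genuinely new content.

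For $(1)\Rightarrow(2)$, the plan is to upgrade, step by step, the argument used here for triangular gentle algebras (and, by Bobi\'nski--Skowro{\'n}ski, for tame quasi-tilted algebras) so that it applies to an arbitrary tame $A$. Fix a $\theta$-semistable irreducible component $C\subseteq\module(A,\dd)$. First I would establish that $C$ is $\theta$-well-behaved and that the hypotheses of Theorem~\ref{theta-stable-decomp-thm} hold, or else prove a variant of that reduction theorem that does not require $C$ to be normal; this reduces the problem to the $\theta$-stable irreducible components $C_1,\dots,C_n$ appearing in the $\theta$-stable decomposition $C=m_1\cdot C_1\pp\dots\pp m_n\cdot C_n$. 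Next, for each $\theta$-stable $C_i$ I would invoke the Dowbor--Skowro{\'n}ski parameterization, exactly as in Lemma~\ref{dim-count-irr-comp} and Proposition~\ref{prop:tamerationality}: either $C_i$ is an orbit closure, so $\M(C_i)^{ss}_\theta$ is a point, or $C_i=\overline{\bigcup_\lambda\GL(\dd_i)f(\lambda)}$ is one-parameter, in which case $\dim\M(C_i)^{ss}_\theta\le 1$ and one argues as in Proposition~\ref{prop:tamerationality}(2) --- and, where normality of $C_i$ fails, via the Zariski-main-theorem trick of Corollary~\ref{cor:string-moduli} --- that $\M(C_i)^{ss}_\theta$ is a point or $\PP^1$. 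Since $S^m$ of a point is a point and $S^m(\PP^1)\cong\PP^m$, Theorem~\ref{theta-stable-decomp-thm} then exhibits $\M(C)^{ss}_\theta$ as a product of projective spaces. Within this direction the real difficulties are the normality hypothesis in Theorem~\ref{theta-stable-decomp-thm}, which can fail for general tame algebras, and the case of a one-parameter $C_i$ with $C_{i,\theta}^{ss}\ne\emptyset$ but $C_{i,\theta}^s=\emptyset$, where the quotient can collapse to a point for subtle reasons (as in the butterfly example, Example~\ref{ex:pathologies}(iv)); I expect these to be handled by a finer GIT analysis of the one-parameter family, possibly by applying the $\theta$-stable decomposition one level deeper.

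For $(2)\Rightarrow(1)$, equivalently ``$A$ wild $\Rightarrow$ some $\M(C)^{ss}_\theta$ is not a product of projective spaces'', the plan is to exploit the defining property of wildness: a representation embedding of the category of finite-dimensional $k\langle x,y\rangle$-modules (or of the modules over some wild hereditary $kQ$) into $\module(A)$. I would then (i) produce, for a concrete wild hereditary algebra --- natural candidates being the generalized Kronecker quivers $K_m$, $m \geq 3$ --- an explicit dimension vector $\dd$ and weight $\theta$ for which $\M(kQ,\dd)^{ss}_\theta$ is provably \emph{not} a product of projective spaces, and (ii) transport this moduli space along the representation embedding, checking that semistability, the irreducible-component structure, and the GIT quotient are sufficiently preserved to contradict $(2)$ for $A$. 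Part (i) is where one would want an invariant of $\M(C)^{ss}_\theta$ --- its Picard rank, its Betti numbers, or the combinatorics of its nef cone --- that is constrained when the space is a product of projective spaces, combined with a lower bound on the geometric complexity of moduli forced by wildness.

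The main obstacle is step (i) of the second implication. This is essentially the assertion that wild module categories have ``bad'' moduli spaces, and it is intertwined with the decades-old rationality problem for wild quivers recalled in the introduction: at present there is no general mechanism for certifying that a moduli space of representations fails to be a product of projective spaces, and even a single wild hereditary example would require computing a moduli space outside the range where the determinantal semi-invariant coordinates of this paper give effective control. I expect progress here to hinge either on a new geometric invariant of $\M(C)^{ss}_\theta$, or on a structural result to the effect that every moduli space of semistable modules over a tame algebra is toric --- which would both streamline $(1)\Rightarrow(2)$ and sharpen the target for the converse.
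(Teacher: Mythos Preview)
The statement you are attempting to prove is Conjecture~\ref{conj:Weyman}, and it is genuinely a conjecture: the paper does \emph{not} prove it. The paper explicitly presents it as open, remarks that $(2)\Rightarrow(1)$ has been verified only for quasi-tilted and strongly simply connected algebras (in \cite{CC10}), and that $(1)\Rightarrow(2)$ is known only in special cases (isotropic components for tame quasi-tilted algebras, regular components for triangular gentle algebras via Theorem~\ref{mainthm-1}(2)). There is therefore no ``paper's own proof'' to compare your proposal against.

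Your write-up is not a proof either, and to your credit you do not pretend otherwise: it is a research plan with clearly flagged obstacles. For $(1)\Rightarrow(2)$ your outline tracks exactly the strategy the paper uses in the gentle case and identifies precisely the two places where it breaks down in general --- the normality hypothesis needed for Theorem~\ref{theta-stable-decomp-thm}, and the possibility that $C_{i,\theta}^{ss}\neq\emptyset$ while $C_{i,\theta}^{s}=\emptyset$ (the paper raises the latter as an open Question immediately after Conjecture~\ref{conj-weak:Weyman}). For $(2)\Rightarrow(1)$ you yourself name the main obstacle: producing even one wild hereditary moduli space that is provably not a product of projective spaces runs into the long-standing rationality problem for wild quivers mentioned in the paper's introduction, and no current technique delivers such an example. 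So the honest summary is that your proposal is a sensible sketch of how one might try to attack the conjecture, largely consonant with the hints the paper gives, but it is not a proof and the paper contains none.
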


The implication $(2) \Longrightarrow (1)$ of this conjecture has been verified for the class of quasi-tilted algebras and of strongly simply connected algebras (see \cite{CC10}). 

The other implication $(1)\Longrightarrow (2)$ of the conjecture above has been verified in \cite{CC10} for quasi-tilted algebras when the irreducible components involved are isotropic. Since gentle algebras are tame, Theorem \ref{mainthm-1}{(2)} proves this implication over regular irreducible components for triangular gentle algebras. In order to prove it over arbitrary irreducible components, one would need to have a better understanding of the $\theta$-stable decomposition of irreducible components of module varieties. We plan to address this in future work.

A particular case of Conjecture \ref{conj:Weyman} is:

\begin{conjecture}\label{conj-weak:Weyman} Let $A$ be a tame algebra, $\dd$ a generic root of $A$, and $C \subseteq \module(A,\dd)$ an indecomposable irreducible component. Then, for any weight $\theta$ for which $C^{ss}_{\theta} \neq \emptyset$, the moduli space $\M(C)^{ss}_{\theta}$ is either a point or $\mathbb P^1$. 
\end{conjecture}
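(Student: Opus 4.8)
The plan is to pin down $\M(C)^{ss}_\theta$ by soft dimension-theoretic and unirationality arguments, which will force it to be an integral rational projective curve, and then to isolate the single hard point, namely its normality. First, if $C$ is an orbit closure then $\M(C)^{ss}_\theta$ is a point and there is nothing to prove, so by the structure theory recalled in Section~\ref{sect:modvar} we may assume $C=\overline{\bigcup_{\lambda\in\mathcal U}\GL(\dd)f(\lambda)}$ for a parametrizing pair $(\mathcal U\subseteq k^*,f)$. Exactly as in the proof of Proposition~\ref{prop:tamerationality}(1), if $\pi\colon C^{ss}_\theta\to\M(C)^{ss}_\theta$ is the quotient morphism then for generic $M_0\in C^{ss}_\theta$ one has $\dim C-\dim\M(C)^{ss}_\theta=\dim\pi^{-1}(\pi(M_0))\ge\dim\overline{\GL(\dd)M_0}$, so Lemma~\ref{dim-count-irr-comp} yields $\dim\M(C)^{ss}_\theta\le 1$; and since $\M(C)^{ss}_\theta=\Proj\bigl(\bigoplus_{n\ge 0}\SI(C)_{n\theta}\bigr)$ is the $\Proj$ of a graded domain, it is integral. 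Hence it is either a point (done) or an integral projective curve, which we assume from now on.

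Next, rationality. Because $C^{ss}_\theta$ is dense in $C$ and $\bigcup_\lambda\GL(\dd)f(\lambda)$ is dense in $C$, a generic parameter has $f(\lambda)\in C^{ss}_\theta$; fix a nonempty open $\mathcal U_0\subseteq\mathcal U\subseteq\mathbb A^1$ with $f(\mathcal U_0)\subseteq C^{ss}_\theta$. The map $\GL(\dd)\times\mathcal U_0\to\M(C)^{ss}_\theta$, $(g,\lambda)\mapsto\pi(g\cdot f(\lambda))=\pi(f(\lambda))$, is dominant (it is the dominant morphism $\GL(\dd)\times\mathcal U_0\to C^{ss}_\theta$ followed by the surjection $\pi$) and it factors through the projection onto $\mathcal U_0$, so $\lambda\mapsto\pi(f(\lambda))$ is a dominant morphism $\mathcal U_0\to\M(C)^{ss}_\theta$. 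Thus $\M(C)^{ss}_\theta$ is unirational, and being a curve over a field of characteristic zero it is \emph{rational} by L\"uroth. It therefore remains only to prove that this rational integral projective curve is smooth, equivalently normal, equivalently isomorphic to $\PP^1$.

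For normality I would argue in two cases. If $C^s_\theta\ne\emptyset$, then $\M(C)^{ss}_\theta$ is a good quotient of the open subset $C^{ss}_\theta\subseteq C$; if $C$ is a normal variety then so is $C^{ss}_\theta$, hence so is its good quotient, and a normal rational projective curve is $\PP^1$ --- this is precisely the mechanism of Proposition~\ref{prop:tamerationality}(2) and Corollary~\ref{cor:schurbands}(2), where normality of $C$ is supplied by the theory of varieties of complexes. If $C^s_\theta=\emptyset$, pass instead to the $\theta$-stable decomposition $C=m_1\cdot C_1\pp\cdots\pp m_n\cdot C_n$: each $C_i$ is a $\theta$-stable irreducible component, hence indecomposable and with $C_{i,\theta}^s\ne\emptyset$, so by the first case each $\M(C_i)^{ss}_\theta$ is a point or $\PP^1$; and if $C$ is normal, $\theta$-well-behaved, and $\overline{\bigoplus_i C_i^{\oplus m_i}}\subseteq C$, then Theorem~\ref{theta-stable-decomp-thm} gives $\M(C)^{ss}_\theta\cong\prod_i S^{m_i}(\M(C_i)^{ss}_\theta)$, whose dimension is at most $1$, forcing exactly one factor to be positive-dimensional and of the form $S^1(\PP^1)=\PP^1$ with the remaining factors points. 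In both cases $\M(C)^{ss}_\theta$ is a point or $\PP^1$.

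The main obstacle is thus not the geometry of curves but the \emph{normality} hypothesis on $C$ (and on the pieces $C_i$), together with the auxiliary hypotheses --- $\theta$-well-behavedness and $\overline{\bigoplus_i C_i^{\oplus m_i}}\subseteq C$ --- needed to invoke Theorem~\ref{theta-stable-decomp-thm}. For a general tame algebra none of these is known: without normality of $C$ one only learns that the normalization of $\M(C)^{ss}_\theta$ is $\PP^1$, and a priori the quotient itself could be a cuspidal rational curve. Excluding this for arbitrary tame $A$ appears to require either a classification of the singularities of module varieties of tame algebras, or a substitute for normality along the lines of the embedding trick used for triangular string algebras in Corollary~\ref{cor:string-moduli} (there $C$ sits inside a normal component $C'$ of an auxiliary colored gentle algebra and the two moduli spaces are compared directly), but no canonical such ``normal cover'' is available in general. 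A uniform proof therefore hinges on a structural understanding of normality and of $\theta$-stable decompositions of indecomposable components of module varieties over tame algebras --- exactly what the authors defer to future work, and what I expect to be the crux.
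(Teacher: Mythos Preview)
The statement you are addressing is labelled a \emph{conjecture} in the paper; it is not proved there. The surrounding text says only that Proposition~\ref{prop:tamerationality} and Corollary~\ref{cor:string-moduli} ``offer evidence'' for it, and the subsequent question (whether $\dim\M(C)^{ss}_\theta=1$ forces $C^s_\theta\neq\emptyset$) is posed precisely because a positive answer, together with Corollary~\ref{cor:string-moduli}, would settle the conjecture for triangular string algebras. So there is no proof in the paper to compare your proposal against.

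That said, your analysis is accurate and well-organized. The dimension bound and the rationality argument via L\"uroth are correct and reproduce what the paper establishes in Proposition~\ref{prop:tamerationality}(1)--(2) (your unirationality argument in fact slightly improves on (2) by not requiring $C^s_\theta\neq\emptyset$ for rationality, since you only use dominance of $\lambda\mapsto\pi(f(\lambda))$ rather than injectivity). You have also correctly isolated the genuine obstruction: normality of $C$ is unknown for arbitrary tame algebras, and without it the quotient could a priori be a singular rational curve. Your fallback via the $\theta$-stable decomposition is the same mechanism the paper uses in the proof of Theorem~\ref{mainthm-1}(2), and you rightly flag that its hypotheses ($\theta$-well-behavedness, the containment $\overline{\bigoplus_i C_i^{\oplus m_i}}\subseteq C$, and implicitly the distinctness condition $\dd_i\neq\dd_j$ in Theorem~\ref{theta-stable-decomp-thm}) are not available in general.

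In short: your write-up is not a proof of the conjecture, nor does it claim to be; it is a correct diagnosis of exactly where the difficulty lies, and it matches the paper's own assessment.
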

   
Notice that Proposition \ref{prop:tamerationality} and Corollary \ref{cor:string-moduli} offer evidence for this conjecture. In fact, a proof of it for triangular string algebras would follow from Corollary \ref{cor:string-moduli} and a positive resolution to the following question:

\begin{question} Let $A$ be a tame algebra, $\dd$ a generic root, and $C \subseteq \module(A,\dd)$ an indecomposable irreducible component. If $\dim M(C)^{ss}_{\theta} =1$, is it true that $C^s_{\theta} \neq \emptyset$?
\end{question}

This question has a positive answer when $A$ is a tame quasi-tilted algebra, or a triangular gentle algebra, or the butterfly string algebra from Example \ref{ex:pathologies}.

\vspace{25pt}
The birational analogue of Conjecture \ref{conj:Weyman} would be the following, suggested by the second author.

\begin{conjecture}\label{conj:Chindris}
Let $(Q, I)$ be a bound quiver, and $A=kQ/I$ its bound quiver algebra. 
\begin{itemize}
\item [(a)] The following are equivalent:
\begin{enumerate}
\item $A$ is of tame representation type;
\item for every irreducible indecomposable component $C\subset \module(A, \dd)$, $k(C)^{\GL(\dd)}$ is a rational field of transcendence degree at most one, that is $k$ or $k(t)$;
\end{enumerate}

\item [(b)] Assume that $A$ is a tame algebra. Then, for any irreducible component $C \subseteq \module(A,\dd)$, $k(C)^{\GL(\dd)}$ is a rational field over $k$.
\end{itemize}
\end{conjecture}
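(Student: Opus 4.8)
This is already within reach of the machinery above. Let $A$ be tame and $C\subseteq\module(A,\dd)$ indecomposable; then $\dd$ is a generic root and, as recalled in Section~\ref{sect:modvar}, either $C=\overline{\GL(\dd)M}$ is an orbit closure (whence $k(C)^{\GL(\dd)}=k$) or $C=\overline{\bigcup_{\lambda\in\mathcal U}\GL(\dd)f(\lambda)}$ for a parameterizing pair $(\mathcal U\subseteq\mathbb A^1,f)$. In the second case the transcendence-degree formula $\trdeg k(C)^{\GL(\dd)}=\dim C-\dim\GL(\dd)+\min_{x\in C}\dim\Stab_{\GL(\dd)}(x)$ combined with Lemma~\ref{dim-count-irr-comp} gives $\trdeg k(C)^{\GL(\dd)}\in\{0,1\}$, and the value $0$ forces the field to equal $k$ (a finitely generated algebraic extension of an algebraically closed field). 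When the value is $1$, the composite $\mathcal U\xrightarrow{f}C\dashrightarrow C/\GL(\dd)$ is dominant, since the action morphism $\GL(\dd)\times\mathcal U\to C$ is dominant and becomes $\GL(\dd)$-invariant after composing with $\pi$; hence $C/\GL(\dd)$ is a one-dimensional unirational variety and is rational by L\"uroth's theorem, so $k(C)^{\GL(\dd)}\cong k(t)$. (This removes the Schur hypothesis of Proposition~\ref{prop:tamerationality}(3).)

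\textbf{Part $(b)$.} Let $A$ be tame and $C\subseteq\module(A,\dd)$ arbitrary. Arguing as in the proof of Proposition~\ref{reduction-rational-inv-prop} (Krull--Schmidt together with Lemma~\ref{lemma-orbit-rational-inv}), discard the orbit-closure summands and assume $C=\overline{\bigoplus_{i=1}^n C_i^{\oplus m_i}}$ with each $C_i\subseteq\module(A,\dd_i)$ indecomposable and not an orbit closure; then $\trdeg k(C_i)^{\GL(\dd_i)}=1$ by Lemma~\ref{dim-count-irr-comp} and $k(C_i)^{\GL(\dd_i)}\cong k(t)$ by the previous step. If the $\dd_i$ are pairwise distinct, Proposition~\ref{reduction-rational-inv-prop} yields
\[
k(C)^{\GL(\dd)}\;\cong\;\qq\Big(\bigotimes_{i=1}^n S^{m_i}\big(k(t)/k\big)\Big),
\]
and since $S^{m}(k(t)/k)$ is the field of $S_m$-symmetric rational functions in $m$ indeterminates, i.e.\ $k(e_1,\dots,e_m)$, each tensor factor is purely transcendental and hence so is the fraction field. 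What remains is the case in which two non-rigid summands $C_i\neq C_j$ have equal dimension vectors: for triangular gentle algebras this is excluded by the uniqueness of regular components (Corollary~\ref{cor:uniqueness}), but for a general tame algebra one must either rule it out or strengthen Proposition~\ref{reduction-rational-inv-prop} to separate such components --- presumably by producing generalized Schofield semi-invariants that distinguish them and then running a $\theta$-well-behavedness argument in the spirit of Theorem~\ref{theta-stable-decomp-thm}. I expect this separation problem to be the main obstacle for part~(b).

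\textbf{The implication $(a)(2)\Rightarrow(a)(1)$.} This is the genuinely open direction, to be attacked by contraposition: from $A$ wild, exhibit an indecomposable component $C$ with $k(C)^{\GL(\dd)}$ non-rational or of transcendence degree $\geq2$. The model situation lives over the wild $3$-Kronecker quiver $K_3$: for $\dd=(1,1)$ the entire module variety $\module(kK_3,\dd)\cong\mathbb A^3$ is an indecomposable component (every nonzero representation is a brick) and $k(\mathbb A^3)^{\GL(\dd)}\cong k(\PP^2)$ has transcendence degree $2$. By the Tame--Wild Dichotomy \cite{Dro} there is a representation embedding $F\colon\rep(K_3)\to\module(A)$, and the plan is to transport this obstruction forward: for a suitable dimension vector $\ee$ one wants the closure of $F(\module(kK_3,\dd))$ to sit, compatibly with the group actions, inside an indecomposable component $C\subseteq\module(A,\ee)$ whose field of rational invariants contains $k(\PP^2)$. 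The obstacle is precisely that a representation embedding is only a functor: one must promote it to a $\GL$-equivariant rational map of module varieties and, more delicately, certify that its image is not absorbed into a smaller irreducible component. Where the analogous step for the moduli-space version $(2)\Rightarrow(1)$ of Conjecture~\ref{conj:Weyman} has been carried out --- for quasi-tilted and strongly simply connected algebras in \cite{CC10} --- it proceeds by reduction to a finite list of minimal wild configurations, and repeating that reduction with rational-invariant bookkeeping should settle $(a)(2)\Rightarrow(a)(1)$ for those classes; the general case is likely to remain as inaccessible as the classical wild rationality problem itself.
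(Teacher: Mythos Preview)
This statement is a \emph{conjecture} in the paper, not a theorem; there is no proof in the paper to compare your proposal against. The paper only records partial evidence: verification for quasi-tilted algebras \cite{CC9,CC10}, for triangular gentle algebras via Theorem~\ref{mainthm-1}, and of $(2)\Rightarrow(1)$ for strongly simply connected algebras \cite{CC10}.

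That said, your argument for $(a)(1)\Rightarrow(a)(2)$ is complete and correct, and it is sharper than anything the paper actually proves in this direction. Proposition~\ref{prop:tamerationality}(3) only obtains $k(C)^{\GL(\dd)}\cong k(t)$ under the additional hypothesis that $C$ is a Schur component, by producing a $\theta^M$-stable point and invoking Remark~\ref{rmk-fine-moduli-rational-inv}. Your L\"uroth route---observing that the dominant action map $\GL(\dd)\times\mathcal U\to C$ becomes, after composing with the rational quotient $\pi$, a dominant map factoring through $\mathcal U$, so that $C/\GL(\dd)$ is a unirational curve---dispenses with the Schur assumption entirely and handles every non-orbit-closure indecomposable component. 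This is a genuine improvement, and it does settle one implication of part~(a) in full generality.

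For part~(b) and for $(a)(2)\Rightarrow(a)(1)$ you are not giving proofs but correctly locating the obstructions: the failure of Proposition~\ref{reduction-rational-inv-prop} when two non-rigid indecomposable summands share a dimension vector, and the difficulty of promoting a representation embedding from a wild algebra to an equivariant geometric statement about irreducible components. These are precisely why the paper leaves Conjecture~\ref{conj:Chindris} open, so your assessment of what remains to be done is accurate.
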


Note that condition $(2)$ of the conjecture above simply says that for an indecomposable irreducible component $C \subseteq \module(A,\dd)$ the rational quotient $C/\GL(\dd)$ is either a point or just $\mathbb P^1$, and this is very much in sync with the philosophy behind the tameness of an algebra. 

Conjecture \ref{conj:Chindris} has been verified for the class of quasi-tilted algebras (see \cite{CC9,CC10}). Moreover, our Theorem \ref{mainthm-1} is a verification of Conjecture \ref{conj:Chindris} for the class of triangular gentle algebras. 

The implication $(2) \Longrightarrow (1)$ has also been verified for the class of strongly simply connected algebras (see \cite{CC10}). We plan to use the methods of this paper to settle this conjecture for the class of strongly simply connected algebras.

A separate and still difficult problem in general is the determination of a transcendental basis of $k(C)^{\GL(\dd)}$. This was achieved by Ringel in the tame hereditary case \cite{R4}, but in general such a basis is unknown. Our Theorem \ref{mainthm-2} gives such a basis in the case when $A$ is a triangular gentle algebra and $C$ is an irreducible regular component.

\bibliography{biblio}\label{biblio-sec}
\end{document}